\newcommand{\norm}[1]{\left\lVert#1\right\rVert}
\newcommand{\innOm}[2]{(#1,#2)_\Omega}
\newcommand{\innGa}[2]{\langle #1,#2\rangle_\Gamma}
\newcommand{\duaOm}[2]{[#1,#2]_\Omega}
\newcommand{\duaGa}[2]{[#1,#2]_\Gamma}
\numberwithin{equation}{section}
\theoremstyle{definition}
\newtheorem{theorem}{Theorem}[section]
\newtheorem{lemma}{Lemma}[section]
\newtheorem{corollary}{Corollary}[section]
\newtheorem{definition}{Definition}[section]
\newtheorem{remark}{Remark}[section]
\newtheorem{example}{Example}[section]
\begin{document}

\title{Analysis of a hybridizable discontinuous Galerkin scheme for the tangential control of the Stokes system}

\author{Wei Gong}\address{NCMIS \& LSEC, Institute of Computational Mathematics, Academy of Mathematics and Systems Science, Chinese Academy of Sciences, Beijing 100190, China. \email{wgong@lsec.cc.ac.cn}. W.~Gong was supported by the National Natural Science Foundation of China under grants 11671391 and 91530204, and the National Key Basic Research Program (2018YFB0704304).}

\author{Weiwei Hu}\address{Department of Mathematics, Univeristy of Georgia, Athens, GA. \email{Weiwei.Hu@uga.edu}. W. Hu was partially supported by the NSF grant DMS-1813570.}

\author{Mariano Mateos}\address{Dpto. de Matem\'aticas. Universidad de Oviedo,
Campus de Gij\'on, Spain. \email{mmateos@uniovi.es}. M.~Mateos was supported by the Spanish Ministerio de Econom\'{\i}ay Competitividad under projects MTM2014-57531-P and MTM2017-83185-P.}

\author{John~R.~Singler}\address{Department of Mathematics
 	and Statistics, Missouri University of Science and Technology,
 	Rolla, MO, USA. \email{singlerj@mst.edu}.  J.~Singler was supported in part by National Science Foundation grant DMS-1217122.}

 \author{Yangwen Zhang}\address{Department of Mathematics Science, University of Delaware, Newark, DE, USA. \email{ywzhangf@udel.edu}. Yangwen Zhang was supported by the US National Science Foundation (NSF) under
 	grants DMS-1619904 and DMS-1217122.}

%
%
\begin{abstract}
	We consider an unconstrained tangential Dirichlet boundary control problem for the Stokes equations with an $ L^2 $ penalty on the boundary control.  The contribution of this paper is twofold.  First, we obtain well-posedness and regularity results for the tangential Dirichlet control problem on a convex polygonal domain.  The analysis contains new features not found in similar Dirichlet control problems for the Poisson equation; an interesting result is that the optimal control has higher local regularity on the individual edges of the domain compared to the global regularity on the entire boundary.  Second, we propose and analyze a hybridizable discontinuous Galerkin (HDG) method to approximate the solution.  For convex polygonal domains, our theoretical convergence rate for the control is optimal with respect to the global regularity on the entire boundary.  We present numerical experiments to demonstrate the performance of the HDG method.
 \end{abstract}
\subjclass[Mathematics Subject Classification]{49J20 \and 65N30}
%
\keywords{Tangential Dirichlet boundary control; Stokes equations;  hybridizable discontinuous Galerkin method.}
\maketitle

	\section{Introduction}
Let $\Omega\subset \mathbb{R}^2$ be a convex domain with a polygonal boundary $\Gamma = \partial \Omega$.  For a given target state $ \bm y_d $, we consider the following unconstrained Dirichlet boundary control problem for the Stokes equations:
\begin{align}\label{Ori_problem1}
\min\limits_{\bm u\in \bm U} J(\bm u),  \quad  J(\bm u) := \frac{1}{2}\|\bm y_{\bm u}-\bm y_{d}\|^2_{\bm L^{2}(\Omega)}+\frac{\gamma}{2}\| \bm u\|^2_{\bm U},
\end{align}
where $\bm y_{\bm u}$ is the solution, in a sense to be defined later, of
\begin{equation}\label{Ori_problem2}
\begin{split}
-\Delta\bm y+\nabla p &=\bm f \quad  \text{in}\ \Omega,\\
\nabla\cdot\bm y&=0 \quad\  \text{in}\ \Omega,\\
\bm y&=\bm u \quad\  \text{on}\  \Gamma,\\
\int_{\Omega} p&=0,
\end{split}
\end{equation}
$\bm U \subset \bm L^2(\Gamma)$ is the control space and $\gamma>0$ is a fixed constant.

Control of fluid flows modeled by the Stokes or Navier-Stokes equations is an important and active area of interest.  After the pioneering works by Glowinski and Lions \cite{MR1352473} and Gunzburger \cite{MR1946726,MR1632420,MR1613873,MR1759904,MR1720145,MR1135991,MR1145711}, many important developments have been made both theoretically and computationally in the past decades. For an extensive body of literature devoted to this subject we refer to, e.g., \cite{MR1613897,MR1871460,MR2326293,MR2974748,MR3712009,CasasChrysafinos15,BallarinManzoniRozzaSalsa14,BochevGunzburger06,FouresCaulfieldSchmid14,YEfendiev_TYHou_book_1,YanKeyes15,Pearson15,Ravindran17,YangCai17,MR2591040,MR2338434} and the references therein.  Despite the large amount of existing work on numerical methods for fluid flow control problems,  we are not aware of any contributions to the analysis and approximation of the tangential Stokes Dirichlet boundary control problem.  Work on this problem is an essential step towards the analysis and approximation of similar Dirichlet boundary control problems for the Navier-Stokes equations and other fluid flow models.


In this work, we focus on the case where the control acts tangentially along the boundary through a Dirichlet boundary condition.  This scenario has broad applications to optimal mixing and heat transfer problems.  Omari and Guer in \cite{el2010alternate} conducted a numerical study of  the effect of wall rotation on the enhancement
of heat transport in the whole fluid domain. Gouillart et al.\ in \cite{GDDRT, GKDDRT,GTD, TGD} studied in detail this crucial effect
of moving wall on the mixing efficiency for the homogenization of concentration in
a 2D closed flow environment.  These problems naturally lead to the study of tangential  boundary control and optimization of fluid flows. Recently, Hu and Wu in \cite{hu2017boundary, hu2017an, hu2017enhancement, HuWu2018boundaryNS} provided  rigorous  mathematical  approaches for optimal mixing and heat transfer via an active control of Stokes and Navier-Stokes flows through  Navier slip boundary conditions.  Other tangential boundary control problems for fluid flows have been considered by  Barbu, Lasiecka and Triggiani \cite{MR2218543,MR2215059,MR3348933,MR3295779} and Osses \cite{MR1871454}.  However, the authors are not aware of any existing work on approximation and numerical analysis for these problems.

%

Discontinuous Galerkin (DG) methods are widely used for fluid flow problems since they can capture shocks and large gradients in solutions. However, most existing DG methods are commonly considered to have a major drawback: the memory requirement and computational cost of DG methods are typically much larger than the standard finite element method.

Hybridizable discontinuous Galerkin (HDG) methods were proposed by Cockburn et al.\ in \cite{MR2485455} as an improvement of traditional DG methods. The HDG methods are based on a mixed formulation and utilize a {\it numerical flux} and a {\it numerical trace} to approximate the flux and the trace of the solution.  The approximate flux and solution variables  can be  eliminated element-by-element. This process leads to a global equation for the approximate boundary traces only. As a result,  HDG methods have significantly less globally coupled unknowns, memory requirement, and computational cost compared to other DG methods.  Furthermore, HDG methods have been successfully applied to flow problems \cite{MR3626531,MR3556409,MR2772094,MR3194122,MR2753354,MR3432358}, distributed optimal control problems \cite{MR3508834,HuShenSinglerZhangZheng_HDG_Dirichlet_control2,HuShenSinglerZhangZheng_HDG_Dirichlet_control3}, and Dirichlet boundary control problems \cite{HuShenSinglerZhangZheng_HDG_Dirichlet_control1,HuMateosSinglerZhangZhang1,MR3831243}.

For the Stokes tangential Dirichlet boundary control problem considered here, the Dirichlet boundary data $ \bm u \in \bm L^2(\Gamma) $ takes the form $ \bm u = u \bm \tau $, where $ u $ is the control and $ \bm \tau $ is the unit tangential vector to the boundary.  Formally, the optimal control $u\in L^2(\Gamma)$ and the optimal state $\bm y \in  \bm L^2(\Omega)$ minimizing the cost functional satisfy the optimality system
\begin{subequations}\label{optimality_system}
	\begin{align}
	-\Delta \bm y + \nabla p &= \bm f \qquad \qquad \textup{in} \ \Omega,\label{optimality_system1}\\
	\nabla\cdot\bm y &= 0\qquad \qquad \ \textup{in} \ \Omega,\label{optimality_system2}\\
	\bm y &= u\bm \tau \qquad \quad\  \ \textup{on} \ \Gamma,\label{optimality_system3}\\
	-\Delta \bm z -\nabla q &= \bm y - \bm y_d \quad \quad \textup{in} \ \Omega,\label{optimality_system4}\\
	\nabla\cdot\bm z &= 0\qquad \qquad \   \textup{in} \ \Omega,\label{optimality_system5}\\
	\bm z &= 0  \qquad \quad\ \ \   \ \textup{on} \ \Gamma,\label{optimality_system6}\\
	\partial_{\bm n}  \bm z &= \gamma  u\bm\tau \qquad\quad \textup{on} \  \Gamma.\label{optimality_system7}
	\end{align}
\end{subequations}


We use an HDG method to approximate the solution of a mixed formulation of this optimality system.  To do this, we first analyze the control problem in \Cref{Analysis_D_S}.  We give precise meaning to the state equation \eqref{optimality_system1} for Dirichlet boundary data in $ \bm L^2(\Gamma)$, and prove well-posedness and regularity results for the optimality system \eqref{optimality_system}.  The theoretical results for this problem share some similarities to results for Dirichlet boundary control of the Poisson equation on a 2D convex polygonal domain \cite{MR3432846}; however, there are new components to the analysis due to the mixed formulation and the regularity results for Stokes equations on polygonal domains \cite{MR977489}.  An interesting feature of our theoretical results is that the optimal control has higher local regularity (on each boundary edge) than global regularity (on the entire boundary $ \Gamma $).  This higher local regularity for the optimal control is not present for Dirichlet boundary control of the Poisson equation; furthermore, as we discuss below, this phenomenon may have an effect on the convergence rates of the approximate solution.

For the HDG method, we use polynomials of degree $k + 1$ to approximate the velocity $\bm y$ and dual velocity $\bm z$, and polynomials of degree $k \ge 0$ for the fluxes $\mathbb L = \nabla \bm y$ and $\mathbb G = \nabla \bm z$, pressure $p$ and dual pressure $q$. Moreover, we also use polynomials of degree $k$ to approximate the numerical trace of the velocity and dual velocity on the edges of the spatial mesh, which are the only globally coupled unknowns.  We describe the HDG method in \Cref{sec:HDG_formulation} and its implementation can be found in the arXiv preprint of this paper \cite{GongHuMateosSinglerZhang1}.

In \Cref{sec:analysis}, we prove convergence results for the HDG method.  Under certain assumptions on the largest angle of the convex polygonal domain and the smoothness of the desired state $ \bm y_d $, we prove the control converges at a superlinear rate.    Similar superlinear convergence results for Dirichlet boundary control of the Poisson equation have been obtained in \cite{ApelMateosPfeffererRosch17,MR2272157,MR2806572,HuShenSinglerZhangZheng_HDG_Dirichlet_control1,HuMateosSinglerZhangZhang1,MR3831243}.  To give a specific example of our results, for a rectangular domain, $\bm y_d \in \bm H^2(\Omega)$, and $ k = 1 $, we obtain the following a priori error bounds for the velocity $\bm y$, adjoint velocity $\bm z$, their fluxes $\mathbb L$ and $\mathbb G$, pressure $p$ and dual pressure $q$ and  the optimal control $u$:
\begin{align*}
&\|\bm y - \bm  y_h\|_{0,\Omega} =  O(h^{3/2-\varepsilon}), &  &\|\mathbb L - \mathbb L_h\|_{0,\Omega} =  O(h^{1-\varepsilon}),\\
&\|\bm z - \bm  z_h\|_{0,\Omega} =  O(h^{3/2-\varepsilon}), &  &\|\mathbb G - \mathbb G_h\|_{0,\Omega} =  O(h^{3/2-\varepsilon}),\\
&\|p - p_h\|_{0,\Omega} =O(h^{1-\varepsilon}), &  &\|q - q_h\|_{0,\Omega} =O(h^{3/2-\varepsilon}),
\end{align*}
and
\begin{align*}
\|u-  u_h\|_{0,\Gamma} =  O(h^{3/2-\varepsilon}),
\end{align*}
for any $\varepsilon>0$. The rate of convergence for the control $u$ is optimal in the sense of the maximal global regularity of the control $u\in H^{3/2-\varepsilon}(\Gamma)$.  However, the numerical results presented in \Cref{sec:numerics} show higher convergence rates than the rates predicted by our numerical analysis; we discuss this phenomenon in more detail in \Cref{sec:numerics}.  The numerical convergence rates observed here are different than typical numerical results for Dirichlet boundary control of the Poisson equation.

We emphasize that the HDG method in this work is usually considered to be a \emph{superconvergent} method.  Specifically, if polynomials of degree $k\ge 1$ are used for the numerical trace and the solution of the PDEs is smooth enough, then $O(h^{k+2})$ error estimates can be obtained for the state variable; see, e.g., \cite{MR3440284,MR3556409,HuShenSinglerZhangZheng_HDG_Dirichlet_control3}. Hence, from the viewpoint of globally coupled degrees of freedom, this method achieves superconvergence for the scalar variable. For Dirichlet boundary control problems, to obtain the \emph{superlinear} convergence rate, one usually needs a superconvergence mesh or higher order elements for the standard finite element method, see, e.g., \cite{ApelMateosPfeffererRosch17,MR2558321}.  However, the HDG method considered here achieves the superlinear convergence rate without any special considerations.

\section{ Analysis of the Tangential Dirichlet Control Problem}
\label{Analysis_D_S}
To begin, we set notation and prove some fundamental results concerning the optimality system for the control problem.  In this section, we assume $ \Omega $ is a convex polygonal domain and the forcing $ \bm f $ in the Stokes equations \eqref{Ori_problem2} is equal to zero; if the forcing is nonzero, then it can be eliminated using the technique in \cite[p.\ 3623]{MR3432846}.

Throughout the paper, we use the standard notation  $H^{m}(\Omega)$ to denote the Sobolev space with norm $\|\cdot\|_{m,\Omega}$ and seminorm $|\cdot|_{m,\Omega}$. Set $\mathbb H^m(\Omega) = [H^{m}(\Omega)]^{2\times 2}$, $\bm H^{m}(\Omega) = [H^{m}(\Omega)]^2$ and $\bm H_0^1(\Omega) =\{\bm v\in \bm H^1(\Omega) : \bm v = 0 \ \textup{on} \ \Gamma \}$. We denote the $L^2$-inner products on $ \mathbb L^2(\Omega) $,   $\bm L^2(\Omega)$, $ L^2(\Omega)$ and $ \bm L^2(\Gamma)$ by
\begin{align*}
(\mathbb L,\mathbb G)_{\Omega} &= \sum_{i,j=1}^2 \int_{\Omega}  L_{ij} G_{ij}, \qquad (\bm y,\bm z)_{\Omega} = \sum_{j=1}^2 \int_{\Omega}  y_j z_j,\\
(p,q)_{\Omega} &=  \int_{\Omega} pq,\qquad \qquad\qquad\langle \bm y,\bm z\rangle_{\Gamma} = \sum_{j=1}^2\int_{\Gamma}  y_j  z_j.
\end{align*}
Define the space $\mathbb H(\text{div};\Omega)$ as
\begin{align*}
\mathbb H(\text{div},\Omega) = \{\mathbb K \in \mathbb L^2(\Omega) : \nabla\cdot \mathbb K \in \bm L^2(\Omega)\}.
\end{align*}
Also, we define $ L_0^2(\Omega)$ as
\begin{align*}
L_0^2(\Omega) &= \left\{p \in L^2(\Omega) : (p,1)_{\Omega} = 0\right\}.
\end{align*}
Let $\innGa{\cdot}{\cdot}$ denote the inner product in $L^2(\Gamma)$ and let $\duaGa{\cdot}{\cdot}$ denote the duality product between $H^{-s}(\Gamma)$ and $H^{s}(\Gamma)$ for $0\leq s <3/2$,
where $H^{s}(\Gamma)$ denotes the space of traces of $H^{s+1/2}(\Omega)$ for $0<s<3/2$. (For $1/2\leq s<3/2$ it is the subspace of $\Pi_{i=1}^m H^{s}(\Gamma_i)$ satisfying certain compatibility conditions on the corners; see \cite[Theorem 1.5.2.8]{MR775683}. For $s=3/2$, this definition would lead to ambiguities.)
Following \cite[Section 2.1]{MR2371113} we introduce  the spaces
\begin{align*}
\bm V^s(\Omega) &= \{\bm y\in \bm H^s(\Omega) : \nabla \cdot \bm y = 0, \  \duaGa{\bm y\cdot\bm n}{1}=0\},\mbox{ for }s\geq 0,\\
\bm V^s_0(\Omega) &= \{\bm y\in \bm H^s(\Omega) : \nabla \cdot  \bm y = 0, \  \bm y=0\mbox{ on }\Gamma\},\mbox{ for }s>1/2,\\
\bm V^s(\Gamma) &=  \{\bm u\in \bm H^s(\Gamma) : \innGa{\bm u\cdot\bm n}{1} = 0\},\mbox{ for }0\leq s<3/2.
\end{align*}
For $-3/2<s<0$, $\bm V^{s}(\Gamma)$ is the dual space of $\bm V^{-s}(\Gamma)$. For $s<-1/2$, $\bm V^{s}(\Omega)$ is the dual space of $\bm V^{-s}_0(\Omega)$ and for $-1/2\leq s<0$, $\bm V^{s}(\Omega)$ is the dual space of $\bm V^{-s}(\Omega)$.

Consider a target state $\bm y_d\in \bm H$, where $\bm H\hookrightarrow \bm V^0(\Omega)$ is a function space
that will be specified later, and a Tykhonov regularization parameter $\gamma>0$. Consider also a space $\bm U\hookrightarrow \bm V^0(\Gamma)$.
We are interested in the optimal control problem
\begin{equation}\label{P}\tag{P}
\min_{\bm u\in \bm U} J(\bm u)=\frac{1}{2}\|\bm y_{\bm u}-\bm y_d\|^2_{\bm H} + \frac{\gamma}{2}\|\bm u\|_{\bm U}^2,
\end{equation}
where $\bm y_{\bm u}\in \bm  V^0(\Omega)$ is the unique solution in the transposition sense of the Stokes system (see \Cref{D2.1} below)
\begin{equation}\label{StateEquation}
\begin{split}
-\Delta\bm y+\nabla p &=0 \quad  \text{in}\ \Omega,\\
\nabla\cdot\bm y&=0 \quad  \text{in}\ \Omega,\\
\bm y&=\bm u \quad  \text{on}\  \Gamma,\\
(p,1)_{\Omega}&=0.
\end{split}
\end{equation}



Different choices of the spaces $\bm H$ and $\bm U$ appear in the related literature for Dirichlet control of Stokes and Navier-Stokes equations. In the early reference \cite{MR1135991}, $\bm H=\bm L^4(\Omega)$ and $\bm U = \bm V^1(\Gamma)$. The natural space for the controls to obtain a variational solution of
the state equation \eqref{StateEquation} is $\bm  V^{1/2}(\Gamma)$. This is the choice in \cite{MR2154110}. In that work, nevertheless, the Tykhonov regularization is done in the norm of $\bm L^2(\Gamma)$. To prove existence of solution, the tracking is done in the space $\bm H = \bm V^1(\Omega)$. In the reference \cite{MR2591040},
the authors work in a smooth domain with $\bm H= \bm V^0(\Omega)$ and $\bm U=\bm V^0(\Gamma)$. This choice involves a harder analysis, but leads to an optimality system easier to handle. In polygonal domains, this approach leads to optimal controls that are \emph{discontinuous} at the corners.

We assume throughout this work that the tracking term for the state is measured in the $\bm L^2(\Omega)$ norm.  We investigate the case $\bm U = \{u\bm\tau : u\in L^2(\Gamma)\}$, which corresponds to tangential boundary control; see \cite{MR2218543, MR2215059}. We first precisely define the concept of solution for Dirichlet data in $\bm V^0(\Gamma)$, prove precise regularity results, and use them to introduce a mixed formulation of the problem adequate for HDG methods.

\subsection{Regularity results}\label{S2}

The definition of very weak solution for data in $\bm V^0(\Gamma)$ was introduced in \cite[Appendix A]{MR884813} and is valid in convex polygonal domains; see also \cite{MR1739399} and \cite[Definition 2.1]{MR2591040} for a similar definition for the Navier-Stokes equations and smooth domains. It is worthwhile to mention that the first numerical analysis work  for a  Dirichlet problem with irregular
boundary datum by  using a very weak formulation was given in \cite{MR2084239}. Also in smooth domains, very
weak solutions can be defined for data in $\bm V^{-1/2}(\Gamma)$; see \cite[Appendix A]{MR2371113}. We will  prove that the optimal regularity $\bm V^{s+1/2}(\Omega)$ expected for the solution can be achieved. In \cite{MR1663460}, only suboptimal regularity $\bm V^{s+1/2-\varepsilon}(\Omega)$ for all $\varepsilon>0$ is proved. We obtain a result comparable to the one given in \cite[Appendix A]{MR2371113} for smooth domains.

Let $\omega$ denote the greatest interior angle of $\Gamma$.  Following \cite[Theorem 5.5]{MR977489}, we know there exists a number $\xi = \xi(\omega) \in(0.5,4]$ that gives the maximal $\bm H^s(\Omega)$ regularity for the problem \eqref{StokesRegularity}.
This means for very smooth $\bm f$ and $h$ satisfying the compatibility condition $\innOm{h}{1}=0$ we can only expect that the variational solution $(\bm z_{\bm f,h},q_{\bm f,h})\in \bm H^1_0(\Omega) \times L^2_0(\Omega)$ of the \emph{compressible} Stokes problem
%
\begin{equation}\label{StokesRegularity}
\begin{split}
-\Delta\bm z+\nabla q &=\bm f \quad  \text{in}\ \Omega,\\
\nabla\cdot\bm z&=h \quad  \text{in}\ \Omega,\\
\bm z&=0 \quad  \text{on}\  \Gamma,\\
(q,1)_{\Omega}&=0,
\end{split}
\end{equation}
satisfies $\bm z\in \bm H^{3/2+s}(\Omega)$ and $q\in H^{1/2+s}(\Omega)$ for $s<\xi-1/2$. This singular exponent $\xi$ is the smallest real part of all of the roots $ \lambda $ of the equation
\begin{align}\label{singular_ex}
\frac{ \sin^2(\lambda\omega)-\lambda^2\sin^2\omega }{ \lambda^2 (\lambda - 1) }=0,
\end{align}
and satisfies that $\omega\mapsto\xi$ is strictly decreasing, $\xi >\pi/\omega$ if $\omega<\pi$, and $0.5<\xi<\pi/\omega$ if $\omega > \pi$.

Let us denote
\[s^*  =\min\{\xi-1/2,1/2\}.\]
If $\bm f\in \bm L^2(\Omega)$ and $h\in H^1(\Omega)$ such that $\innOm{h}{1}=0$, \cite [Theorem 5.5(a)]{MR977489} states that for all \[0<s<s^*=\min\{\xi-1/2,1/2\}\] the solution of \eqref{StokesRegularity} satisfies $\bm z_{\bm f,h}\in \bm H^{3/2+s}(\Omega)$ and $q_{\bm f,h}\in H^{1/2+s}(\Omega)$.
Moreover, we have that
\begin{equation}\label{Continuity}
\|\bm z_{\bm f,h}\|_{\bm H^{3/2+s}(\Omega)}+\|q_{\bm f,h}\|_{H^{1/2+s}(\Omega)/\mathbb{R}} \leq C \big(\|\bm f\|_{\bm H^{s-1/2}(\Omega)} + \|h\|_{H^{s+1/2}(\Omega)/\mathbb R}\big).
\end{equation}
Notice that although the pressure is uniquely determined as a function with the condition $\innOm{q}{1}=0$, the norm must be taken modulo constant functions.
\begin{remark}
	Another remarkable fact is that this result holds for $s<1/2$. This means, in particular, that in convex domains one cannot expect in general to have $\bm H^2(\Omega)$ regularity of $\bm z$.
	To obtain this $\bm H^2(\Omega)$ regularity, an additional condition must be made on the divergence of $\bm z$. If, e.g., $h\in H^1_0(\Omega)$, $\innOm{h}{1}=0$, then it follows from \cite[Theorem 5.5(c)]{MR977489} or the early reference \cite{MR0404849} that the result also holds for $s=s^*$. In particular,  under these assumptions, it is obtained that  $\bm z\in\bm H^2(\Omega)$ in convex domains.
	This fact was used both in \cite{MR884813} and in \cite{MR1663460} to define very weak solutions in convex polygonal domains using $h\in H^1_0(\Omega)$ as a test function. Although the approach works to define the transposition solution, it leads only to suboptimal regularity results for the solution of the Dirichlet problem.
\end{remark}

For later reference, we state the regularity result for the case $h\equiv 0$.
\begin{theorem}\label{Dauge55b}{\cite[Theorem 5.5(b)]{MR977489}}
	Suppose $\bm f \in \bm H^{t-1}(\Omega)$ for some $1\leq t<\xi$. Then, the unique solution of the incompressible Stokes problem
	\begin{equation}\label{StokesRegularityIncompressible}
	\begin{split}
	-\Delta\bm z+\nabla q &=\bm f \quad  \text{in}\ \Omega,\\
	\nabla\cdot\bm z&=0 \quad  \text{in}\ \Omega,\\
	\bm z&=0 \quad  \text{on}\  \Gamma,\\
	(q,1)_{\Omega}&=0.
	\end{split}
	\end{equation}
	satisfies $\bm z\in\bm H^{t+1}(\Omega)$, $q\in H^t(\Omega)$ and
	\[
	\|\bm z\|_{\bm H^{1+t}(\Omega)}+\|q\|_{H^{t}(\Omega)/\mathbb{R}} \leq C \|\bm f\|_{\bm H^{t-1}(\Omega)}.
	\]
\end{theorem}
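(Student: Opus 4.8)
The plan is to obtain the stated regularity result as the incompressible specialization of the Kondrat'ev--Maz'ya--Dauge corner-regularity theory for the Stokes system on a plane domain with angles, proceeding in four stages. First I would fix the foundation: the bilinear form $(\nabla\bm z,\nabla\bm v)_\Omega$ is coercive on $\bm H_0^1(\Omega)$ and the divergence satisfies the Babu\v{s}ka--Brezzi inf-sup condition on $\bm H_0^1(\Omega)\times L_0^2(\Omega)$, so \eqref{StokesRegularityIncompressible} has a unique variational solution $(\bm z,q)\in\bm H_0^1(\Omega)\times L_0^2(\Omega)$ depending continuously on $\bm f\in\bm H^{-1}(\Omega)$. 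Since $\Omega$ is convex, every interior angle satisfies $\omega_i<\pi$, so by the stated monotonicity $\xi>\pi/\omega>1$; hence the admissible range $1\le t<\xi$ is nonempty and the target space $\bm H^{t+1}(\Omega)$ is strictly better than this variational baseline.

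Second, I would localize by a partition of unity. Away from the vertices---in the interior and along the open straight edges---\eqref{StokesRegularityIncompressible} is a uniformly elliptic Agmon--Douglis--Nirenberg system with a complementing Dirichlet condition, so the classical shift theorem promotes $\bm f\in\bm H^{t-1}$ to $\bm z\in\bm H^{t+1}$, $q\in H^t$ locally, with no obstruction. It therefore remains to analyze a single plane sector of opening $\omega_i$ near each vertex.

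Third comes the Mellin analysis in the sector, which I expect to be the crux. Writing the Stokes operator in polar coordinates $(r,\theta)$ and taking the Mellin transform in $r$ converts the problem into a holomorphic operator pencil $\mathcal A(\lambda)$ on the angular interval $(0,\omega_i)$ with Dirichlet data at the two sides. Eliminating the pressure reduces $\mathcal A(\lambda)$ to an ODE system whose solvability condition, under the Dirichlet and divergence-free constraints, is exactly the transcendental equation \eqref{singular_ex}; its roots are the singular exponents. The solution then admits the decomposition
\begin{equation*}
(\bm z,q)=(\bm z_{\mathrm{reg}},q_{\mathrm{reg}})+\sum_{\lambda}c_\lambda\,\bm S_\lambda(r,\theta),
\end{equation*}
with $(\bm z_{\mathrm{reg}},q_{\mathrm{reg}})\in\bm H^{t+1}\times H^t$, each profile satisfying $\bm S_\lambda\sim r^\lambda$ (possibly times $\log r$ at resonant exponents) near the vertex, and the sum taken over eigenvalues with $0<\operatorname{Re}\lambda\le t$. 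In two dimensions $r^\lambda\in H^s$ near the vertex precisely when $s<\operatorname{Re}\lambda+1$.

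Finally I would close the argument with convexity. Since $\xi$ is by definition the smallest real part of the roots of \eqref{singular_ex} and $t<\xi$, the strip $0<\operatorname{Re}\lambda\le t$ contains no eigenvalue, so the singular sum is empty and $(\bm z,q)=(\bm z_{\mathrm{reg}},q_{\mathrm{reg}})\in\bm H^{t+1}(\Omega)\times H^t(\Omega)$. Reassembling the interior, edge, and corner bounds through the partition of unity, and applying the closed graph theorem to the linear map $\bm f\mapsto(\bm z,q)$ between $\bm H^{t-1}(\Omega)$ and $\bm H^{t+1}(\Omega)\times\big(H^t(\Omega)/\mathbb R\big)$, delivers the quantitative estimate. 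The hard part is the third stage: the velocity--pressure coupling makes the Stokes pencil more delicate than the scalar Laplace pencil, and one must verify that its spectrum is governed precisely by \eqref{singular_ex} and that convexity forces $\xi>1$, which is exactly what distinguishes this clean incompressible statement from the $s<1/2$ ceiling of the compressible result \eqref{Continuity}.
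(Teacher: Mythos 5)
The paper offers no proof of this statement: \Cref{Dauge55b} is quoted directly from Dauge \cite[Theorem 5.5(b)]{MR977489}, so the ``paper's proof'' is the citation itself. Your outline---variational well-posedness as the baseline, Agmon--Douglis--Nirenberg localization away from the vertices, the Mellin transform in each corner sector producing an operator pencil whose eigenvalues are the roots of \eqref{singular_ex}, and the conclusion that the singular sum is empty because convexity gives $\xi>\pi/\omega>1$ while $t<\xi$ keeps the strip $0<\operatorname{Re}\lambda\le t$ eigenvalue-free---is precisely the Kondrat'ev-type corner analysis on which the cited result rests, so your proposal is correct and takes essentially the same route, with the genuinely hard step (the Stokes pencil computation) correctly identified rather than carried out.
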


Although we will pose our control problem for data in $\bm u\in\bm V^0(\Gamma)$, the precise regularity results for the state equation will follow by interpolation; therefore we need a definition of very weak solution for data in $\bm u\in\bm V^{-s}(\Gamma)$ for $0<s<s^*$. The elements of this space do not always satisfy a condition analogous to $\innGa{\bm u\cdot \bm n}{1}=0$, i.e., we may have $\duaGa{\bm u}{\bm n}\neq 0$, and it is necessary to take this into account to define a solution in the transposition sense. Following \cite[Eq.\ (2.2)]{MR2371113}, we define for $(\bm z,q)\in \bm H^{3/2+s}(\Omega)\times H^{1/2+s}(\Omega)$, $s>0$, the constant
\begin{equation}
\label{Ray2.2}c(\bm z,q) = \frac{1}{|\Gamma|}\innGa{q-\partial_{\bm n} \bm z\cdot \bm n}{1}.
\end{equation}
This constant satisfies the relation
\[\|\partial_{\bm n} \bm z-q \bm n\|_{L^2(\Gamma)/\mathbb R} = \|\partial_{\bm n} \bm z-q \bm n+c(\bm z,q)\bm n\|_{L^2(\Gamma)}.\]
Using this fact, usual trace theory and \eqref{Continuity}, we have that for $0\leq s<1/2$
\begin{equation}\label{NormalTraceContinuity}
\|\partial_{\bm n} \bm z_{\bm f, h}-q_{\bm f, h}\cdot \bm n+c(\bm z_{\bm  f, h},q_{\bm f, h})\bm n\|_{H^s(\Gamma)}\leq C \big(\|\bm f\|_{\bm H^{s-1/2}(\Omega)} + \|h\|_{H^{s+1/2}(\Omega)/\mathbb R}\big).
\end{equation}
The following definition makes sense:

\begin{definition}\label{D2.1}Consider $0\leq s<s^*$ and $\bm u\in \bm V^{-s}(\Gamma)$.  We say $\bm y_{\bm u}\in \bm V^0(\Omega)$, $p_{\bm u} \in \left(H^{1}(\Omega)/\mathbb{R}\right)'$ is a solution in the transposition sense of \eqref{StateEquation} if $ (y_{\bm u},p_{\bm u}) $ satisfy
	\begin{equation}\label{veryWeakForm}
	\innOm{\bm y}{\bm f} - \duaOm{p}{h} = \duaGa{\bm u}{-\partial_{\bm n}\bm z_{\bm f,h}+q_{\bm f,h}\bm n+c(\bm z_{\bm f,h},q_{\bm f,h})\bm n},
	\end{equation}
	for all $\bm f\in \bm L^2(\Omega)$  and $h\in H^1(\Omega)/\mathbb{R}$ such that $\innOm{h}{1}=0$,
	where $(\bm z_{\bm f,h},q_{\bm f,h})\in \bm H^1_0(\Omega) \times L^2_0(\Omega)$ is the unique solution of \eqref{StokesRegularity} and $c(\bm z_{\bm f,h},q_{\bm f,h})$ is the constant given in \eqref{Ray2.2}.
\end{definition}
Notice that if $\bm u\in \bm V^0(\Gamma)$, equation \eqref{veryWeakForm} can be written as
\begin{equation}\label{veryWeakForm0}
\innOm{\bm y}{\bm f} - \duaOm{p}{h} = \innGa{\bm u}{-\partial_{\bm n}\bm z_{\bm f,h}+q_{\bm f,h}\bm n}.
\end{equation}

The definition follows integrating by parts twice the equation and once the null divergence condition. It can be written as two separate equations, one tested with $\bm f$ and the other one with $h$, as in \cite{MR2591040} or \cite{MR2371113}, or as single equation, cf. \cite{MR884813} or \cite{MR1739399}.

Next, we state a regularity result analogous to \cite[Corollary A.1]{MR2371113}. In that reference, a smooth domain is taken into consideration and the limit cases $s=-1/2$ and $s=3/2$ can be achieved; however, this is not possible for polygonal domains so the cited result cannot be directly applied.
\begin{theorem}\label{T2.1}
	Suppose $\bm u\in \bm V^s(\Gamma)$ for $ -s^*< s < \min\{1/2+\xi,3/2\}$. Then the solution of \eqref{StateEquation} satisfies
	\[\bm y_{\bm u}\in \bm V^{s+1/2}(\Omega)\mbox{ and }
	p_{\bm u}\in\left\{\begin{array}{cl}
	H^{s-1/2}(\Omega)/\mathbb R&\mbox{ if }s\geq 1/2,\\
	\left(H^{1/2-s}(\Omega)/\mathbb R\right)'&\mbox{ if }s\leq 1/2.
	\end{array}\right.
	\]
	Moreover, the control-to-state mapping $\bm u\mapsto \bm y_{\bm u}$ is continuous from $\bm V^s(\Gamma)$ to $\bm V^{s+1/2}(\Omega)$.
\end{theorem}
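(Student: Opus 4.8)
The plan is to follow the method of transposition combined with operator interpolation, mirroring the strategy behind \cite[Corollary A.1]{MR2371113} but replacing the endpoint estimates available in smooth domains by \emph{interior} anchor estimates dictated by the polygonal corner singularities. The key observation is that $S:\bm u\mapsto\bm y_{\bm u}$ is a single linear map whose action does not depend on $s$, so it suffices to establish the mapping property $S:\bm V^{s}(\Gamma)\to\bm V^{s+1/2}(\Omega)$ at a few well-chosen values of $s$ and then interpolate. Since the admissible interval is open, I would prove the bound at interior points $s_0$ close to $-s^*$ and $s_1$ close to $\min\{1/2+\xi,3/2\}$, together with the variational value $s=1/2$, and take the union of the resulting closed interpolation intervals to recover the whole open range.

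First I would treat the high-regularity end. For $\bm u\in\bm V^{s}(\Gamma)$ with $s$ near the upper limit I would choose a solenoidal lifting $\bm w\in\bm V^{s+1/2}(\Omega)$ with $\bm w=\bm u$ on $\Gamma$ (available because $\innGa{\bm u\cdot\bm n}{1}=0$ is built into $\bm V^{s}(\Gamma)$) and set $\tilde{\bm y}=\bm y_{\bm u}-\bm w$. Then $(\tilde{\bm y},p_{\bm u})$ solves the incompressible Stokes system \eqref{StokesRegularityIncompressible} with homogeneous boundary data and forcing $\Delta\bm w$, so the sharp regularity estimates \eqref{Continuity} and \Cref{Dauge55b} give $\tilde{\bm y}\in\bm H^{s+1/2}(\Omega)$ and $p_{\bm u}\in H^{s-1/2}(\Omega)/\mathbb R$, hence the stated conclusions for $s\ge 1/2$. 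At $s=1/2$ this is the ordinary variational solution, and I would verify by testing \eqref{StokesRegularity} against it (Green's formula for the Stokes operator together with the definition \eqref{Ray2.2} of $c(\bm z,q)$) that it satisfies \eqref{veryWeakForm}; this consistency is what guarantees the strong solution agrees with the transposition solution of \Cref{D2.1} and that both constructions describe the \emph{same} operator $S$.

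Next I would treat $-s^*<s<0$ directly from \Cref{D2.1} by duality. To show $\bm y_{\bm u}\in\bm H^{\sigma}(\Omega)$ with $\sigma=s+1/2\in(0,1/2)$, I would test \eqref{veryWeakForm} with $h=0$ and arbitrary $\bm f=\bm g\in\bm L^2(\Omega)$, obtaining $\innOm{\bm y_{\bm u}}{\bm g}=\duaGa{\bm u}{-\partial_{\bm n}\bm z_{\bm g,0}+q_{\bm g,0}\bm n+c(\bm z_{\bm g,0},q_{\bm g,0})\bm n}$. Bounding the pairing by $\norm{\bm u}_{\bm V^{s}(\Gamma)}$ times the $\bm V^{-s}(\Gamma)$-norm of the normal-trace term, and invoking \eqref{NormalTraceContinuity} at the index $-s\in(0,1/2)$, gives $|\innOm{\bm y_{\bm u}}{\bm g}|\le C\norm{\bm u}_{\bm V^{s}(\Gamma)}\norm{\bm g}_{\bm H^{-\sigma}(\Omega)}$; since $\sigma<1/2$ the identification $(\bm H^{-\sigma}(\Omega))'=\bm H^{\sigma}(\Omega)$ is valid and yields $\bm y_{\bm u}\in\bm V^{s+1/2}(\Omega)$ with the correct bound. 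The pressure estimate is symmetric: testing with $\bm f=0$ and $h\in H^{1}(\Omega)$, $\innOm{h}{1}=0$, gives $|\duaOm{p_{\bm u}}{h}|\le C\norm{\bm u}_{\bm V^s(\Gamma)}\norm{h}_{H^{1/2-s}(\Omega)/\mathbb R}$, i.e.\ $p_{\bm u}\in(H^{1/2-s}(\Omega)/\mathbb R)'$. The remaining values of $s$, together with the continuity of $S$ between the two scales, then follow by operator interpolation of these endpoint estimates, which also delivers the final continuity assertion.

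The main obstacle is to obtain the \emph{optimal} target regularity $\bm V^{s+1/2}(\Omega)$ rather than the suboptimal $\bm V^{s+1/2-\varepsilon}(\Omega)$ of \cite{MR1663460}. The $\varepsilon$-loss in the earlier approach comes from using $h\in H^1_0(\Omega)$ as test data to gain $\bm H^2$ regularity of the dual state; the sharp result instead requires the compressible estimate \eqref{Continuity} for general $h\in H^1(\Omega)$ at the exact exponent $s<s^*$, together with the corrected normal trace \eqref{NormalTraceContinuity} involving the constant $c(\bm z,q)$ so that the pairing lands precisely in $\bm V^{-s}(\Gamma)$. Equally delicate is the treatment of the borderline $\sigma=1/2$ and of the excluded endpoints $s=-s^*$ and $s=\min\{1/2+\xi,3/2\}$: the estimate \eqref{NormalTraceContinuity} fails at index $1/2$ and the duality $(\bm H^{-\sigma})'=\bm H^{\sigma}$ degenerates at $\sigma=1/2$, which is exactly why the interval must be open and why the argument must proceed through interior anchor points followed by interpolation.
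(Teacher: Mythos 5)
Your treatment of the range $-s^*<s<0$ and the interpolation step coincide almost exactly with the paper's own proof: the paper likewise obtains the two a priori bounds by testing \eqref{veryWeakForm} against normalized families of data $\bm f\in\bm L^2(\Omega)$ and $h\in H^1(\Omega)/\mathbb R$, invoking \eqref{NormalTraceContinuity} at index $-s$, proves uniqueness by testing with $\bm u=\bm 0$, passes to the limit using density of $\bm V^{1/2}(\Gamma)$ in $\bm V^s(\Gamma)$, and settles $0\leq s<1/2$ by interpolation. Your direct duality formulation is equivalent (with the small caveat that the duality bound presupposes existence of the transposition solution, which you should secure either by Riesz representation or, as the paper does, by the density argument), and your consistency check that the variational and transposition solutions agree at $s=1/2$ is a point the paper only states as a remark.

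The genuine gap is in your regular case $1/2\leq s<\min\{1/2+\xi,3/2\}$, and it is twofold. First, you assert the existence of a \emph{solenoidal} lifting $\bm w\in\bm V^{s+1/2}(\Omega)$ of $\bm u$ merely from the zero-flux condition. On a polygonal domain with $s$ up to $3/2$ this is not automatic: producing it from an arbitrary lifting $\bm Y$ requires solving $\nabla\cdot\bm w_0=H$ with $\bm w_0\in\bm H^{s+1/2}(\Omega)\cap\bm H^1_0(\Omega)$, where $H=\nabla\cdot\bm Y\in H^{s-1/2}(\Omega)$ carries no boundary conditions; once $s\geq 1$ one is in the regime $s-1/2\geq 1/2$ where $H^{s-1/2}_0(\Omega)\neq H^{s-1/2}(\Omega)$ and Bogovskii-type correction operators do not apply off the shelf --- this is precisely the obstruction behind the paper's Remark on why $h\in H^1_0(\Omega)$ is needed for Dauge's Theorem 5.5(c) and why earlier constructions lost an $\varepsilon$. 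The clean way to build the correction at optimal order is the compressible estimate \eqref{Continuity} (Dauge's Theorem 5.5(a)) with divergence datum $H$, at which point your reduction to the incompressible problem becomes circular: the paper simply takes a non-solenoidal lifting $\bm Y\in\bm H^{s+1/2}(\Omega)$, which exists by definition of the trace space, and applies the compressible regularity directly to $(\bm F,H)=(-\Delta\bm Y,\nabla\cdot\bm Y)$.

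Second, even granting the solenoidal lifting, your appeal to \Cref{Dauge55b} fails throughout the range: the forcing is $\Delta\bm w\in\bm H^{s-3/2}(\Omega)$, so concluding $\tilde{\bm y}\in\bm H^{s+1/2}(\Omega)$ would require taking $t=s-1/2$ in that theorem, whereas its hypothesis demands $1\leq t<\xi$, i.e.\ $s\geq 3/2$ --- excluded from the entire interval under consideration (and \Cref{Dauge55b} moreover assumes convexity, which the theorem you are proving does not). The estimates that actually handle negative-order forcing are \eqref{Continuity} with $h=0$, which covers $1<s<1+s^*=\min\{1/2+\xi,3/2\}$, to be combined with the variational solution at $s=1/2$ and interpolation for $s\in[1/2,1]$; your regular case needs to be rebuilt along these lines, or, more economically, replaced by the paper's compressible-lifting argument.
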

\begin{proof}
	The proof follows by interpolation. The technique of proof is the same as in \cite[Appendix A]{MR2371113} or \cite[Section 2]{MR3432846}, so we will just give a sketch of the proof and check some of the details that are different from those references.
	
	We first do the regular case. Suppose $1/2\leq s<\min\{1/2+\xi,3/2\}$.
	From the definition of $\bm V^s(\Gamma)$ we know that there exists $\bm Y\in \bm H^{s+1/2}(\Omega)$ such that the boundary trace of $\bm Y$ equals $\bm u$. So we have that $\bm F = -\Delta \bm Y\in \bm H^{s-3/2}(\Omega)$ and $H = \nabla \cdot \bm Y\in H^{s-1/2}(\Omega)$. By linearity, we have that $\bm y_{\bm u}-\bm Y = \bm z_{\bm F, H}$ and $p_{\bm u} = p_{\bm F, H}$, where $(\bm z_{\bm F, H},p_{\bm F, H})$ is the variational solution of \eqref{StokesRegularity} for data $(\bm F,H)$. From \cite[Theorem 5.5(a)]{MR977489}, and using that $s-1/2< \xi$, we have then that $\bm y_{\bm u}-\bm Y\in \bm H^{s+1/2}(\Omega)$ and $p_{\bm u}\in H^{s-1/2}(\Omega)/\mathbb R$, and the result follows in a straightforward way.
	
	Consider now $-s^*<s<0$. Uniqueness follows testing \eqref{veryWeakForm} for the data $\bm u=\bm 0$ and the pairs $(\bm f,0)$ and $(\bm 0, h)$ for $\bm f\in\bm L^2(\Omega)$ and $h\in H^1(\Omega)$ such that $\innOm{h}{1}=0$; compare to \cite[Theorem A.1(i)]{MR2371113} or \cite[Theorem 2.5]{MR3432846}.
	
	Existence follows by density arguments. Take $\bm u\in \bm V^{1/2}(\Gamma)$, which is dense in $\bm V^{s}(\Gamma)$.
	Notice that $-1/2<-s-1/2<0$ and $1/2<1/2-s<1$ and hence $\bm L^2(\Omega)$ is dense in $\bm H^{-s-1/2}(\Omega)$ and $H^1(\Omega)$ is dense in $ H^{1/2-s}(\Omega)$.  Therefore, we can consider
	\[\mathcal{F}=\{\bm f\in \bm L^2(\Omega):\ \|\bm f\|_{\bm H^{-s-1/2}(\Omega)} = 1 \}\]
	and
	\[\mathcal{H} = \{h\in H^1(\Omega)/\mathbb{R}:\ \|h\|_{H^{1/2-s}(\Omega)/\mathbb R} = 1\}\]
	to test the norms in $\bm H^{s+1/2}(\Gamma)$ and $\left(H^{1/2-s}(\Omega)/{\mathbb{R}}\right)'$ respectively of the variational solution $(\bm y_{\bm u},p_{\bm u})$ of \eqref{veryWeakForm}. We obtain, using estimate \eqref{NormalTraceContinuity},
	\begin{align*}
	\|\bm y_{\bm u}\|_{\bm H^{s+1/2}(\Gamma)} &= \sup_{\bm f\in\mathcal{F}}[\bm f,\bm y_{\bm u}]_{\bm H^{-s-1/2}(\Omega),\bm H^{s+1/2}(\Omega)} =  \sup_{\bm f\in\mathcal{F}}\innOm{\bm f}{\bm y_{\bm u}}\\
	&= \sup_{\bm f\in\mathcal{F}}  [\bm u, -\partial_{\bm n}\bm z_{\bm f, 0} + q_{\bm f,0}\bm n+
	c(\bm z_{\bm f,0},q_{\bm f,0})\bm n  ]_{\bm H^{s}(\Gamma),\bm H^{-s}(\Gamma)}\\
	&\leq \sup_{\bm f\in\mathcal{F}}  \|\bm u\|_{\bm H^{s}(\Gamma)}\| -\partial_{\bm n}\bm z_{\bm f, 0} + q_{\bm f,0}\bm n+
	c(\bm z_{\bm f,0},q_{\bm f,0})\bm n    \|_{\bm H^{-s}(\Gamma)}\\
	&\leq C \sup_{\bm f\in\mathcal{F}}  \|\bm u\|_{\bm H^{s}(\Gamma)} \|\bm f \| _{\bm H^{-s-1/2}(\Omega)} = C  \|\bm u\|_{\bm H^{s}(\Gamma)},
	\end{align*}
	and
	\begin{align*}
	\hspace{2em}&\hspace{-2em}\|p_{\bm u}\|_{\left(H^{1/2-s}(\Omega)/{\mathbb{R}}\right)'}\\
	&= \sup_{h\in\mathcal{H}}[p_{\bm u},h]_{\left(H^{1/2-s}(\Omega)/{\mathbb{R}}\right)',H^{1/2-s}(\Omega)/{\mathbb{R}}} \\
	&=  \sup_{h\in\mathcal{H}}[p_{\bm u},h]_{\left(H^{1}(\Omega)/{\mathbb{R}}\right)',H^{1}(\Omega)/{\mathbb{R}}} \\
	&= \sup_{h\in\mathcal{H}}[\bm u, -\partial_{\bm n}\bm z_{\bm 0, h} + q_{0,h}\bm n+
	c(\bm z_{\bm 0,h},q_{\bm 0,h})\bm n
	]_{\bm H^{s}(\Gamma)/{\mathbb{R}},\bm H^{-s}(\Gamma)/{\mathbb{R}}}\\
	&\leq \sup_{h\in\mathcal{H}}  \|\bm u\|_{\bm H^{s}(\Gamma)}\| -\partial_{\bm n}\bm z_{\bm 0, h} + q_{\bm 0, h}\bm n+
	c(\bm z_{\bm 0,h},q_{\bm 0,h})\bm n
	\|_{\bm H^{-s}(\Gamma)}\\
	&\leq C \sup_{h\in\mathcal{H}}  \|\bm u\|_{\bm H^{s}(\Gamma)} \|h \| _{\bm H^{1/2-s}(\Omega)/\mathbb R} = C  \|\bm u\|_{\bm H^{s}(\Gamma)}.
	\end{align*}
	The above proved estimates allow us to take a sequence $\bm u_n$ in $\bm V^{1/2}(\Gamma)$ converging to $\bm u$ in $\bm V^{s}(\Gamma)$ and obtain $\bm y_{\bm u}\in  \bm V^{s+1/2}(\Omega)$ and $p_{\bm u}\in (H^{1/2-s}(\Omega)/\mathbb{R})' $ as the limits of the sequences $\bm y_{\bm u_n}$ and $p_{\bm u_n}$; cf. \cite[Theorem A.1(ii)]{MR2371113} or \cite[Theorem 2.5]{MR3432846}.
	
	Finally, the case $0\leq s < 1/2$ follows by interpolation.
\end{proof}
\begin{remark}
	If $\bm u\in \bm V^{1/2}(\Gamma)$, then the very weak solution and the variational solution are the same.
\end{remark}

Next, we have to give some meaning to the  mixed form. The main problem is that for data in $\bm u\in \bm V^s(\Gamma)$, $s<1/2$, the gradient of the state is not a function in $\mathbb L^2(\Omega)$.

We start with the regular compressible Stokes problem. Consider $\bm f\in \bm L^2(\Omega)$ and $h\in H^1(\Omega)$ such that $\innOm{h}{1}=0$ and denote $\bm z = \bm z_{\bm f,h}$ and $q=q_{\bm f,h}$ the (variational) solution of \eqref{StokesRegularity}. If we denote $\mathbb{G}_{\bm f,h}=\nabla \bm z_{\bm f,h}\in \mathbb  L^2(\Omega)$, we have that the triplet $(\mathbb{G}_{\bm f,h},\bm z_{\bm f,h}, q_{\bm f,h})\in \mathbb  L^2(\Omega)\times \bm H^1_0(\Omega)\times L^2_0(\Omega)$ is the unique solution of the weak formulation
\begin{align}
\innOm{\mathbb{G}}{\mathbb{T}} +\innOm{\bm z}{\nabla\cdot\mathbb{T}}&= 0,\label{E2.4mixa}\\
\innOm{\mathbb{G}}{\nabla\bm v} - \innOm{q}{\nabla\cdot \bm v}&=\innOm{\bm f}{\bm v},\label{E2.4mixb}\\
-\innOm{\bm z}{\nabla w} &= \innOm{h}{w},\label{E2.4mixc}\\
\innOm{q}{1}&=0,\label{E2.4mixd}
\end{align}
for all $(\mathbb{T},\bm v,w )\in \mathbb H(\textup{div},\Omega)\times \bm H_0^1(\Omega)\times H^1(\Omega)$.
Moreover, it is clear that the regularity results stated above for \eqref{StokesRegularity} apply and $\mathbb G_{\bm f,h}\in \mathbb  H^{s-1/2}(\Omega)$ for $0<s<s^*$. Notice also we can define analogously to \eqref{Ray2.2}
\begin{equation}
\label{Ray2.2b}c(\mathbb G,q) = \frac{1}{|\Gamma|}\innGa{q-(\mathbb G \bm n)\cdot \bm n}{1}.
\end{equation}

Next we give a mixed formulation of problem \eqref{veryWeakForm} for Dirichlet data $\bm u\in\bm V^s(\Gamma)$ for $-s^*<s$.
\begin{definition}
	For $-s^*<s$ and $\bm u\in\bm V^s(\Gamma)$, we say $\bm y_{\bm u}\in \bm V^0(\Omega)$, $\mathbb L_{\bm u} = \nabla \bm y_{\bm u} \in (\mathbb H^{1}(\Omega))'$, $p_{\bm u}\in (H^{1}(\Omega)/\mathbb R)'$ is a solution in the transposition sense of
	\begin{align*}
	-\Delta\bm y+\nabla p &=\bm f \quad  \text{in}\ \Omega,\\
	\nabla\cdot\bm y&=0 \quad\  \text{in}\ \Omega,\\
	\bm y&=\bm u \quad\  \text{on}\  \Gamma,
	\end{align*}
	if $ (\bm y_{\bm u}, \mathbb L_{\bm u}, p_{\bm u}) $ satisfy
	\begin{subequations}\
		\begin{align}
		\duaOm{\mathbb{L}}{\mathbb{T}} &= -\innOm{\bm y}{\nabla\cdot\mathbb{T}}+\duaGa{\bm u}{\mathbb{T}\bm n} ,\label{DE2.3b}\\
		\innOm{\bm y}{\bm f} - \duaOm{p}{h} &=\duaGa{\bm u}{-\mathbb{G}_{\bm f,h} \bm n+q_{\bm f,h}\bm n+c(\mathbb{G}_{\bm f,h},q_{\bm f,h}) \bm n}, \label{DE2.3a}
		\end{align}
	\end{subequations}
	for every $\bm f\in \bm L^2(\Omega)$, $h\in H^1(\Omega)$ such that $\innOm{h}{1}=0$ and $\mathbb T\in  \mathbb H^1(\Omega)$,
	where $(\mathbb{G}_{\bm f,h},\bm z_{\bm f,h}, q_{\bm f,h})\in \mathbb L^2(\Omega)\times \bm H^1_0(\Omega)\times L^2_0(\Omega)$ is the solution of \eqref{E2.4mixa}--\eqref{E2.4mixd} for data $(\bm f,h)$.
\end{definition}

The above definition simply incorporates an adequate definition for the gradient to the transposition solution defined in \Cref{D2.1}. Nevertheless, this formulation is still not appropriate to use together with \eqref{E2.4mixa}--\eqref{E2.4mixd} in the context of hybridizable discontinuous Galerkin methods. Taking advantage of the regularity results stated in \Cref{T2.1}, we have $\mathbb{L}_{\bm u} \in \mathbb H^{s-1/2}(\Omega)$ if $1/2\leq s< \min\{1/2+\xi,3/2\}$ and $\mathbb{L}_{\bm u} \in (\mathbb  H^{1/2-s}(\Omega))'$ if $-s^*<s< 1/2$.

So we have that if $-s^*<s< 1/2$ and $\bm u\in \bm V^s(\Gamma)$, then there exists a unique solution $(\mathbb{L}_{\bm u},\bm y_{\bm u},p_{\bm u})\in(\mathbb H^{1/2-s}(\Omega))'\times \bm V^{1/2+s}(\Omega)\times  \left(H^{1/2-s}(\Omega)/\mathbb R\right)'$ of the problem
\begin{subequations}
	\begin{align}
	\duaOm{\mathbb{L}}{\mathbb{T}}+\innOm{\bm y}{\nabla\cdot\mathbb{T}} &= \duaGa{\bm u}{\mathbb{T}\bm n},\label{DE2.4c}\\
	\duaOm{\mathbb{L}}{\mathbb{G}_{\bm{f},h}} - \duaOm{p}{h} &= 0 ,\label{DE2.4a} \\
	\duaOm{\nabla q_{\bm{f},h}}{\bm y}&= \duaGa{\bm u}{q_{\bm{f},h} \bm n},\label{DE2.4b}
	\end{align}
\end{subequations}
for every $\bm f\in \bm L^2(\Omega)$, $h\in H^1(\Omega)$ such that $\innOm{h}{1}=0$ and $\mathbb T\in \mathbb H^1(\Omega)$,
where
$(\mathbb{G}_{\bm{f},h},\bm z_{\bm{f},h}, q_{\bm{f},h})\in \displaystyle\bigcap_{t<s^*}\mathbb  H^{1/2+t}(\Omega) \times \bm H^{3/2+t}(\Omega)\times H^{1/2+t}(\Omega)/\mathbb R\hookrightarrow  \mathbb H^{1/2-s}(\Omega)\times \bm H^{3/2-s}(\Omega)\times H^{1/2-s}(\Omega)/\mathbb R$
is the solution of \eqref{E2.4mixa}--\eqref{E2.4mixd} for data $(\bm f,h)$.

Taking all this into account we can summarize our results in the following theorem.
\begin{theorem}\label{T2.4}
	For every $\bm u\in \bm V^0(\Gamma)$, there exists a unique solution $$(\mathbb{L}_{\bm u},\bm y_{\bm u},p_{\bm u}) \in  (\mathbb H^{1/2}(\Omega))'\times\bm V^{1/2}(\Omega)\times  \left(H^{1/2}(\Omega)/\mathbb R\right)'$$ of
	\begin{subequations}
		\begin{align}
		\duaOm{\mathbb{L}}{\mathbb{T}}+\innOm{\bm y}{\nabla\cdot\mathbb{T}} &= \innGa{\bm u}{\mathbb{T}\bm n},\label{SE3}\\
		\duaOm{\mathbb{L}}{\nabla \bm v} - \duaOm{p}{\nabla \cdot\bm v} &= 0,\label{SE1} \\
		\duaOm{\nabla w}{\bm y}&= \innGa{\bm u}{w \bm n},\label{SE2}
		\end{align}
		for all $(\mathbb T, \bm v, w) \in \mathbb H^1(\Omega) \times\bigcap_{t<s^*} \bm H^{3/2+t}(\Omega)\cap \bm H^1_0(\Omega)\times \bigcap_{t<s^*} H^{1/2+t}(\Omega)$.
	\end{subequations}
	Moreover, if  $\bm u\in \bm V^s(\Gamma)$, $-1/2<s<s^*$, then  $(\mathbb{L}_{\bm u},\bm y_{\bm u},p_{\bm u})\in (\mathbb H^{1/2-s}(\Omega))'\times  \bm V^{1/2+s}(\Omega)\times  \left(H^{1/2-s}(\Omega)/\mathbb R\right)'$. Finally, the control-to-state mapping $\bm u \mapsto $ \,\!\! $ (\mathbb{L}_{\bm u}, \bm y_{\bm u},p_{\bm u})$ is continuous from $$\bm V^s(\Gamma) \quad \mbox{to} \quad (\mathbb H^{1/2-s}(\Omega))'\times \bm V^{1/2+s}(\Omega)\times \left(H^{1/2-s}(\Omega)/\mathbb R\right)'$$ for $-s^*<s<\min\{1/2+\xi,3/2\}$.
	
\end{theorem}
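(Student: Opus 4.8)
The plan is to read \Cref{T2.4} as a repackaging of the results already established, so that the proof splits into two essentially independent tasks: rewriting the mixed transposition system \eqref{DE2.4c}--\eqref{DE2.4b} as the decoupled system \eqref{SE3}--\eqref{SE2}, and then reading off the regularity and continuity statements from \Cref{T2.1}. First I would record what the paragraph preceding the theorem already gives: for $-s^*<s<1/2$ the system \eqref{DE2.4c}--\eqref{DE2.4b} is uniquely solvable with $(\mathbb L_{\bm u},\bm y_{\bm u},p_{\bm u})\in(\mathbb H^{1/2-s}(\Omega))'\times\bm V^{1/2+s}(\Omega)\times(H^{1/2-s}(\Omega)/\mathbb R)'$, and specializing to $s=0$ produces precisely the spaces in the first assertion.

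The heart of the argument is a change of test function. Equation \eqref{SE3} coincides with \eqref{DE2.4c}, the pairing $\duaGa{\cdot}{\cdot}$ collapsing to $\innGa{\cdot}{\cdot}$ because $\bm u\in\bm V^0(\Gamma)$. Using $\mathbb G_{\bm f,h}=\nabla\bm z_{\bm f,h}$ and $\nabla\cdot\bm z_{\bm f,h}=h$, the choice $\bm v=\bm z_{\bm f,h}$ turns \eqref{SE1} into \eqref{DE2.4a}, while $w=q_{\bm f,h}$ turns \eqref{SE2} into \eqref{DE2.4b}. To pass from these particular test functions to arbitrary ones in the spaces of \Cref{T2.4}, I would argue by density, exploiting the fact that the range of the data-to-solution map already contains a dense subspace: for any $\bm v\in\bm H^2(\Omega)\cap\bm H^1_0(\Omega)$ the data $\bm f=-\Delta\bm v\in\bm L^2(\Omega)$ and $h=\nabla\cdot\bm v\in H^1(\Omega)$ yield $\bm z_{\bm f,h}=\bm v$ with zero pressure, and $\bm H^2(\Omega)\cap\bm H^1_0(\Omega)$ is dense in $\bigcap_{t<s^*}\bm H^{3/2+t}(\Omega)\cap\bm H^1_0(\Omega)$; likewise, taking $\bm z=0$ and $\bm f=\nabla w$ realizes any $w\in H^1(\Omega)/\mathbb R$ as some $q_{\bm f,h}$, and $H^1(\Omega)/\mathbb R$ is dense in $\bigcap_{t<s^*}H^{1/2+t}(\Omega)/\mathbb R$.

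The identities then extend to the full test spaces by continuity of the two functionals $\bm v\mapsto\duaOm{\mathbb L_{\bm u}}{\nabla\bm v}-\duaOm{p_{\bm u}}{\nabla\cdot\bm v}$ and $w\mapsto\duaOm{\nabla w}{\bm y_{\bm u}}-\innGa{\bm u}{w\bm n}$, and this is exactly where the \emph{optimal} regularity matters rather than the $\varepsilon$-lossy estimate of \cite{MR1663460}: choosing $t$ with $\max\{0,-s\}<t<s^*$ makes $\nabla\bm v\in\mathbb H^{1/2+t}(\Omega)\hookrightarrow\mathbb H^{1/2-s}(\Omega)$ and $\nabla\cdot\bm v\in H^{1/2+t}(\Omega)\hookrightarrow H^{1/2-s}(\Omega)$, so the first functional is bounded on $\bm H^{3/2+t}(\Omega)$ through $\mathbb L_{\bm u}\in(\mathbb H^{1/2-s}(\Omega))'$ and $p_{\bm u}\in(H^{1/2-s}(\Omega)/\mathbb R)'$; the second is bounded the same way using $\bm y_{\bm u}\in\bm H^{1/2+s}(\Omega)$ and the trace of $w$ on $\Gamma$. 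Uniqueness for \eqref{SE3}--\eqref{SE2} is inherited from that of \eqref{DE2.4c}--\eqref{DE2.4b}, since restricting the admissible test functions back to $(\bm z_{\bm f,h},q_{\bm f,h})$ turns any solution of the former into a solution of the latter. I expect this density-and-continuity promotion to be the only genuinely delicate point; it hinges on having the sharp regularity of $\mathbb L_{\bm u}$ and $p_{\bm u}$ so that the duality pairings close on the intersection test spaces.

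It remains to dispatch the regularity and continuity assertions for the full range of $s$. For $-1/2<s<s^*$ I would note that convexity forces $\xi>1$, hence $s^*=1/2$, so this is exactly the very weak range $-s^*<s<1/2$ already handled; for $1/2\le s<\min\{1/2+\xi,3/2\}$ I would invoke the regular case of \Cref{T2.1}, where $\bm y_{\bm u}\in\bm H^{1/2+s}(\Omega)$ is a genuine function, $\mathbb L_{\bm u}=\nabla\bm y_{\bm u}\in\mathbb H^{s-1/2}(\Omega)$, $p_{\bm u}\in H^{s-1/2}(\Omega)/\mathbb R$, and \eqref{SE3}--\eqref{SE2} reduce to ordinary integrations by parts against $\mathbb T$, $\bm v$, and $w$. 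Finally, continuity of $\bm u\mapsto(\mathbb L_{\bm u},\bm y_{\bm u},p_{\bm u})$ on $-s^*<s<\min\{1/2+\xi,3/2\}$ follows by combining the continuity of $\bm u\mapsto\bm y_{\bm u}$ from \Cref{T2.1} with the bounded map $\bm y_{\bm u}\mapsto\nabla\bm y_{\bm u}=\mathbb L_{\bm u}$ from $\bm V^{1/2+s}(\Omega)$ into $(\mathbb H^{1/2-s}(\Omega))'$, together with the accompanying bound for $p_{\bm u}$ contained in the well-posedness estimate.
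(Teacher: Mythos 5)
Your proposal is correct in substance and follows essentially the same route as the paper: the paper offers no standalone proof of \Cref{T2.4} --- it is explicitly presented as a summary (``Taking all this into account we can summarize our results'') of \Cref{D2.1}, \Cref{T2.1}, and the unique solvability of the mixed system \eqref{DE2.4c}--\eqref{DE2.4b} recorded in the preceding paragraph --- and your reconstruction (identify \eqref{SE3} with \eqref{DE2.4c}; obtain \eqref{SE1} and \eqref{SE2} from \eqref{DE2.4a}--\eqref{DE2.4b} via the substitutions $\bm v=\bm z_{\bm f,h}$, $w=q_{\bm f,h}$; promote to the full intersection test spaces by density plus the sharp regularity of $(\mathbb L_{\bm u},p_{\bm u})$; inherit uniqueness by restricting test functions; read off regularity and continuity from \Cref{T2.1}) is precisely the argument the paper leaves implicit. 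Your realization of arbitrary $w\in H^1(\Omega)/\mathbb R$ as $q_{\bm f,0}$ with $\bm f=\nabla w$, $\bm z=0$ is exactly right, as is the choice $\max\{0,-s\}<t<s^*$ that makes the two functionals close on the intersection spaces.

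One step deserves repair. The only load-bearing claim you assert without justification is the density of $\bm H^2(\Omega)\cap\bm H^1_0(\Omega)$ in $\bigcap_{t<s^*}\bm H^{3/2+t}(\Omega)\cap\bm H^1_0(\Omega)$. This is plausible (it would follow from Grisvard-type interpolation identifications of $H^\sigma\cap H^1_0$ for $3/2<\sigma<2$), but it is not elementary; in particular, on a nonconvex polygon --- where \Cref{T2.4} is also asserted --- you cannot manufacture $\bm H^2\cap\bm H^1_0$ approximants by solving with mollified $\bm L^2$ data, since $H^2$ regularity fails there. A cleaner promotion, which avoids the claim entirely and uses only the paper's own estimate \eqref{Continuity}, is to approximate the \emph{data} rather than the test function: for $\bm v$ in the intersection space, $(\bm v,0)$ is the variational solution of \eqref{StokesRegularity} with data $(-\Delta\bm v,\nabla\cdot\bm v)\in \bm H^{t-1/2}(\Omega)\times H^{t+1/2}(\Omega)$; take $(\bm f_n,h_n)\in\bm L^2(\Omega)\times H^1(\Omega)$ with $\innOm{h_n}{1}=0$ converging to this data in those norms, so that \eqref{Continuity} gives $\bm z_{\bm f_n,h_n}\to\bm v$ in $\bm H^{3/2+t}(\Omega)$, $\mathbb G_{\bm f_n,h_n}\to\nabla\bm v$ and $q_{\bm f_n,h_n}\to 0$, and pass to the limit in \eqref{DE2.4a} exactly as you do. Finally, a cosmetic point: your reduction of the range $-1/2<s<s^*$ to the very weak range uses $s^*=1/2$, which holds only for convex $\Omega$ (where $\xi>1$); for nonconvex domains $s^*=\xi-1/2<1/2$ and the transposition solution is only defined for $s>-s^*$, so that clause of the statement should be read as $-s^*<s<s^*$ there --- a wrinkle in the theorem's formulation rather than a defect of your argument.
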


\subsection{Well posedness and regularity of the tangential control problem}
It is clear that $\bm U\hookrightarrow \bm L^2(\Gamma)$ and there is no ambiguity in denoting by $ u$ the elements of $\bm U$. Hence the control-to-state mapping $u\mapsto \bm y_u$ is continuous from $\bm U$ to $\bm V^{1/2}(\Omega)$, and there exists a unique solution of the  control problem
\[(P_{\bm \tau})\qquad\min J(u) = \frac{1}{2}\|\bm y_u-\bm y_d\|^2_{\bm L^2(\Omega)}+\frac{\gamma}{2}\|u\|^2_{L^2(\Gamma)},\]
where $\bm y_{u}$ is the solution of the state equation
\begin{subequations}
	\begin{align}
	\duaOm{\mathbb{L}}{\mathbb{T}}+\innOm{\bm y}{\nabla\cdot\mathbb{T}} &= \innGa{u\bm \tau}{\mathbb{T}\bm n},\label{SE3t}\\
	\duaOm{\mathbb{L}}{\nabla \bm v} - \duaOm{p}{\nabla\cdot\bm v} &=0,\label{TCSE1} \\
	\duaOm{\nabla w}{\bm y}&=0,\label{SE1t}
	\end{align}
	for all $(\mathbb T, \bm v, w) \in \mathbb H^1(\Omega) \times \bigcap_{t<s^*} \bm H^{3/2+t}(\Omega)\cap \bm H^1_0(\Omega)\times \bigcap_{t<s^*} H^{1/2+t}(\Omega)$.
\end{subequations}
Notice that \eqref{SE3t}, \eqref{TCSE1}, \eqref{SE1t} is the weak formulation \eqref{SE3}, \eqref{SE1}, \eqref{SE2} obtained in \Cref{T2.4} for the Stokes problem \eqref{Ori_problem2} with Dirichlet datum $u\bm \tau$, where we have used that $u\bm \tau \cdot  w \bm n = 0$ for any pair of functions $u,w$ in $L^2(\Gamma)$.
%


\begin{theorem}\label{T2.8}Suppose $\bm y_d \in \bm H^{\min\{2,\xi\} }(\Omega)$. Let $u\in L^2(\Gamma)$ be the solution of problem $(P_{\bm\tau})$. Then
	\begin{align}\label{opt_control_global_reg}
	u\in H^s(\Gamma)
\end{align}
	for all $1/2< s<\min\{3/2,\xi-1/2\}$ and there exists
	\begin{align*}
	\bm y &\in \bm V^{s+1/2}(\Omega),  &  \mathbb{L} &\in \mathbb  H^{s-1/2}(\Omega),  &  p &\in H^{s-1/2}(\Omega)\cap L_0^2(\Omega),\\
	\bm z &\in \bm V_0^{r+1}(\Omega),  &  \mathbb{G} &\in \mathbb H^r(\Omega),  &  q &\in H^{r}(\Omega)\cap L_0^2(\Omega),
	\end{align*}
	for all $1 <r<\min\{3,\xi\}$ such that
	\begin{subequations}
		\begin{align}
		\innOm{\mathbb{L}}{\nabla \bm v} - \innOm{p}{\nabla\cdot\bm v} &= 0 ,\label{TCOSE11} \\
		\innOm{\nabla w}{\bm y}&= 0,\label{TCOSE12}\\
		\innOm{\mathbb{L}}{\mathbb{T}}+\innOm{\bm y}{\nabla\cdot\mathbb{T}} &= \innGa{u\bm\tau}{\mathbb{T}\bm n},\label{TCOSE13}\\
		\innOm{\mathbb{G}}{\nabla\bm v} +\innOm{q}{\nabla\cdot \bm v}&= \innOm{\bm y-\bm y_d}{\bm v},\label{TCOASE11}\\
		-\innOm{\bm z}{\nabla w} &=0,\label{TCOASE12}\\
		\innOm{\mathbb{G}}{\mathbb{T}} +\innOm{\bm z}{\nabla\cdot\mathbb{T}}&=0,\label{TCOASE4}\\
		\innGa{\gamma  u\bm\tau-\mathbb{G}\bm n}{\mu\bm \tau} &= 0,\label{1TCOOC}
		\end{align}
	\end{subequations}
	for all $(\mathbb T, \bm v, w, \mu )\in \mathbb H(\textup{div}, \Omega)\times \bm H_0^1(\Omega)\times H^1(\Omega)\times L^2(\Gamma)$. Moreover,
	\begin{equation}\label{eqn:opt_control_local_reg}
	u \in \prod_{i=1}^{m} H^{r-1/2}(\Gamma_i)\mbox{ for all }r<\min\{3,\xi\}.
	\end{equation}
\end{theorem}
\begin{proof}
	The optimality conditions follow in a standard way by computing the derivative of the functional with the help of the chain rule, the integration by parts formula and \Cref{D2.1}. The regularity follows from a bootstrapping argument.
	
	From \Cref{T2.1} we have that $\bm y\in \bm V^{1/2}(\Omega)$. Using this and the regularity of the data $\bm y_d$, we deduce from \Cref{Dauge55b} that
	$\bm z\in \bm V^{t+1}(\Omega)$ and $q\in H^{t}(\Omega)\cap L_0^2(\Omega)$ for all $t\leq 3/2$ such that $t<\xi$. From the trace theory, it is clear that
	\[\mathbb{G} \bm n = \partial_{\bm n}\bm z\in \Pi_{i=1}^m \bm H^{t-1/2}(\Gamma_i) \mbox{ for all } t\leq 3/2\mbox{ such that }t<\xi.\]
	Since $\xi>1$, we notice that the gradient of the dual pressure $q$ is a function in $H^{t-1}(\Omega)$ with $t-1>0$. So we have that each component $z^i$, $i=1,2$ of $\bm z$, satisfies $\-\Delta z^i\in H^{t-1}(\Omega)$ and $z^i=0$ on $\Gamma$. Therefore, we have that $\partial_{\bm n}z^i(x_j)=0$, $i=1,2$, for every corner $x_j$ (cf.\ \cite[Appendix A]{MR2567245}, \cite[Section 4]{MR2837508}), and hence
	we also have (cf.\ \cite[Lemma A.2]{MR2567245}) that
	\[\mathbb{G} \bm n=\partial_{\bm n}\bm z\in  \bm H^{t-1/2}(\Gamma)\mbox{ for all }t\leq 3/2\mbox{ such that }t<\xi.\]
	Next, using that the pressure does not appear in the optimality condition \eqref{1TCOOC}, we can write
	\[\gamma u\bm \tau = \mathbb G \bm n=\partial_{\bm n}\bm z,\]
	and therefore the Dirichlet datum of the state equation is also in the space $ \bm H^{t-1/2}(\Gamma)$ for all $t\leq 3/2$ such that $t<\xi$.
	
	Repeating the argument, we obtain in a first step from \Cref{T2.1} that $\bm y\in \bm V^{t}(\Gamma)$ for all $t\leq 3/2$ such that $t<\xi$, which leads, together with the maybe higher regularity of $\bm y_d$ and \Cref{Dauge55b}, to $\bm z \in \bm V^{1/2+t_2}(\Omega)$ for $t_2\leq 5/2$, $t_2<\xi$. The normal trace argument leads to $u\bm\tau\in \Pi_{i=1}^m \bm H^{t_2-1/2}(\Gamma_i)$, but when we paste together the pieces with the help of the zero value at the corners, we cannot go further for the Dirichlet datum of the state equation than
	\begin{equation}\label{DirichletDataRegularity}u\bm\tau\in \bm V^{s}(\Gamma)\mbox{ for }s<3/2,\ s<\xi-1/2.\end{equation}
	The claimed regularity for the optimal control follows from the previous relation.
	
	Taking the same argument for a third time, we obtain the regularity of the other involved variables.
\end{proof}
Notice that a higher regularity of the target state would not lead to a higher regularity of the solution, since it is mainly bounded by the singularities that appear due to the corners. Low regularity of the target would nevertheless lead to a low regularity solution. Suppose for instance that $\xi>2$ ($\omega>0.7\pi$) and $\bm y_d\in\bm H^\alpha(\Omega)$, with $\alpha <2$. If $\alpha <1$, then the gradient of the dual pressure would not be a function, and the argument of the proof would not lead to any conclusion. If $1\leq \alpha\leq 3/2$, then the argument would stop in the first step, obtaining regularity for the control $u\in H^{\alpha-1/2}(\Gamma)$. If $3/2<\alpha<2$, then the argument would finish in the second step obtaining again $u\in H^{\alpha-1/2}(\Gamma)$.

We use the following reformulation of the optimality system in our analysis of the HDG method:
\begin{corollary}\label{T2.9} Let $\bm y_d\in \bm H^{\min\{2,\xi\} }(\Omega)$. Then the solution of the optimality system \eqref{TCOSE11}-\eqref{1TCOOC} also satisfies the following well-posed problem: find
	\begin{gather*}
	u \in H^{1/2}(\Gamma),  \quad  \bm y \in \bm V^1(\Omega),  \quad  \mathbb{L} \in \mathbb L^2(\Omega),  \quad  p \in L^2_0(\Omega),\\
	\bm z \in \bm V^1_0(\Omega),  \quad  \mathbb{G} \in \mathbb L^2(\Omega),  \quad  q \in L^2_0(\Omega),
	\end{gather*}
	such that $ \mathbb L - p\mathbb{I}, \mathbb{G}+q\mathbb{I} \in \mathbb H(\textup{div}, \Omega) $ and
	\begin{subequations}
		\begin{align}
		\innOm{\mathbb{L}}{\mathbb{T}}+\innOm{\bm y}{\nabla \cdot\mathbb{T}} &= \innGa{u\bm\tau}{\mathbb{T}\bm n}, \label{TCOSE1}\\
		-\innOm{\nabla\cdot(\mathbb{L}- p\mathbb I)}{ \bm v} &=0,\label{TCOSE2} \\
		(\nabla\cdot\bm y, w)_{\Omega} &=0,\label{TCOSE3}\\
		\innOm{\mathbb{G}}{\mathbb{T}} +\innOm{\bm z}{\nabla\cdot\mathbb{T}}&=0,\label{TCOASE1}\\
		-\innOm{\nabla\cdot(\mathbb{G} + q\mathbb I)}{\bm v} &= \innOm{\bm y-\bm y_d}{\bm v}, \label{TCOASE2}\\
		\innOm{\nabla \cdot \bm z}{ w} &=0, \label{TCOASE3}\\
		\innGa{\gamma  u\bm\tau-\mathbb{G} \bm n}{\mu\bm{\tau}} &= 0, \label{TCOOC}
		\end{align}
		for all $(\mathbb T, \bm v, w, \mu )\in \mathbb H(\textup{div}, \Omega)\times \bm L^2(\Omega)\times L_0^2(\Omega)\times H^{1/2}(\Gamma)$.
	\end{subequations}
\end{corollary}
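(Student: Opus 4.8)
The plan is to prove the corollary in two stages: first, that the solution furnished by \Cref{T2.8}, together with its regularity, solves the reformulated system \eqref{TCOSE1}--\eqref{TCOOC} in the weaker spaces listed (existence); and second, that this reformulated problem admits only one solution (uniqueness), which is where the word \emph{well-posed} does its real work. Observe that \Cref{T2.8} already supplies the minimal regularity needed even to pose \eqref{TCOSE1}--\eqref{TCOOC} with genuine $L^2$ objects: since $s>1/2$ and $r>1$ there, we have $\mathbb L\in\mathbb H^{s-1/2}(\Omega)\subset\mathbb L^2(\Omega)$, $p\in L^2_0(\Omega)$, $\mathbb G\in\mathbb H^r(\Omega)\subset\mathbb L^2(\Omega)$, $q\in L^2_0(\Omega)$, while $\bm y\in\bm V^{s+1/2}(\Omega)\subset\bm V^1(\Omega)$ and $\bm z\in\bm V_0^{r+1}(\Omega)\subset\bm V_0^1(\Omega)$.

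For existence I would read the $\mathbb H(\textup{div},\Omega)$ memberships directly off the equations of \Cref{T2.8}. Since $\innOm{\mathbb L}{\nabla\bm v}-\innOm{p}{\nabla\cdot\bm v}=\innOm{\mathbb L-p\mathbb I}{\nabla\bm v}$, testing \eqref{TCOSE11} with $\bm v\in\bm H_0^1(\Omega)$ shows the distributional divergence of $\mathbb L-p\mathbb I$ vanishes; as $0\in\bm L^2(\Omega)$ this gives $\mathbb L-p\mathbb I\in\mathbb H(\textup{div},\Omega)$ and makes \eqref{TCOSE2} immediate. Likewise \eqref{TCOASE11} yields $\nabla\cdot(\mathbb G+q\mathbb I)=-(\bm y-\bm y_d)\in\bm L^2(\Omega)$, hence $\mathbb G+q\mathbb I\in\mathbb H(\textup{div},\Omega)$ and \eqref{TCOASE2}. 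Equations \eqref{TCOSE3} and \eqref{TCOASE3} are automatic because $\bm y\in\bm V^1(\Omega)$ and $\bm z\in\bm V_0^1(\Omega)$ are divergence free; \eqref{TCOSE1} and \eqref{TCOASE1} coincide verbatim with \eqref{TCOSE13} and \eqref{TCOASE4}; and \eqref{TCOOC} follows from \eqref{1TCOOC} by restricting the test space from $L^2(\Gamma)$ to $H^{1/2}(\Gamma)$. The only point needing care is that for the genuine solution $\mathbb G\bm n=\partial_{\bm n}\bm z$ is an honest $L^2(\Gamma)$ function, by the trace regularity $\mathbb G\bm n\in\bm H^{r-1/2}(\Gamma)$ established in \Cref{T2.8}, so \eqref{TCOOC} really is an $L^2(\Gamma)$ identity here.

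For uniqueness I would work entirely within the weaker spaces. Let $(\delta u,\delta\bm y,\delta\mathbb L,\delta p,\delta\bm z,\delta\mathbb G,\delta q)$ denote the difference of two solutions; it satisfies the homogeneous system (the $\bm y_d$ term cancels). Taking $\mathbb T$ of compact support in \eqref{TCOSE1} and \eqref{TCOASE1} identifies $\delta\mathbb L=\nabla\delta\bm y$ and $\delta\mathbb G=\nabla\delta\bm z$, so both are trace free (gradients of divergence-free fields). The energy identity then comes from testing \eqref{TCOSE1} with $\mathbb T=\delta\mathbb G+\delta q\mathbb I$ and \eqref{TCOASE1} with $\mathbb T=\delta\mathbb L-\delta p\mathbb I$: the pressure contractions $\innOm{\delta\mathbb L}{\delta q\mathbb I}$ and $\innOm{\delta\mathbb G}{\delta p\mathbb I}$ vanish by trace-freeness, $\innOm{\delta\bm z}{\nabla\cdot(\delta\mathbb L-\delta p\mathbb I)}$ vanishes by \eqref{TCOSE2}, and $\innOm{\delta\bm y}{\nabla\cdot(\delta\mathbb G+\delta q\mathbb I)}=-\norm{\delta\bm y}_{0,\Omega}^2$ by \eqref{TCOASE2}. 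These combine to $\innGa{\delta u\bm\tau}{(\delta\mathbb G+\delta q\mathbb I)\bm n}=-\norm{\delta\bm y}_{0,\Omega}^2$. On the other hand, testing the optimality condition \eqref{TCOOC} with $\mu=\delta u\in H^{1/2}(\Gamma)$ and using $\bm\tau\cdot\bm n=0$ to replace $\delta\mathbb G\bm n$ by $(\delta\mathbb G+\delta q\mathbb I)\bm n$ gives $\gamma\norm{\delta u}_{0,\Gamma}^2=\innGa{(\delta\mathbb G+\delta q\mathbb I)\bm n}{\delta u\bm\tau}$. Adding the two identities produces $\gamma\norm{\delta u}_{0,\Gamma}^2+\norm{\delta\bm y}_{0,\Omega}^2=0$, and since $\gamma>0$ we get $\delta u=0$ and $\delta\bm y=0$. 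Feeding $\delta\bm y=0$ into the state block gives $\delta\mathbb L=0$, then $\nabla\delta p=0$ with $\delta p\in L^2_0(\Omega)$, so $\delta p=0$; the homogeneous adjoint Stokes system with zero source then forces $\delta\bm z=0$, $\delta\mathbb G=0$, $\delta q=0$.

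The main obstacle is making the boundary pairings rigorous in the weak spaces. When $\mathbb G\in\mathbb L^2(\Omega)$ only, $\mathbb G\bm n$ is not a trace; the well-defined object is the $\mathbb H(\textup{div},\Omega)$ normal trace $(\delta\mathbb G+\delta q\mathbb I)\bm n\in\bm H^{-1/2}(\Gamma)$, and the step $\innGa{\delta\mathbb G\bm n}{\mu\bm\tau}=\innGa{(\delta\mathbb G+\delta q\mathbb I)\bm n}{\mu\bm\tau}$ must be read as a duality statement justified by tangentiality. This is precisely why the test space for $\mu$ is $H^{1/2}(\Gamma)$ rather than $L^2(\Gamma)$: one needs $\mu\bm\tau$ to live in the trace space dual to $\bm H^{-1/2}(\Gamma)$, and the admissibility of $\mu=\delta u$ in the uniqueness step rests on the corner compatibility of $\delta u\bm\tau$, exactly the subtle interplay between global and edgewise regularity of the control emphasized after \Cref{T2.8}. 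An alternative that sidesteps the energy identity is to verify, by integration by parts, that any solution of \eqref{TCOSE1}--\eqref{TCOOC} also solves the optimality system \eqref{TCOSE11}--\eqref{1TCOOC}, whose uniqueness is inherited from the strict convexity of $(P_{\bm\tau})$; I would keep this in reserve as a cross-check.
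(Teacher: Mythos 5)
Your proposal is correct, and it actually supplies more than the paper does: the paper states this corollary without any proof, treating it as an immediate reformulation of \Cref{T2.8}. Your existence step is exactly that implicit argument — the embeddings $\mathbb H^{s-1/2}(\Omega)\subset\mathbb L^2(\Omega)$, $\bm V^{s+1/2}(\Omega)\subset\bm V^1(\Omega)$, etc.\ from the regularity in \Cref{T2.8}, the identification of the distributional divergences $\nabla\cdot(\mathbb L-p\mathbb I)=0$ and $-\nabla\cdot(\mathbb G+q\mathbb I)=\bm y-\bm y_d$ from \eqref{TCOSE11} and \eqref{TCOASE11} (which is what licenses the $\mathbb H(\textup{div},\Omega)$ memberships and converts the velocity-gradient tests into the $\bm L^2(\Omega)$ tests of \eqref{TCOSE2}, \eqref{TCOASE2}), and the observation that \eqref{TCOSE1}, \eqref{TCOASE1} coincide verbatim with \eqref{TCOSE13}, \eqref{TCOASE4}. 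Your uniqueness argument is new relative to the paper and is a nice continuous analogue of the paper's \emph{discrete} uniqueness proof in \Cref{ex_uni}: the same cross-testing of the state block against adjoint variables and vice versa, with the trace-free structure of $\nabla\delta\bm y$, $\nabla\delta\bm z$ killing the pressure couplings, yielding $\gamma\|\delta u\|_{0,\Gamma}^2+\|\delta\bm y\|_{0,\Omega}^2=0$. All the individual identities check out, including the tangentiality trick $\innGa{\delta q\,\bm n}{\delta u\bm\tau}=0$ that lets you pass between $\delta\mathbb G\bm n$ and the well-defined normal trace $(\delta\mathbb G+\delta q\mathbb I)\bm n$.

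One caveat, which you yourself half-identify: the energy-identity route to uniqueness requires the boundary terms to be honest $\bm H^{-1/2}(\Gamma)$--$\bm H^{1/2}(\Gamma)$ dualities, and hence requires $\delta u\bm\tau\in\bm H^{1/2}(\Gamma)$. This is \emph{not} implied by $\delta u\in H^{1/2}(\Gamma)$ alone: since $\bm\tau$ jumps at the corners, $u\bm\tau\in\bm H^{1/2}(\Gamma)$ imposes an integral vanishing condition on $u$ at the corners (the compatibility of \cite[Theorem 1.5.2.8]{MR775683}), which the true solution satisfies by the proof of \Cref{T2.8} (there $u\bm\tau\in\bm V^s(\Gamma)$, $s>1/2$) but which a hypothetical competing solution in the corollary's spaces need not. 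The same issue already affects the meaning of \eqref{TCOSE1} for arbitrary $\mathbb T\in\mathbb H(\textup{div},\Omega)$, so it is best regarded as part of how the weak formulation is to be read, but it does mean your energy argument proves uniqueness only within the class of solutions for which these pairings are defined. For full rigor I would promote your reserve argument to the main one: integrate by parts to show any solution of \eqref{TCOSE1}--\eqref{TCOOC} solves the system of \Cref{T2.8}, whose solution is unique by the strict convexity of $(P_{\bm\tau})$ together with \Cref{T2.4}; your energy identity then serves as the quantitative stability statement behind the word \emph{well-posed}.
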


\section{HDG Formulation }
\label{sec:HDG_formulation}

Before we introduce the HDG method, we first define some notation.  Let $\mathcal{T}_h$ be a conforming and quasi-uniform collection of disjoint elements that partition $\Omega$.  We denote by $\partial \mathcal{T}_h$ the set $\{\partial K: K\in \mathcal{T}_h\}$. For an element $K$ of the collection  $\mathcal{T}_h$, $e = \partial K \cap \Gamma$ is the boundary face if the Lebesgue measure of $e$ is non-zero. For two elements $K^+$ and $K^-$ of the collection $\mathcal{T}_h$, $e = \partial K^+ \cap \partial K^-$ is the interior face between $K^+$ and $K^-$ if the  Lebesgue measure of $e$ is non-zero. Let $\varepsilon_h^o$ and $\varepsilon_h^{\partial}$ denote the set of interior and boundary faces, respectively. We denote by $\varepsilon_h$ the union of  $\varepsilon_h^o$ and $\varepsilon_h^{\partial}$. We  introduce various inner products for our finite element spaces. We write
\begin{gather*}
(\eta,\zeta)_{\mathcal{T}_h} = \sum_{K\in\mathcal{T}_h} (\eta,\zeta)_K,  \quad  \langle \eta,\zeta\rangle_{\partial\mathcal{T}_h} = \sum_{K\in\mathcal T_h} \langle \eta,\zeta\rangle_{\partial K},\\
(\bm\eta,\bm\zeta)_{\mathcal{T}_h} = \sum_{i=1}^2 (\eta_i,\zeta_i)_{\mathcal T_h},  \quad 	(\mathbb L,\mathbb G)_{\mathcal{T}_h} = \sum_{i,j=1}^2 (L_{ij},G_{ij})_{\mathcal T_h},\\
\langle \bm \eta,\bm\zeta\rangle_{\partial\mathcal{T}_h} = \sum_{i=1}^2 \langle \eta_i,\zeta_i\rangle_{\partial \mathcal T_h},
\end{gather*}
where $ (\cdot,\cdot)_K$ and $ \langle \cdot,\cdot\rangle_{\partial K} $ denote the standard $ L^2 $ inner products on the domains $K\subset\mathbb R^2$ and $\partial K\subset\mathbb R$.
%
%
%

Let $\mathcal{P}^k(D)$ denote the set of polynomials of degree at most $k$ on a domain $D$.  We introduce the following discontinuous finite element spaces
\begin{align}
\mathbb K_h&:=\{\mathbb L \in \mathbb L^2(\Omega):\mathbb L|_K\in[\mathcal P^k(K)]^{2\times 2}, \ \forall K\in\mathcal T_h\},\\
\bm{V}_h  &:= \{\bm{v}\in \bm L^2(\Omega): \bm{v}|_{K}\in [\mathcal{P}^{k+1}(K)]^2, \   \forall K\in \mathcal{T}_h\},\\
{W}_h  &:= \{{w}\in L^2(\Omega): {w}|_{K}\in \mathcal{P}^k(K),  \ \forall K\in \mathcal{T}_h\},\\
\bm {M}_h  &:= \{{\mu}\in \bm L^2(\mathcal{\varepsilon}_h): {\bm\mu}|_{e}\in [\mathcal{P}^k(e)]^2, \  \forall e\in \varepsilon_h\},\\
{M}_h  &:= \{{\mu}\in L^2(\mathcal{\varepsilon}_h^\partial): {\mu}|_{e}\in \mathcal{P}^k(e), \  \forall e\in \varepsilon_h^\partial\},
\end{align}
for the flux variables, velocity, pressure, boundary trace variables, and boundary control, respectively.  Note that the polynomial degree for the scalar variable is one order higher than the polynomial degree for the flux variables and numerical trace.  This combination of spaces has been used for the Navier-Stokes equations in \cite{MR3556409}.  The boundary trace variables will be used to eliminate the state and flux variables from the coupled global equations, thus substantially reducing the number of degrees of freedom.

Let  $\bm M_h(o)$ denote the space defined in the same way as $\bm M_h$, but with $\varepsilon_h$ replaced by $\varepsilon_h^o$. Note that $\bm M_h$ consists of functions which are continuous inside the faces (or edges) $e\in \varepsilon_h$ and discontinuous at their borders. In addition, spatial derivatives of any functions in the finite element spaces are taken piecewise on each element $K\in \mathcal T_h$. Finally, we define
\begin{align*}
{W}_h^0  &= \left\{{w}\in L^2(\Omega): {w}|_{K}\in \mathcal{P}^k(K), \  \forall K\in \mathcal{T}_h \ \textup{and}\  (w,1)_{\Omega}  = 0\right\}.
%
%
%
\end{align*}

To approximate the solution of the mixed weak form \eqref{TCOSE1}-\eqref{TCOOC} of the optimality system, the HDG method seeks approximate fluxes $\mathbb L_h,\mathbb G_h \in \mathbb {K}_h $, states $ \bm y_h, \bm z_h \in \bm V_h $, pressures  $p_h,q_h \in W_h^0$,  interior element boundary traces $ \widehat{\bm y}_h^o,\widehat{\bm z}_h^o \in \bm M_h(o) $, and boundary control $ u_h\in  M_h$ satisfying
\begin{subequations}\label{HDG_discrete2}
	\begin{align}
	(\mathbb L_h,\mathbb T_1)_{\mathcal{T}_h}+(\bm y_h,\nabla\cdot \mathbb T_1)_{\mathcal{T}_h}-\langle \widehat{\bm y}_h^o, \mathbb T_1 \bm{n}\rangle_{\partial\mathcal{T}_h\backslash\varepsilon_h^\partial} &=\langle  u_h\bm \tau, \mathbb T_1 \bm{n}\rangle_{\varepsilon_h^\partial}, \label{HDG_discrete2_a}\\
	(\mathbb L_h,\nabla\bm v_1)_{\mathcal T_h} - (p_h, \nabla\cdot \bm{v}_1)_{\mathcal{T}_h} - \langle(\widehat{\mathbb L}_h-  p_h \mathbb I)\bm n, \bm v_1\rangle_{\partial {\mathcal{T}_h}} &= (\bm f,\bm{v}_1)_{\mathcal{T}_h},  \label{HDG_discrete2_b}\\
	-(\bm y_h,\nabla w_1)_{\mathcal T_h}+\langle \widehat{\bm y}_h^o\cdot\bm n,w_1\rangle_{\partial\mathcal T_h\backslash {\varepsilon_h^\partial}}&= 0,\label{HDG_discrete2_c}
	\end{align}
	for all $(\mathbb T_1,\bm v_1,w_1)\in \mathbb K_h\times\bm{V}_h\times W_h^0$,
	\begin{align}
	(\mathbb G_h,\mathbb T_2)_{\mathcal{T}_h}+(\bm z_h,\nabla\cdot\mathbb T_2)_{\mathcal{T}_h}-\langle \widehat{\bm z}_h^o, \mathbb T_2 \bm{n}\rangle_{\partial\mathcal{T}_h\backslash \varepsilon_h^\partial} &=0, \label{HDG_discrete2_d}\\
	(\mathbb G_h,\nabla\bm v_2)_{\mathcal T_h}+(q_h, \nabla\cdot \bm{v}_2)_{\mathcal{T}_h}- \langle(\widehat{\mathbb G}_h+ q_h\mathbb I)\bm n, \bm v_2\rangle_{\partial {\mathcal{T}_h}} &= (\bm y_h-\bm y_d,\bm{v}_2)_{\mathcal{T}_h},  \label{HDG_discrete2_e}\\
	-(\bm z_h,\nabla w_2)_{\mathcal T_h}+\langle \widehat{\bm z}_h^o\cdot\bm n,w_2\rangle_{\partial\mathcal T_h\backslash\varepsilon_h^\partial}&=0,\label{HDG_discrete2_f}
	\end{align}
	for all $(\mathbb T_2,\bm v_2,w_2)\in \mathbb K_h\times\bm{V}_h\times W_h^0$,
	\begin{align}
	\langle(\widehat{\mathbb L}_h- p_h\mathbb I)\bm n,\bm\mu_1\rangle_{\partial\mathcal T_h\backslash\varepsilon^{\partial}_h}&=0,\label{HDG_discrete2_g}
	\end{align}
	for all $\bm\mu_1\in \bm M_h(o)$,
	\begin{align}
	\langle(\widehat{\mathbb G}_h+ q_h\mathbb I)\bm n,\bm\mu_2\rangle_{\partial\mathcal T_h\backslash\varepsilon^{\partial}_h}&=0,\label{HDG_discrete2_h}
	\end{align}
	for all $\bm\mu_2\in \bm M_h(o)$,
	\begin{align}
	\langle \widehat{\mathbb G}_h  \bm n - \gamma  u_h \bm{\tau},  \mu_3 \bm \tau \rangle_{\varepsilon_h^\partial} &= 0,\label{HDG_discrete2_m}
	\end{align}
	for all $\mu_3 \in  M_h $.  In contrast to \Cref{Analysis_D_S}, here we assume the forcing $ \bm f $ may be nonzero.

	The numerical traces on $\partial\mathcal{T}_h$ are defined as
	\begin{align}
	\widehat{\mathbb L}_h\bm n &= \mathbb L_h\bm n-h^{-1}(\bm P_M \bm y_h-\widehat{\bm y}_h^o)  \quad \ \  \text{on}\ \partial \mathcal{T}_h\backslash \varepsilon_h^\partial, \label{HDG_discrete2_n}\\
	\widehat{\mathbb L}_h\bm n&= \mathbb L_h\bm n-h^{-1}(\bm P_M\bm y_h- u_h\bm \tau)  \quad  \text{on}  \ \varepsilon_h^\partial, \label{HDG_discrete2_o}\\
	\widehat{\mathbb G}_h\bm n&= \mathbb G_h\bm n-h^{-1}(\bm P_M\bm z_h-\widehat{\bm z}_h^o)  \quad \ \  \text{on}\ \partial \mathcal{T}_h\backslash \varepsilon_h^\partial,\label{HDG_discrete2_p}\\
	\widehat{\mathbb G}_h\bm n&= \mathbb G_h\bm n-h^{-1} \bm P_M\bm z_h  \qquad\qquad \ \   \text{on} \   \varepsilon_h^\partial,\label{HDG_discrete2_q}
	\end{align}
\end{subequations}
where $\bm P_M$ denotes the standard edge-wise $L^2$-orthogonal projection from $\bm L^2(e)$ onto $\mathcal{P}^k(e)$. This completes the formulation of the HDG method. It is straightforward to see that the system \eqref{HDG_discrete2} is consistent, i.e., the exact solution satisfies the system.
Furthermore, its implementation can be found in the arXiv preprint of this paper \cite{GongHuMateosSinglerZhang1}.

\section{Error analysis}
\label{sec:analysis}
In this section, we perform a convergence analysis of the HDG method for the tangential Dirichlet boundary control for Stokes equations.


	\subsection{Main result}
	We assume throughout that there exists a unique solution of the optimality system \eqref{TCOSE1}-\eqref{TCOOC} satisfying the following  regularity condition:
	\begin{align*}
	\mathbb L &\in \mathbb H^{r_{\mathbb L}}(\Omega), &  \bm y &\in \bm H^{r_{\bm y}}(\Omega), &  p &\in H^{r_p}(\Omega),\\
	\mathbb G &\in \mathbb H^{r_{\mathbb G}}(\Omega), &  \bm z &\in \bm H^{r_{\bm z}}(\Omega), &  q &\in H^{r_q}(\Omega),
	\end{align*}
	where
	
	\begin{align}\label{reg_assumption}
	r_{\bm y}>1, \  \ r_{\bm z}>2,  \ \ r_{\mathbb L}= r_p>0,  \ \ r_{\mathbb G}>1,  \ \ r_q>1, \ \   \mathbb L - p \mathbb I \in \mathbb H(\textup{div},\Omega).
	\end{align}

	This regularity condition is guaranteed to hold when the polygonal domain is convex; see \Cref{cor:main_result} below.
	
	In \eqref{reg_assumption}, the regularity for $\mathbb L$ and $p$ can be very low; in particular, $\mathbb L$ and $p$ are not guaranteed to have an $L^2$ boundary trace.  This causes difficulty for the numerical analysis of the HDG method.  For the convection diffusion equation, we used a special interpolation operator to deal with this difficulty in \cite{MR3831243,MR3992054}.  Later, in \cite{ChenFuSinglerZhang}, we used a special trace inequality in the numerical analysis of related embedded DG methods; we also use an improved trace inequality in this work, but the analysis is different since the spaces are not the same as in \cite{ChenFuSinglerZhang}.

	Our main result is below:
	\begin{theorem}\label{main_res}
		Let
		\begin{align*}
		s_{\mathbb{L}} &= \min\{r_{\mathbb L}, k+1 \},   \qquad 	s_{\bm y} = \min\{r_{\bm y}, k+2 \}, \qquad s_{p} = \min\{r_p, k+1 \},\\
		s_{\mathbb{G}} &= \min\{r_{\mathbb G}, k+1 \},   \qquad	s_{\bm z} = \min\{r_{\bm z}, k+2 \},  \qquad s_{q} = \min\{r_q, k+1 \},\\
		\mathcal M &= h^{s_{\mathbb L}}\norm{\mathbb L}_{s^{\mathbb L},\Omega}+  h^{s_{p}}\norm{p}_{s^{p},\Omega} + h^{s_{\bm y}-1}\norm{\bm y}_{s^{\bm y},\Omega}, \\
		\mathcal N &=  h^{s_{\mathbb G}}\norm{\mathbb G}_{s^{\mathbb G},\Omega}+  h^{s_{q}}\norm{q}_{s^{q},\Omega}+ h^{s_{\bm z}-1}\norm{\bm z}_{s^{\bm z},\Omega}.
		\end{align*}
		Then for $k\geq 0$, we have
		\begin{align*}
		\norm {u - u_h}_{\varepsilon^\partial_h} + \norm {\bm y -\bm y_h}_{\mathcal T_h} + \norm {\mathbb G - \mathbb G_h}_{\mathcal T_h} + \norm {\bm z-\bm z_h}_{\mathcal T_h} +\norm {q - q_h}_{\mathcal T_h} \lesssim h^{-\frac 1 2 }(h\mathcal M + \mathcal N).
		\end{align*}
		Moreover, if  $k\ge 1$, then
		\begin{align*}
		%
		%
		%
		\hspace{1em}&\hspace{-1em}\norm {\mathbb L - \mathbb L_h}_{\mathcal T_h} +\norm {p - p_h}_{\mathcal T_h} \lesssim 	h^{-1}(h\mathcal M + \mathcal N).
		\end{align*}
	\end{theorem}
	\begin{corollary}\label{cor:main_result}
		Let $\omega\in[\pi/3, \pi)$ be the largest interior angle of $\Gamma$ and let $\xi$
		be  the smallest real part of all of the roots $ \lambda $ of the equation
		\begin{align*}
		\frac{ \sin^2(\lambda\omega)-\lambda^2\sin^2\omega }{ \lambda^2 (\lambda - 1) }=0.
		\end{align*}
		Suppose $ \bm f = \bm 0 $, $ \bm y_d \in \bm H^{\min\{2,\xi\}}(\Omega) $.  Define $ r_{\Omega}$ by
		\[
		r_{\Omega} = \min\left\{ \frac{3}{2}, \xi- \frac 1 2 \right\} \in (1/2, 3/2].
		\]
		Then the regularity condition \eqref{reg_assumption}is satisfied.

		If $ k = 1 $, then for any $ r < r_\Omega $ we have
		\begin{align*}
		\|\mathbb L - \mathbb L_h\|_{\mathcal T_h} &\lesssim h^{r-1/2}, & 	\|\bm y  - \bm y_h\|_{\mathcal T_h} &\lesssim h^{r}, & 	\|p  - p_h\|_{\mathcal T_h} &\lesssim h^{r-1/2},\\
		\|\mathbb G - \mathbb G_h\|_{\mathcal T_h} &\lesssim h^{r}, & 	\|\bm z - \bm z_h\|_{\mathcal T_h} &\lesssim h^{r}, & 	\|q  - q_h\|_{\mathcal T_h} &\lesssim h^{r},\\
		& & \norm{u-u_h}_{\varepsilon_h^\partial} &\lesssim h^{r}. & &
		\end{align*}
		
		If $ k = 0$, then for any $ r < r_\Omega $ we have
		\begin{gather*}
		\|\bm y  - \bm y_h\|_{\mathcal T_h} \lesssim h^{1/2}, \qquad 	\|\mathbb G - \mathbb G_h\|_{\mathcal T_h} \lesssim h^{1/2},\\
		\|\bm z - \bm z_h\|_{\mathcal T_h}\lesssim h^{1/2},  \qquad 	\|q  - q_h\|_{\mathcal T_h} \lesssim h^{1/2},\\
		\norm{u-u_h}_{\varepsilon_h^\partial} \lesssim h^{1/2}.
		\end{gather*}
	\end{corollary}

\Cref{T2.8} gives $ u \in H^r(\Gamma) $, and so the convergence rate for the control is optimal for $ k = 1 $ with respect to this global regularity result.  However, \Cref{T2.8} also gives the higher \textit{local regularity result} \eqref{eqn:opt_control_local_reg} for the control: $ u \in H^\kappa(\Gamma_i) $ for each boundary segment $ \Gamma_i $, where $ \kappa < \min\{3,\xi\}-1/2 $.  Our numerical results in \Cref{sec:numerics} indicate that the actual convergence rate for $ k = 1 $ may indeed be restricted by the local regularity result instead of the global regularity result.  A completely different method of proof is likely required to establish a sharper convergence rate for the control with respect to the local regularity result.

Also, \Cref{T2.8} only yields global regularity results for the other variables.  Our convergence rates for the flux $\mathbb L $ and pressure $p$ are optimal for $ k = 1 $, but suboptimal for the other variables.

\subsection{Preliminary material}
We begin by defining the standard $L^2$ projections $\bm \Pi _{\mathbb K} : \mathbb L^2(\Omega)  \to \mathbb {K}_h$, $\bm \Pi_V :  \bm L^2(\Omega) \to \bm{V} _h$, and $\Pi_W :  L^2(\Omega) \to W_h$ satisfying
\begin{subequations}\label{projection_operator}
	\begin{align}
	(\bm{\Pi}_{\mathbb K}\mathbb L,\mathbb T)_K&=(\mathbb L,\mathbb T)_K  & \forall \ \mathbb T &\in[\mathcal P^{k}(K)]^{2\times 2},\label{projection_operator_a}\\
	(\bm\Pi_V\bm y,\bm v)_K&=(\bm y,\bm v)_K & \forall  \ \bm v &\in[\mathcal P^{k+1}(K)]^2,\label{projection_operator_b}\\
	(\Pi_Wp, w)_K&=(p, w)_K & \forall  \ w&\in \mathcal P^{k}(K).\label{projection_operator_c}
	\end{align}
\end{subequations}
For all faces $e$ of the simplex $K$, we also need the edge-wise $L^2$-orthogonal projections that map into $ \mathcal P^k(e)$ and $ [\mathcal P^k(e)]^2$, respectively:
\begin{subequations}\label{def_P_M}
	\begin{align}
	\langle  P_M  u- u,  \mu\rangle_e &= 0, \quad \forall \mu\in  \mathcal P^k(e),\\
	\langle \bm P_M \bm y-\bm y, \bm \mu\rangle_e &= 0, \quad \forall \bm\mu\in  [\mathcal P^k(e)]^2.
	\end{align}
\end{subequations}
In the analysis, we use the following classical results:
\begin{subequations}\label{standard_L2_projection}
	\begin{align}
	&\|\bm\Pi_{\mathbb K}\mathbb L-\mathbb L\|_{\mathcal T_h}\lesssim h^{s_{\mathbb L}} \|\mathbb L\|_{{s_{\mathbb L}},\Omega}, & 		&\|\bm\Pi_{\bm V}\bm y -\bm y\|_{\mathcal T_h} \lesssim h^{s_{\bm y}} \|\bm y\|_{{s_{\bm y}},\Omega},\\
	&\| P_M  u - u\|_{\partial \mathcal T_h} \lesssim h^{s_{\bm y}-\frac 1 2} \|\bm  y\|_{{s_{\bm y}},\Omega}, & 		&\|\bm\Pi_{\bm V}\bm y -\bm y\|_{\partial \mathcal T_h} \lesssim h^{s_{\bm y}-\frac 1 2} \|\bm y\|_{{s_{\bm y}},\Omega},\\
	&\|\Pi_W p -p\|_{\mathcal T_h}\lesssim h^{s_{p}} \|p\|_{{s_p},\Omega}, & 		&\|\bm P_M \bm y -\bm y\|_{\partial \mathcal T_h} \lesssim h^{s_{\bm y}-\frac 1 2} \|\bm y\|_{{s_{\bm y}},\Omega}.
	\end{align}
\end{subequations}
Similar projection error bounds hold for $\mathbb G$, $\bm z$ and $q$.

Define the HDG operator $\mathscr B: \mathbb K_h\times \bm  V_h \times W_h^0\times \bm M_h\times \mathbb K_h\times \bm  V_h \times W_h^0\times \bm M_h\to \mathbb R $ by
\begin{align}\label{def_B}
\begin{split}
\hspace{1em}&\hspace{-1em}  \mathscr B(\mathbb L_h,\bm y_h,p_h,\widehat{\bm y}_h^o; \mathbb T_1,\bm v_1, w_1,\bm \mu_1)\\
&=(\mathbb L_h,\mathbb T_1)_{\mathcal{T}_h}+(\bm y_h,\nabla\cdot\mathbb T_1)_{\mathcal{T}_h}-\langle \widehat{\bm y}_h^o, \mathbb T_1 \bm{n}\rangle_{\partial\mathcal{T}_h\backslash\varepsilon_h^\partial}\\
&\quad +(\mathbb L_h,\nabla\bm v_1)_{\mathcal T_h}- (p_h, \nabla\cdot \bm{v}_1)_{\mathcal{T}_h}-\langle \mathbb L_h\bm n-p_h\bm n-h^{-1}\bm P_M \bm y_h, \bm v_1\rangle_{\partial {\mathcal{T}_h}}\\
&\quad - \langle h^{-1} \widehat {\bm y}_h^o, \bm v_1\rangle_{\partial\mathcal T_h\backslash\varepsilon_h^\partial} -(\bm y_h,\nabla w_1)_{\mathcal T_h}+\langle \widehat{\bm y}_h^o\cdot\bm n,w_1\rangle_{\partial\mathcal T_h\backslash\varepsilon_h^\partial}\\
&\quad+\langle \mathbb L_h\bm n-p_h\bm n-h^{-1}(\bm P_M \bm y_h-\widehat{\bm y}_h^o), \bm \mu_1\rangle_{\partial {\mathcal{T}_h}\backslash\varepsilon_h^\partial}.
\end{split}
\end{align}
This definition allows us to rewrite the HDG formulation of the optimality system \eqref{HDG_discrete2}: find $(\mathbb L_h,\mathbb G_h,\bm y_h,\bm z_h, \\ p_h,q_h,\widehat {\bm y}_h^o,\widehat {\bm z}_h^o,\bm u_h)\in \mathbb K_h\times\mathbb K_h\times\bm{V}_h\times \bm{V}_h \times W_h^0 \times W_h^0\times \bm M_h(o)\times \bm M_h(o)\times M_h$ satisfying
\begin{subequations}\label{HDG_full_discrete}
	\begin{align}
	&\mathscr B(\mathbb L_h,\bm y_h,p_h,\widehat{\bm y}_h^o;\mathbb T_1,\bm v_1, w_1,\bm \mu_1)=\langle  u_h\bm \tau, \mathbb T_1 \bm{n} +h^{-1}\bm v_1\rangle_{\varepsilon_h^\partial}+(\bm f,\bm v_1)_{\mathcal T_h},\label{HDG_full_discrete_a}\\
	&\mathscr B(\mathbb G_h,\bm z_h,-q_h,\widehat{\bm z}_h^o;\mathbb T_2,\bm v_2, w_2,\bm \mu_2)=(\bm y_h - \bm y_d,\bm v_2)_{\mathcal T_h},\label{HDG_full_discrete_b}\\
	&\langle \mathbb G_h\bm n-h^{-1} \bm P_M\bm z_h,  \mu_3\bm \tau \rangle_{\varepsilon_h^\partial} =\gamma \langle   u_h,  \mu_3 \rangle_{\varepsilon_h^\partial},\label{HDG_full_discrete_e}
	\end{align}
\end{subequations}
for all $\left(\mathbb T_1,\mathbb T_2, \bm v_1,\bm v_2, w_1,w_2,\bm\mu_1,\bm{\mu}_2, \mu_3\right)\in \mathbb K_h\times\mathbb K_h\times\bm{V}_h\times \bm{V}_h \times W_h \times W_h\times \bm M_h(o)\times \bm M_h(o)\times  M_h$.  Also, in the error analysis we frequently use the following identity, which is established by applying integration by parts to \eqref{def_B}:
 \begin{align}\label{def_B2}
 \begin{split}
 \hspace{1em}&\hspace{-1em}  \mathscr B(\mathbb L_h,\bm y_h,p_h,\widehat{\bm y}_h^o; \mathbb T_1,\bm v_1, w_1,\bm \mu_1)\\
 &=(\mathbb L_h,\mathbb T_1)_{\mathcal{T}_h}+(\bm y_h, \nabla\cdot\mathbb T_1)_{\mathcal{T}_h}-\langle \widehat{\bm y}_h^o, \mathbb T_1 \bm{n}\rangle_{\partial\mathcal{T}_h\backslash\varepsilon_h^\partial}\\
 &\quad -(\nabla\cdot\mathbb L_h,\bm v_1)_{\mathcal T_h}+ (\nabla p_h, \bm{v}_1)_{\mathcal{T}_h}+\langle  h^{-1}\bm P_M \bm y_h, \bm v_1\rangle_{\partial {\mathcal{T}_h}}\\
 &\quad - \langle h^{-1} \widehat {\bm y}_h^o, \bm v_1\rangle_{\partial\mathcal T_h\backslash\varepsilon_h^\partial} -(\bm y_h, \nabla w_1)_{\mathcal T_h}+\langle  \widehat{\bm y}_h^o\cdot\bm n,w_1\rangle_{\partial\mathcal T_h\backslash\varepsilon_h^\partial}\\
 &\quad+\langle \mathbb L_h\bm n-p_h\bm n-h^{-1}(\bm P_M \bm y_h-\widehat{\bm y}_h^o), \bm \mu_1\rangle_{\partial {\mathcal{T}_h}\backslash\varepsilon_h^\partial}.
 \end{split}
 \end{align}

The detailed proofs of the following three lemmas can be found in the arXiv preprint of this paper; see \cite{GongHuMateosSinglerZhang1}.
\begin{lemma}\label{energy_norm}
	For any $ ( \mathbb T_h, \bm v_h, w_h, \bm \mu_h ) \in \mathbb K_h\times\bm V_h \times W_h^0 \times \bm M_h $, we have
	\begin{align*}
	\hspace{1em}&\hspace{-1em}  \mathscr B ( \mathbb T_h, \bm v_h, w_h, \bm \mu_h; \mathbb T_h, \bm v_h, w_h, \bm \mu_h) \\
	&= (\mathbb T_h,\mathbb T_h)_{\mathcal{T}_h} + \langle h^{-1}(\bm P_M \bm v_h-\bm \mu_h), \bm P_M \bm v_h - \bm \mu_h\rangle_{\partial {\mathcal{T}_h}\backslash\varepsilon_h^\partial} + \langle h^{-1}\bm P_M \bm v_h, \bm P_M\bm v_h \rangle_{\varepsilon_h^\partial}.
	\end{align*}
\end{lemma}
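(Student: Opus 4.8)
The plan is to substitute the diagonal arguments $(\mathbb T_h,\bm v_h,w_h,\bm\mu_h)$ into both slots of the form \eqref{def_B} and then collapse the resulting terms by element-wise integration by parts together with the defining orthogonality of $\bm P_M$ in \eqref{def_P_M}. After the substitution the volume contributions are $(\mathbb T_h,\mathbb T_h)_{\mathcal T_h}$, the pair $(\bm v_h,\nabla\cdot\mathbb T_h)_{\mathcal T_h}+(\mathbb T_h,\nabla\bm v_h)_{\mathcal T_h}$, and the pair $-(w_h,\nabla\cdot\bm v_h)_{\mathcal T_h}-(\bm v_h,\nabla w_h)_{\mathcal T_h}$, while every remaining term already lives on the skeleton.

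First I would integrate by parts on each $K$. The tensor--vector Green identity gives $(\bm v_h,\nabla\cdot\mathbb T_h)_K+(\mathbb T_h,\nabla\bm v_h)_K=\langle\mathbb T_h\bm n,\bm v_h\rangle_{\partial K}$, and the scalar--vector identity gives $(w_h,\nabla\cdot\bm v_h)_K+(\bm v_h,\nabla w_h)_K=\langle\bm v_h\cdot\bm n,w_h\rangle_{\partial K}$. Summing over $K\in\mathcal T_h$ converts the two volume pairs into $\langle\mathbb T_h\bm n,\bm v_h\rangle_{\partial\mathcal T_h}$ and $-\langle\bm v_h\cdot\bm n,w_h\rangle_{\partial\mathcal T_h}$, leaving $(\mathbb T_h,\mathbb T_h)_{\mathcal T_h}$ as the only surviving volume term.

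Next I would sort the boundary contributions by the factor they carry. The terms proportional to $\mathbb T_h\bm n$ are the $\langle\mathbb T_h\bm n,\bm v_h\rangle_{\partial\mathcal T_h}$ just produced together with the $-\langle\mathbb T_h\bm n,\bm v_h\rangle_{\partial\mathcal T_h}$ coming from the flux term tested against $\bm v_h$, and the pair $-\langle\bm\mu_h,\mathbb T_h\bm n\rangle$, $+\langle\mathbb T_h\bm n,\bm\mu_h\rangle$ over $\partial\mathcal T_h\backslash\varepsilon_h^\partial$; these cancel in pairs. The identical cancellation holds verbatim for every term carrying $w_h\bm n$. Hence all $\mathbb T_h\bm n$- and $w_h\bm n$-type boundary terms disappear and only the $h^{-1}$ stabilization terms remain.

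The last and most delicate step is to assemble the $h^{-1}$ terms into the two squared quantities claimed. Here I would invoke that $\bm\mu_h\in\bm M_h$ and that $\bm P_M$ is the face-wise $L^2$ projection onto $\bm M_h$, so that on each face $\langle\bm P_M\bm v_h,\bm v_h\rangle=\langle\bm P_M\bm v_h,\bm P_M\bm v_h\rangle$ and $\langle\bm\mu_h,\bm v_h\rangle=\langle\bm\mu_h,\bm P_M\bm v_h\rangle$. Splitting the single full-skeleton term $\langle h^{-1}\bm P_M\bm v_h,\bm v_h\rangle_{\partial\mathcal T_h}$ over $\varepsilon_h^\partial$ and $\partial\mathcal T_h\backslash\varepsilon_h^\partial$, the $\varepsilon_h^\partial$ part is the only stabilization term supported there and at once equals $\langle h^{-1}\bm P_M\bm v_h,\bm P_M\bm v_h\rangle_{\varepsilon_h^\partial}$, while on $\partial\mathcal T_h\backslash\varepsilon_h^\partial$ the four remaining terms, after the projection substitutions, complete the square $\langle h^{-1}(\bm P_M\bm v_h-\bm\mu_h),\bm P_M\bm v_h-\bm\mu_h\rangle_{\partial\mathcal T_h\backslash\varepsilon_h^\partial}$. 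Collecting the three surviving pieces yields the identity. I expect no genuine analytic difficulty: the proof is pure bookkeeping, and the only point requiring care is the interior-skeleton versus boundary split, since several terms of \eqref{def_B} are integrated only over $\partial\mathcal T_h\backslash\varepsilon_h^\partial$. I must therefore track on which portion of the skeleton each cancellation and each projection identity is applied, and in particular verify that the $\varepsilon_h^\partial$ stabilization is untouched by the $\bm\mu_h$-supported terms.
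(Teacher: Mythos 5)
Your proposal is correct and follows essentially the same route as the paper's proof: substitute the diagonal arguments into \eqref{def_B}, integrate by parts element-wise so the $\mathbb T_h\bm n$- and $w_h\bm n$-type skeleton terms cancel in pairs, and use the face-wise orthogonality of $\bm P_M$ to complete the square in the stabilization terms, splitting $\partial\mathcal T_h$ into $\varepsilon_h^\partial$ and $\partial\mathcal T_h\backslash\varepsilon_h^\partial$. The paper merely compresses these steps into a single displayed computation, so your write-up supplies exactly the bookkeeping it leaves implicit.
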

%

Next, we give another property of the HDG operator $\mathscr B$ that is fundamental to our analysis.
\begin{lemma}\label{identical_equa}
	For all $ (\mathbb L_h,\mathbb G_h,\bm y_h,\bm z_h, p_h,q_h,\widehat {\bm y}_h^o,\widehat {\bm z}_h^o)\in \mathbb K_h\times\mathbb K_h\times\bm{V}_h\times \bm{V}_h \times W_h^0 \times W_h^0\times \bm M_h(o)\times \bm M_h(o) $, we have
	\begin{align*}
	\mathscr B ( \mathbb L_h, \bm y_h, p_h, \widehat {\bm y}_h^o; -\mathbb G_h, \bm z_h, q_h, \widehat {\bm z}_h^o)+ \mathscr B (\mathbb G_h, \bm z_h, -q_h, \widehat {\bm z}_h^o;\mathbb L_h, -\bm y_h, p_h, -\widehat {\bm y}_h^o) = 0.
	\end{align*}
\end{lemma}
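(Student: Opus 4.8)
The plan is to prove this identity by direct computation, since it is purely algebraic: it asserts nothing about the discrete solution and must hold for arbitrary finite element functions, so the only tools available are integration by parts, the symmetry of the $L^2$ inner products, and the defining property of $\bm P_M$. First I would expand both occurrences of $\mathscr B$ using the definition \eqref{def_B}, substituting the test tuple $(-\mathbb G_h,\bm z_h,q_h,\widehat{\bm z}_h^o)$ into the first form and the primary/test tuples $(\mathbb G_h,\bm z_h,-q_h,\widehat{\bm z}_h^o)$, $(\mathbb L_h,-\bm y_h,p_h,-\widehat{\bm y}_h^o)$ into the second, tracking every sign flip. This bookkeeping — especially the $-q_h$ in the pressure slot and the $-\bm y_h,-\widehat{\bm y}_h^o$ in the test slot — is where errors are most likely to appear. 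After adding the two expansions, the genuine volume terms that survive are $-(\mathbb L_h,\mathbb G_h)_{\mathcal T_h}+(\mathbb G_h,\mathbb L_h)_{\mathcal T_h}=0$ together with the coupling pairs $-(\bm y_h,\nabla\cdot\mathbb G_h)_{\mathcal T_h}-(\mathbb G_h,\nabla\bm y_h)_{\mathcal T_h}$, $(\mathbb L_h,\nabla\bm z_h)_{\mathcal T_h}+(\bm z_h,\nabla\cdot\mathbb L_h)_{\mathcal T_h}$, and the analogous pressure--divergence pairs.

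Next I would apply the element-wise identities $(\bm y,\nabla\cdot\mathbb G)_K+(\mathbb G,\nabla\bm y)_K=\langle\bm y,\mathbb G\bm n\rangle_{\partial K}$ and $(p,\nabla\cdot\bm z)_K+(\bm z,\nabla p)_K=\langle p,\bm z\cdot\bm n\rangle_{\partial K}$, summed over $K\in\mathcal T_h$, to convert each of those volume pairs into a facet integral over all of $\partial\mathcal T_h$. The resulting contributions $-\langle\bm y_h,\mathbb G_h\bm n\rangle$, $\langle\bm z_h,\mathbb L_h\bm n\rangle$, $-\langle p_h,\bm z_h\cdot\bm n\rangle$, $-\langle q_h,\bm y_h\cdot\bm n\rangle$ over $\partial\mathcal T_h$ then cancel, by symmetry of the inner product, against the full-boundary facet terms coming from the second line of each $\mathscr B$ (the $-\langle\mathbb L_h\bm n-p_h\bm n-h^{-1}\bm P_M\bm y_h,\bm z_h\rangle_{\partial\mathcal T_h}$ type terms and their $\mathbb G$-analogue). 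At this stage the stabilization remnant $\langle h^{-1}\bm P_M\bm y_h,\bm z_h\rangle_{\partial\mathcal T_h}-\langle h^{-1}\bm P_M\bm z_h,\bm y_h\rangle_{\partial\mathcal T_h}$ is annihilated by rewriting $\langle\bm P_M\bm y_h,\bm z_h\rangle_e=\langle\bm P_M\bm y_h,\bm P_M\bm z_h\rangle_e$ and invoking symmetry.

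Finally I would collect the interior-facet terms from the first, third, and fourth lines of the two expansions, all supported on $\partial\mathcal T_h\setminus\varepsilon_h^\partial$ because the hybrid traces $\widehat{\bm y}_h^o,\widehat{\bm z}_h^o$ live only on interior faces. These pair off cleanly: $\langle\widehat{\bm y}_h^o,\mathbb G_h\bm n\rangle$ against $\langle\mathbb G_h\bm n,\widehat{\bm y}_h^o\rangle$, the corresponding $\mathbb L_h$-trace terms, the pressure-trace terms against each other, the symmetric pair $\langle h^{-1}\widehat{\bm y}_h^o,\widehat{\bm z}_h^o\rangle$ and $\langle h^{-1}\widehat{\bm z}_h^o,\widehat{\bm y}_h^o\rangle$, and the leftover mixed stabilization terms such as $-\langle h^{-1}\widehat{\bm y}_h^o,\bm z_h\rangle+\langle h^{-1}\bm P_M\bm z_h,\widehat{\bm y}_h^o\rangle$, which vanish after replacing $\langle\widehat{\bm y}_h^o,\bm z_h\rangle_e=\langle\widehat{\bm y}_h^o,\bm P_M\bm z_h\rangle_e$ (valid because $\widehat{\bm y}_h^o|_e\in\bm M_h$) and using symmetry.

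The hard part will not be any single step but the sheer volume of terms and the discipline required to keep the interior/boundary facet splitting and all the sign flips straight; the only genuinely non-formal ingredient, used repeatedly, is the projection identity $\langle\bm P_M\bm a,\bm b\rangle_e=\langle\bm P_M\bm a,\bm P_M\bm b\rangle_e$, which is exactly what lets unprojected traces be symmetrized so that the coupling and stabilization terms cancel in pairs.
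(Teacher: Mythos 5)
Your proposal is correct and follows essentially the same route as the paper's proof, which likewise expands both occurrences of $\mathscr B$ from \eqref{def_B} with the stated sign flips and then concludes by integration by parts, with all coupling, trace, and stabilization terms cancelling in pairs exactly as you describe. If anything, your write-up is slightly more complete than the paper's, since you make explicit the self-adjointness identity $\langle\bm P_M\bm a,\bm b\rangle_e=\langle\bm P_M\bm a,\bm P_M\bm b\rangle_e$ (and the fact that $\widehat{\bm y}_h^o|_e,\widehat{\bm z}_h^o|_e\in\bm M_h$ on interior faces) needed to kill the mixed stabilization terms, which the paper uses tacitly.
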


\begin{lemma}\label{ex_uni}
	There exists a unique solution of the HDG discretized optimality system \eqref{HDG_full_discrete}.
\end{lemma}

\subsection{Proof of the main result}
In our proof of the main result, we use the following auxiliary HDG problem: for the optimal control $ u $ fixed, find
\begin{align*}
(\mathbb L_h( u),\mathbb G_h( u),\bm y_h( u),\bm z_h( u), p_h( u),q_h( u),\widehat {\bm y}_h^o( u),\widehat {\bm z}_h^o( u))\\
\in \mathbb K_h\times\mathbb K_h\times \bm{V}_h\times  \bm V_h \times W_h^0 \times W_h^0\times \bm M_h(o)\times \bm M_h(o)
\end{align*}
such that
\begin{subequations}\label{HDG_inter_u}
	\begin{align}
	\mathscr B(\mathbb L_h( u),\bm y_h( u),p_h( u),\widehat{ y}_h^o( u);\mathbb T_1,\bm v_1, w_1,\bm \mu_1)&=(\bm f,\bm v_1)_{\mathcal T_h}+\langle  P_M u \bm \tau , h^{-1} \bm v_1+\mathbb T_1\bm n \rangle_{\varepsilon_h^\partial}, \label{HDG_u_a}\\
	\mathscr B(\mathbb G_h( u),\bm z_h( u),-q_h( u),\widehat{\bm z}_h^o( u);\mathbb T_2,\bm v_2, w_2,\bm \mu_2)&=(\bm y_h( u) - \bm y_d,\bm v_2)_{\mathcal T_h},\label{HDG_u_b}
	\end{align}
\end{subequations}
for all $\left(\mathbb T_1,\mathbb T_2, \bm v_1,\bm v_2, w_1,w_2,\bm\mu_1,\bm{\mu}_2\right)\in \mathbb K_h\times\mathbb K_h\times\bm{V}_h\times \bm{V}_h\times  W_h^0 \times W_h^0\times \bm M_h(o)\times \bm M_h(o)$.

We split the proof of the main result, \Cref{main_res}, into eleven steps.  We first consider the solution of the mixed form \eqref{TCOSE1}-\eqref{TCOASE3} of the optimality system, and the solution of the auxiliary problem.  We estimate the errors using $ L^2 $ projections.  Define
\begin{equation}\label{notation_1}
\begin{aligned}
\delta^{\mathbb{L}} &=\mathbb{L}-{\bm\Pi}_{\mathbb{K}}\mathbb{L},  &  \varepsilon^{\mathbb{L}}_h &= {\bm\Pi}_{\mathbb{K}}\mathbb{L}-\mathbb{L}_h( u),\\
\delta^{\bm y}&=\bm y- {\bm \Pi _V} \bm y, &  \varepsilon^{\bm y}_h &={\bm \Pi_{\bm V}}\bm y-\bm y_h( u),\\
\delta^{p} &= p- \Pi_W p,  & \varepsilon^p_h &= \Pi_W p-p_h( u),\\
\delta^{\widehat {\bm y}} &= \bm y-\bm P_{ M} \bm y,  &  \varepsilon^{\widehat {\bm y}}_h &=\bm P_{ M} \bm y-\widehat{\bm y}_h( u),\\
\widehat {\bm\delta}_1 &= \delta^{\mathbb{L}}\bm n-\delta^p \bm n-h^{-1} \bm P_{M} \delta^{\bm y},  & &
\end{aligned}
\end{equation}
where $\widehat {\bm y}_h( u) = \widehat {\bm y}_h^o(u)$ on $\varepsilon_h^o$ and $\widehat {\bm y}_h( u) =  P_M  u \bm \tau$ on $\varepsilon_h^{\partial}$, which gives $\varepsilon_h^{\widehat {\bm y}} = \bm 0$ on $\varepsilon_h^{\partial}$.
\subsubsection{Step 1: The error equation for part 1 of the auxiliary problem \eqref{HDG_u_a}.}
\label{subsec:proof_step1}	
\begin{lemma}\label{error_Lpz}
	We have
	\begin{equation} \label{error_eq_lyp}
	\begin{split}
	\mathscr B(\varepsilon_h^{\mathbb{L}},\varepsilon^{\bm y}_h,\varepsilon^p_h,\varepsilon^{\widehat{\bm y}}_h;\mathbb T_1,\bm v_1,w_1,\bm\mu_1)=\langle \widehat{\bm \delta}_1, \bm v_1\rangle_{\partial \mathcal{T}_h}-\langle \widehat{\bm \delta}_1, \bm \mu_1\rangle_{\partial \mathcal{T}_h\backslash \varepsilon_h^\partial}.
	\end{split}
	\end{equation}
\end{lemma}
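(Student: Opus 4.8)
The plan is to use the linearity of $\mathscr B$ in its first four arguments together with the consistency of the scheme. Writing each error as projection minus discrete solution and recalling that the fourth slot of $\mathscr B$ only sees its argument on the interior faces $\partial\mathcal T_h\backslash\varepsilon_h^\partial$ (where $\varepsilon_h^{\widehat{\bm y}}=\bm P_M\bm y-\widehat{\bm y}_h^o(u)$, since $\widehat{\bm y}_h(u)=\widehat{\bm y}_h^o(u)$ there), linearity gives
\begin{align*}
\mathscr B(\varepsilon_h^{\mathbb L},\varepsilon_h^{\bm y},\varepsilon_h^p,\varepsilon_h^{\widehat{\bm y}};\mathbb T_1,\bm v_1,w_1,\bm\mu_1)
&= \mathscr B(\bm\Pi_{\mathbb K}\mathbb L,\bm\Pi_V\bm y,\Pi_W p,\bm P_M\bm y;\mathbb T_1,\bm v_1,w_1,\bm\mu_1)\\
&\quad - \mathscr B(\mathbb L_h(u),\bm y_h(u),p_h(u),\widehat{\bm y}_h^o(u);\mathbb T_1,\bm v_1,w_1,\bm\mu_1).
\end{align*}
The second term on the right is exactly the left side of the auxiliary equation \eqref{HDG_u_a}, so it equals $(\bm f,\bm v_1)_{\mathcal T_h}+\langle P_M u\,\bm\tau,h^{-1}\bm v_1+\mathbb T_1\bm n\rangle_{\varepsilon_h^\partial}$. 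Hence the whole task reduces to evaluating $\mathscr B$ at the projected exact solution and showing it equals this same data plus the residual $\langle\widehat{\bm\delta}_1,\bm v_1\rangle_{\partial\mathcal T_h}-\langle\widehat{\bm\delta}_1,\bm\mu_1\rangle_{\partial\mathcal T_h\backslash\varepsilon_h^\partial}$.

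The core computation is to expand $\mathscr B(\bm\Pi_{\mathbb K}\mathbb L,\bm\Pi_V\bm y,\Pi_W p,\bm P_M\bm y;\,\cdot\,)$ from the definition \eqref{def_B} and simplify the groups of terms tested against $\mathbb T_1$, $w_1$, $\bm v_1$, and $\bm\mu_1$ separately. First I would strip the $L^2$ projections using their defining orthogonality: since $\nabla\cdot\mathbb T_1$ and $\nabla w_1$ have degree $\le k+1$, $\mathbb T_1$ and $\nabla\bm v_1$ have degree $\le k$, $\nabla\cdot\bm v_1$ has degree $\le k$, and both $\mathbb T_1\bm n$ and $w_1\bm n$ lie in $\bm M_h$ on each face, every projection can be removed, replacing projected quantities by the exact $\mathbb L,\bm y,p$ in the corresponding integrals. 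I would then integrate by parts element-by-element and insert the strong relations satisfied by the optimal state, namely $\mathbb L=\nabla\bm y$, $-\nabla\cdot(\mathbb L-p\mathbb I)=\bm f$, and $\nabla\cdot\bm y=0$ (the nonzero-$\bm f$ form of \eqref{TCOSE1}--\eqref{TCOSE3}), together with the boundary identities $\bm y=u\bm\tau$ and $\bm y\cdot\bm n=0$ on $\Gamma$. The $\mathbb T_1$-group then collapses to $\langle u\bm\tau,\mathbb T_1\bm n\rangle_{\varepsilon_h^\partial}=\langle P_M u\,\bm\tau,\mathbb T_1\bm n\rangle_{\varepsilon_h^\partial}$ (using that $\bm\tau$ is constant on each straight edge, so $\bm P_M(u\bm\tau)=(P_M u)\bm\tau$); the $w_1$-group vanishes because $\nabla\cdot\bm y=0$ and $\bm y\cdot\bm n=0$ on $\Gamma$; and the $\bm v_1$-group produces $(\bm f,\bm v_1)_{\mathcal T_h}$, a boundary stabilization remainder, and the interface term $\langle(\mathbb L-p\mathbb I)\bm n,\bm v_1\rangle_{\partial\mathcal T_h}$.

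The decisive bookkeeping step is to recognize the combination $\delta^{\mathbb L}\bm n-\delta^p\bm n-h^{-1}\bm P_M\delta^{\bm y}=\widehat{\bm\delta}_1$ inside the face integrals. Substituting $\bm P_M\delta^{\bm y}=\bm P_M\bm y-\bm P_M\bm\Pi_V\bm y$ lets the $h^{-1}$-stabilization contributions from the $\bm v_1$- and $\bm\mu_1$-groups reassemble into $\langle\widehat{\bm\delta}_1,\bm v_1\rangle_{\partial\mathcal T_h}$ and $-\langle\widehat{\bm\delta}_1,\bm\mu_1\rangle_{\partial\mathcal T_h\backslash\varepsilon_h^\partial}$, while the leftover $h^{-1}\bm P_M\bm y$ pieces telescope to the boundary data $\langle P_M u\,\bm\tau,h^{-1}\bm v_1\rangle_{\varepsilon_h^\partial}$ via $\bm P_M\bm y=(P_M u)\bm\tau$ on $\varepsilon_h^\partial$.

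I expect the main obstacle to be the $\bm\mu_1$-group: after extracting $-\langle\widehat{\bm\delta}_1,\bm\mu_1\rangle_{\partial\mathcal T_h\backslash\varepsilon_h^\partial}$ one is left with $\langle(\mathbb L-p\mathbb I)\bm n,\bm\mu_1\rangle_{\partial\mathcal T_h\backslash\varepsilon_h^\partial}$, which must vanish. This is where the high-regularity hypotheses $r_{\mathbb L}>1/2$ and $r_p>1/2$ are essential: they guarantee that $\mathbb L-p\mathbb I\in\mathbb H(\mathrm{div},\Omega)$ has a well-defined, single-valued $L^2$ normal trace, so that on each interior face the two contributions with opposite outward normals cancel against the single-valued $\bm\mu_1\in\bm M_h(o)$. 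Collecting the four groups and subtracting the data from \eqref{HDG_u_a} then yields the stated identity \eqref{error_eq_lyp}.
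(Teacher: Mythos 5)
Your proposal is correct and follows essentially the same route as the paper's proof: expand $\mathscr B$ at the $L^2$ projections of the exact solution, strip the projections via the orthogonality/degree counts you list, substitute the element-wise equations satisfied by $(\mathbb L,\bm y,p)$ (including $\langle(\mathbb L-p\mathbb I)\bm n,\bm\mu_1\rangle_{\partial\mathcal T_h\backslash\varepsilon_h^\partial}=0$ from the single-valued normal trace, exactly the paper's fourth consistency equation), reassemble the face remainders into $\widehat{\bm\delta}_1$, and subtract \eqref{HDG_u_a}. Your explicit linearity splitting and the identity $\bm P_M(u\bm\tau)=(P_M u)\bm\tau$ are steps the paper performs implicitly, so there is no substantive difference.
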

\begin{proof}
	Using the definition of $\mathscr B$ \eqref{def_B} gives
	\begin{align*}
	\hspace{1em}&\hspace{-1em} \mathscr B(\bm\Pi_{\mathbb{K}} \mathbb L,\bm\Pi_{V} \bm y,\Pi_W p, \bm P_{ M} \bm y;\mathbb T_1,\bm v_1,w_1,\bm \mu_1)\\
	&=(\bm\Pi_{\mathbb{K}} \mathbb L,\mathbb T_1)_{\mathcal{T}_h}+(\bm\Pi_V \bm y,\nabla\cdot \mathbb T_1)_{\mathcal{T}_h}-\langle \bm P_{ M} \bm y, \mathbb T_1 \bm{n}\rangle_{\partial\mathcal{T}_h\backslash \varepsilon_h^\partial}+(\bm\Pi_{\mathbb{K}} \mathbb L,\nabla \bm v_1)_{\mathcal T_h}\\
	&\quad-(\Pi_W p, \nabla\cdot \bm v_1)_{\mathcal{T}_h} - \langle \bm \Pi_{\mathbb{K}} \mathbb{L} \bm n - \Pi_W p \bm n - h^{-1}\bm P_{M} \bm \Pi_{\bm V} \bm y, \bm v_1\rangle_{\partial {\mathcal{T}_h}} \\
	&\quad-\langle  h^{-1} \bm P_M \bm y, \bm v_1\rangle_{\partial\mathcal T_h\backslash\varepsilon_h^\partial} -(\bm\Pi_V \bm y,\nabla w_1)_{\mathcal T_h}+\langle \bm P_{ M} \bm y\cdot\bm n,w_1\rangle_{\partial\mathcal T_h\backslash \varepsilon_h^\partial}\\
	&\quad+\left\langle\bm \Pi_{\mathbb{K}} \mathbb{L} \bm n- \Pi_W p \bm n-h^{-1} \bm P_{ M} (\bm \Pi_{\bm V} \bm y -\bm y),\bm \mu_1\right\rangle_{\partial\mathcal T_h\backslash\varepsilon^{\partial}_h}.
	\end{align*}
	By properties of $L^2$ projections, we have
	\begin{align*}
	\hspace{1em}&\hspace{-1em}\mathscr B(\bm \Pi_{\mathbb{K}} \mathbb L,\Pi_V \bm y,\Pi_W p,P_{\bm M} \bm y;\mathbb T_1,\bm v_1,w_1,\bm \mu_1)\\
	&=(\mathbb L,\mathbb T_1)_{\mathcal{T}_h}+(\bm y,\nabla\cdot \mathbb T_1)_{\mathcal{T}_h}-\left\langle \bm y, \mathbb T_1 \bm{n}\right\rangle_{\partial\mathcal{T}_h\backslash \varepsilon_h^\partial}+(\mathbb L,\nabla \bm v_1)_{\mathcal T_h}\\
	&\quad -(p, \nabla\cdot \bm v_1)_{\mathcal{T}_h} - \langle \mathbb{L} \bm n- p \bm n -h^{-1} \bm P_{ M} \bm y, \bm v_1\rangle_{\partial {\mathcal{T}_h}} \\
	&\quad+\langle\delta^{ \mathbb L} \bm n-\delta^p \bm n-h^{-1} \bm P_{ M} \delta^{\bm y} ,\bm v_1\rangle_{\partial {\mathcal{T}_h}}-\langle  h^{-1}\bm P_M \bm y,\bm v_1\rangle_{\partial \mathcal T_h\backslash \varepsilon_h^\partial} \\
	&\quad-(\bm y,\nabla w_1)_{\mathcal T_h}+\langle \bm y\cdot\bm n,w_1\rangle_{\partial\mathcal T_h\backslash \varepsilon_h^\partial}+\langle  \mathbb{L} \bm n- p \bm n, \bm \mu_1\rangle_{\partial {\mathcal{T}_h}\backslash \varepsilon_h^\partial}\\
	&\quad+ \langle h^{-1} \bm P_M \delta^{\bm y}, \bm \mu_1\rangle_{\partial {\mathcal{T}_h}\backslash \varepsilon_h^\partial} - \langle \delta^{\mathbb L} \bm n -\delta^p \bm n,\bm \mu_1\rangle_{\partial {\mathcal{T}_h}\backslash \varepsilon_h^\partial}.
	\end{align*}
	The exact solution $ (\mathbb{L},\bm y,p) $ satisfies
	\begin{align*}
	(\mathbb L,\mathbb T_1)_{\mathcal{T}_h}+(\bm y,\nabla\cdot\mathbb T_1)_{\mathcal{T}_h}-\langle\bm y,\mathbb T_1\bm n\rangle_{\partial \mathcal{T}_h\backslash\varepsilon_h^\partial}&=\langle  u\bm \tau,\mathbb{T}_1 \bm n \rangle_{\varepsilon_h^\partial},\\
	(\mathbb{L}, \nabla \bm v_1)_{\mathcal{T}_h}-(p,\nabla \cdot \bm v_1)_{\mathcal{T}_h}- \langle\mathbb{L}\bm n-p\bm n, \bm v_1\rangle_{\partial \mathcal{T}_h}&= (\bm f,\bm v_1),  \\
	-(\bm y, \nabla w_1)_{\mathcal{T}_h}+\langle\bm y\cdot \bm n,w_1\rangle_{\partial \mathcal{T}_h\backslash\varepsilon_h^\partial}&=0,\\
	\langle \mathbb{L}\bm n-p \bm n,\bm \mu_1\rangle_{\partial \mathcal{T}_h\backslash \varepsilon_h^\partial}&=0,
	\end{align*}
	for all $(\mathbb T_1,\bm v_1,w_1,\bm \mu_1)\in \mathbb{K}_h \times \bm V_h\times W_h\times \bm M_h(o)$. Therefore,
	\begin{align*}
	\hspace{1em}&\hspace{-1em}\mathscr B(\bm \Pi_{\mathbb{K}} \mathbb L,\bm \Pi_V \bm y,\Pi_W p, \bm P_{ M} \bm y;\mathbb T_1,\bm v_1,w_1,\bm \mu_1)\\
	&=\langle  u\bm \tau, \mathbb T_1 \bm{n}\rangle_{ \varepsilon_h^\partial}+(\bm f,\bm v_1)_{\mathcal{T}_h}+\langle\delta^{ \mathbb L} \bm n-\delta^p \bm n-h^{-1} \bm P_{ M} \delta^{\bm y} ,\bm v_1\rangle_{\partial {\mathcal{T}_h}} \\
	&\quad +\langle h^{-1} \bm P_M \bm y, \bm v_1\rangle_{ \varepsilon_h^\partial}+ \langle h^{-1} \bm P_M \delta^{\bm y}, \bm \mu_1\rangle_{\partial {\mathcal{T}_h}\backslash \varepsilon_h^\partial} - \langle \delta^{\mathbb L} \bm n -\delta^p \bm n,\bm \mu_1\rangle_{\partial {\mathcal{T}_h}\backslash \varepsilon_h^\partial}\\
	&=(\bm f,\bm v_1)_{\mathcal{T}_h}+ \langle  (P_M u)\bm \tau, h^{-1} \bm v_1+\mathbb T_1\bm n\rangle_{\varepsilon_h^\partial}+\langle \widehat{\bm \delta}_1, \bm v_1\rangle_{\partial \mathcal{T}_h}-\langle \widehat{\bm \delta}_1, \bm \mu_1\rangle_{\partial \mathcal{T}_h\backslash \varepsilon_h^\partial}.
	\end{align*}
	Subtracting part 1 of the auxiliary problem \eqref{HDG_u_a} gives the result.
\end{proof}

\subsubsection{Step 2: Estimate for $\varepsilon_h^{\mathbb{L}}$.}
The proof of the following lemma is also given in the arXiv preprint of this paper, see \cite{GongHuMateosSinglerZhang1}.
\begin{lemma} \label{grad_y_h}
	We have
	\begin{align}
	\|\nabla \varepsilon_h^{\bm y}\|_{\mathcal T_h} + 	h^{-\frac 12}\|\varepsilon_h^{\bm y} - \varepsilon_h^{\widehat{\bm y}} \|_{\partial\mathcal T_h} \lesssim  \|\varepsilon_h^{\mathbb{L}}\|_{\mathcal{T}_h}+ h^{-\frac{1}{2}}\|\bm P_{ M} \varepsilon_h^{\bm y} -\varepsilon_h^{\widehat{\bm y}}\|_{\partial \mathcal{T}_h}.
	\end{align}
\end{lemma}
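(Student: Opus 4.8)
The plan is to derive an energy-type identity for $\|\nabla\varepsilon_h^{\bm y}\|_{\mathcal T_h}$ from the error equation \eqref{error_eq_lyp} by an astute choice of test function, and then separately estimate the face quantity $\|\varepsilon_h^{\bm y}-\varepsilon_h^{\widehat{\bm y}}\|_{\partial\mathcal T_h}$ using a projection and scaling argument. First I would test \eqref{error_eq_lyp} with $(\mathbb T_1,\bm v_1,w_1,\bm\mu_1)=(\nabla\varepsilon_h^{\bm y},\bm 0,0,\bm 0)$. This is admissible because $\varepsilon_h^{\bm y}\in\bm V_h$ is piecewise of degree $k+1$, so $\nabla\varepsilon_h^{\bm y}$ is piecewise of degree $k$ and hence lies in $\mathbb K_h$. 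With $\bm v_1=\bm 0$ and $\bm\mu_1=\bm 0$ the right-hand side of \eqref{error_eq_lyp} vanishes, and only the first three terms of $\mathscr B$ in \eqref{def_B} survive, giving $(\varepsilon_h^{\mathbb L},\nabla\varepsilon_h^{\bm y})_{\mathcal T_h}+(\varepsilon_h^{\bm y},\nabla\cdot\nabla\varepsilon_h^{\bm y})_{\mathcal T_h}-\langle\varepsilon_h^{\widehat{\bm y}},(\nabla\varepsilon_h^{\bm y})\bm n\rangle_{\partial\mathcal T_h\backslash\varepsilon_h^\partial}=0$. Integrating by parts element-by-element in the middle term, I obtain
\[
\|\nabla\varepsilon_h^{\bm y}\|_{\mathcal T_h}^2=(\varepsilon_h^{\mathbb L},\nabla\varepsilon_h^{\bm y})_{\mathcal T_h}+\langle\varepsilon_h^{\bm y},(\nabla\varepsilon_h^{\bm y})\bm n\rangle_{\partial\mathcal T_h}-\langle\varepsilon_h^{\widehat{\bm y}},(\nabla\varepsilon_h^{\bm y})\bm n\rangle_{\partial\mathcal T_h\backslash\varepsilon_h^\partial}.
\]

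The key step, which I expect to be the crux of the argument, is to insert the projection $\bm P_M$ into the first boundary term. On each face $e$ the trace $(\nabla\varepsilon_h^{\bm y})\bm n$ is a vector of degree $k$ polynomials, hence lies in $\bm M_h|_e$; therefore the defining property \eqref{def_P_M} of $\bm P_M$ gives $\langle\varepsilon_h^{\bm y},(\nabla\varepsilon_h^{\bm y})\bm n\rangle_{\partial\mathcal T_h}=\langle\bm P_M\varepsilon_h^{\bm y},(\nabla\varepsilon_h^{\bm y})\bm n\rangle_{\partial\mathcal T_h}$. This is exactly where the pairing of degree $k+1$ velocities with degree $k$ fluxes and traces is used. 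Recalling that $\varepsilon_h^{\widehat{\bm y}}=\bm 0$ on $\varepsilon_h^\partial$, I may extend the last boundary integral from $\partial\mathcal T_h\backslash\varepsilon_h^\partial$ to all of $\partial\mathcal T_h$ at no cost, so the two boundary terms combine into $\langle\bm P_M\varepsilon_h^{\bm y}-\varepsilon_h^{\widehat{\bm y}},(\nabla\varepsilon_h^{\bm y})\bm n\rangle_{\partial\mathcal T_h}$. Applying Cauchy--Schwarz to both terms, together with the discrete trace inequality $\|(\nabla\varepsilon_h^{\bm y})\bm n\|_{\partial\mathcal T_h}\lesssim h^{-1/2}\|\nabla\varepsilon_h^{\bm y}\|_{\mathcal T_h}$ (valid on the quasi-uniform mesh), and dividing by $\|\nabla\varepsilon_h^{\bm y}\|_{\mathcal T_h}$, yields $\|\nabla\varepsilon_h^{\bm y}\|_{\mathcal T_h}\lesssim\|\varepsilon_h^{\mathbb L}\|_{\mathcal T_h}+h^{-1/2}\|\bm P_M\varepsilon_h^{\bm y}-\varepsilon_h^{\widehat{\bm y}}\|_{\partial\mathcal T_h}$, which controls the first term on the left of the lemma by the right-hand side.

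It then remains to bound the second left-hand term $h^{-1/2}\|\varepsilon_h^{\bm y}-\varepsilon_h^{\widehat{\bm y}}\|_{\partial\mathcal T_h}$. I would split $\varepsilon_h^{\bm y}-\varepsilon_h^{\widehat{\bm y}}=(\varepsilon_h^{\bm y}-\bm P_M\varepsilon_h^{\bm y})+(\bm P_M\varepsilon_h^{\bm y}-\varepsilon_h^{\widehat{\bm y}})$. The second piece is already part of the right-hand side of the lemma, so the only remaining work is to estimate $\|\varepsilon_h^{\bm y}-\bm P_M\varepsilon_h^{\bm y}\|_{\partial\mathcal T_h}$. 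For this I would use that $\bm P_M$ reproduces element-wise constants: writing $\bar{\bm y}_K$ for the mean of $\varepsilon_h^{\bm y}$ on $K$ and using that $\bm P_M$ is an $L^2(\partial K)$-contraction, I get $\|\varepsilon_h^{\bm y}-\bm P_M\varepsilon_h^{\bm y}\|_{\partial K}\le 2\|\varepsilon_h^{\bm y}-\bar{\bm y}_K\|_{\partial K}$; then the scaled trace inequality followed by the Poincar\'e inequality $\|\varepsilon_h^{\bm y}-\bar{\bm y}_K\|_K\lesssim h\|\nabla\varepsilon_h^{\bm y}\|_K$ gives $\|\varepsilon_h^{\bm y}-\bm P_M\varepsilon_h^{\bm y}\|_{\partial\mathcal T_h}\lesssim h^{1/2}\|\nabla\varepsilon_h^{\bm y}\|_{\mathcal T_h}$. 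Hence $h^{-1/2}\|\varepsilon_h^{\bm y}-\bm P_M\varepsilon_h^{\bm y}\|_{\partial\mathcal T_h}\lesssim\|\nabla\varepsilon_h^{\bm y}\|_{\mathcal T_h}$, which has already been absorbed into the right-hand side in the previous step. Combining the two estimates via the triangle inequality completes the proof. The only genuine subtlety, and the step most likely to trip up a careless argument, is the projection identity $\langle\varepsilon_h^{\bm y},(\nabla\varepsilon_h^{\bm y})\bm n\rangle_{\partial\mathcal T_h}=\langle\bm P_M\varepsilon_h^{\bm y},(\nabla\varepsilon_h^{\bm y})\bm n\rangle_{\partial\mathcal T_h}$; everything else is routine scaling and projection analysis.
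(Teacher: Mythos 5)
Your proof is correct and follows essentially the same route as the paper's: the same test function $(\nabla\varepsilon_h^{\bm y},\bm 0,0,\bm 0)$ in the error equation, the same insertion of $\bm P_M$ via the fact that $(\nabla\varepsilon_h^{\bm y})\bm n\in\bm M_h$ face-by-face, and the same splitting of $\varepsilon_h^{\bm y}-\varepsilon_h^{\widehat{\bm y}}$ handled by the element mean, a scaled trace inequality, and the Poincar\'e inequality. The only cosmetic difference is that you bound $\|\varepsilon_h^{\bm y}-\bm P_M\varepsilon_h^{\bm y}\|_{\partial K}$ by a contraction argument with a factor $2$, whereas the paper uses the best-approximation property of $\bm P_M$ directly (constants belong to $\bm M_h$), which is immaterial up to the hidden constant.
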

Next, we introduce the improved trace inequality.
\begin{lemma}\label{improved_trace_inequality}
	Let $e$ be a face of $K\in\mathcal T_h$. If $ \mathbb L-p\mathbb I \in  \mathbb H^{s}(\Omega)\cap \mathbb H(\textup{div}, \Omega)$ with $s>0$, then for all $\bm \mu\in [\mathcal {\bm P}^{k}(e)]^2$, we have
	\begin{align}
	\langle( \mathbb L-p\mathbb I)\bm n, \bm \mu \rangle_e \lesssim  h^{-1/2} \|\bm \mu\|_{e}(\|\mathbb L-p\mathbb I\|_{K}+h\|\nabla\cdot(\mathbb L-p\mathbb I)\|_K).
	\end{align}
\end{lemma}
The proof of this lemma can be found in \cite[Lemma 2.4]{MR3612900} for the vector case; the proof of the tensor case is trival.

\begin{lemma} \label{error_energy_norm}
		Let $\mathcal M =  h^{s_{\mathbb L}}\norm{\mathbb L}_{s^{\mathbb L},\Omega}+  h^{s_{p}}\norm{p}_{s^{p},\Omega} + h^{s_{\bm y}-1}\norm{\bm y}_{s^{\bm y},\Omega}$, we have
	\begin{align*}
	\|\varepsilon_h^{\mathbb L}\|_{\mathcal{T}_h}+h^{-\frac 1  2}\|{\bm P_M\varepsilon_h^{\bm y}-\varepsilon_h^{\widehat{\bm y}}}\|_{\partial \mathcal T_h} \lesssim \mathcal M.
	\end{align*}

\end{lemma}

\begin{proof}
	First, since $\varepsilon_h^{\widehat{\bm y}}=\bm 0$ on $\varepsilon_h^\partial$, the energy identity for $\mathscr B$ in  \Cref{energy_norm} gives
	\begin{align*}
	\mathscr B(\varepsilon_h^{\mathbb{L}},\varepsilon^{\bm y}_h,\varepsilon^p_h,\varepsilon^{\widehat{\bm y}}_h; \varepsilon_h^{\mathbb{L}},\varepsilon^{\bm y}_h,\varepsilon^p_h,\varepsilon^{\widehat{\bm y}}_h)=\|\varepsilon_h^{\mathbb{L}}\|^2_{\mathcal{T}_h} +h^{-1} \|\bm P_{ M} \varepsilon_h^{\bm y} -\varepsilon_h^{\widehat{\bm y}}\|^2_{\partial \mathcal{T}_h}.
	\end{align*}
	Then taking $(\mathbb T_1,\bm v_1,w_1,\bm \mu_1)=(\varepsilon_h^{\mathbb{L}},\varepsilon^{\bm y}_h,\varepsilon^p_h,\varepsilon^{\widehat{\bm y}}_h)$ in the error equation \eqref{error_eq_lyp} gives
	\begin{align*}
	\hspace{2em}&\hspace{-2em} \|\varepsilon_h^{\mathbb{L}}\|^2_{\mathcal{T}_h} +h^{-1} \| \bm P_{M} \varepsilon_h^{\bm y} -\varepsilon_h^{\widehat{\bm y}}\|^2_{\partial \mathcal{T}_h} \\
	&=\langle \widehat{\bm \delta}_1,  \varepsilon_h^{\bm y}-\varepsilon_h^{\widehat{\bm y}} \rangle_{\partial \mathcal{T}_h}\\
	&=\langle \delta^{\mathbb{L}}\bm n-\delta^p \bm n-h^{-1} \bm P_{M} \delta^{\bm y},  \varepsilon_h^{\bm y}-\varepsilon_h^{\widehat{\bm y}} \rangle_{\partial \mathcal{T}_h}\\
	&=\langle \delta^{\mathbb{L}}\bm n-\delta^p \bm n,  \varepsilon_h^{\bm y}-\varepsilon_h^{\widehat{\bm y}} \rangle_{\partial \mathcal{T}_h}  - \langle h^{-1}  \delta^{\bm y}, \bm P_{M}  \varepsilon_h^{\bm y}-\varepsilon_h^{\widehat{\bm y}} \rangle_{\partial \mathcal{T}_h} \\
	&\le Ch^{-\frac 1 2}\|\varepsilon_h^{\bm y}-\varepsilon_h^{\widehat{\bm y}}\|_{\partial \mathcal{T}_h}(\|\delta^{\mathbb L}-\delta^{p}\mathbb I\|_{\mathcal{T}_h} + h\|\nabla\cdot(\delta^{\mathbb L}-\delta^{p}\mathbb I)\|_{\mathcal{T}_h})\\
	&\quad + Ch^{-\frac 1 2}\|\delta^{\bm y}\|_{\partial \mathcal T_h} h^{-\frac 1 2}\|\bm P_{M}  \varepsilon_h^{\bm y}-\varepsilon_h^{\widehat{\bm y}} \|_{\partial \mathcal T_h}\\
	&\le Ch^{-\frac 1 2}\|\varepsilon_h^{\bm y}-\varepsilon_h^{\widehat{\bm y}}\|_{\partial \mathcal{T}_h}(\|\delta^{\mathbb L}\|_{\mathcal{T}_h}+\|\delta^{p}\|_{\mathcal{T}_h})\\
	&\quad  +Ch^{-\frac 1 2}\|\varepsilon_h^{\bm y}-\varepsilon_h^{\widehat{\bm y}}\|_{\partial \mathcal{T}_h}( h\|\nabla\cdot({\mathbb L}-p\mathbb I)\|_{\mathcal{T}_h} + h\|\nabla\cdot(\bm \Pi_{\mathbb K} (\mathbb L-p\mathbb I))\|_{\mathcal{T}_h})\\
	&\quad + Ch^{-\frac 1 2}\|\delta^{\bm y}\|_{\partial \mathcal T_h} h^{-\frac 1 2}\|\bm P_{M}  \varepsilon_h^{\bm y}-\varepsilon_h^{\widehat{\bm y}} \|_{\partial \mathcal T_h}\\
	&\le Ch^{-\frac 1 2}\|\varepsilon_h^{\bm y}-\varepsilon_h^{\widehat{\bm y}}\|_{\partial \mathcal{T}_h}(\|\delta^{\mathbb L}\|_{\mathcal{T}_h}+\|\delta^{p}\|_{\mathcal{T}_h})\\
	&\quad +Ch^{-\frac 1 2}\|\varepsilon_h^{\bm y}-\varepsilon_h^{\widehat{\bm y}}\|_{\partial \mathcal{T}_h} h\|\nabla\cdot({\mathbb L}-p\mathbb I)\|_{\mathcal{T}_h}\\
	&\quad   + Ch^{-\frac 1 2}\|\varepsilon_h^{\bm y}-\varepsilon_h^{\widehat{\bm y}}\|_{\partial \mathcal{T}_h}h\|\nabla\cdot(\bm \Pi_{\mathbb K} (\mathbb L-p\mathbb I)) -\nabla\cdot(\bm \Pi_{\mathbb K}^0 (\mathbb L-p\mathbb I)) \|_{\mathcal{T}_h}\\
	&\quad + Ch^{-\frac 1 2}\|\delta^{\bm y}\|_{\partial \mathcal T_h} h^{-\frac 1 2}\|\bm P_{M}  \varepsilon_h^{\bm y}-\varepsilon_h^{\widehat{\bm y}} \|_{\partial \mathcal T_h},
	\end{align*}
	where $\bm \Pi_{\mathbb K}^0$ is the $L^2$ projection into the space of piecewise constant functions and we used  \Cref{improved_trace_inequality}. Finally, we use Young's inequality, $0<s_{\mathbb L}, s_p<1$, the fact that $\mathbb L - p\mathbb I\in \mathbb{H}(\textup{div},\Omega)$ implies that $\|\nabla\cdot(\mathbb L - p\mathbb I)\|_{\mathcal{T}_h}$ is bounded independent of $h$,  and  \Cref{grad_y_h} to obtain
	\begin{align*}
	\hspace{2em}&\hspace{-2em} \|\varepsilon_h^{\mathbb L}\|_{\mathcal{T}_h}^2+h^{-1}\|{\bm P_M\varepsilon_h^{\bm y}-\varepsilon_h^{\widehat{\bm y}}}\|_{\partial \mathcal T_h}^2\\
	&\lesssim h^{-\frac 1 2}\|\varepsilon_h^{\bm y}-\varepsilon_h^{\widehat{\bm y}}\|_{\partial \mathcal{T}_h}(\|\delta^{\mathbb L}\|_{\mathcal{T}_h}+\|\delta^{p}\|_{\mathcal{T}_h}) +h^{-\frac 1 2}\|\varepsilon_h^{\bm y}-\varepsilon_h^{\widehat{\bm y}}\|_{\partial \mathcal{T}_h} h\|\nabla\cdot({\mathbb L}-p\mathbb I)\|_{\mathcal{T}_h}\\
	&\quad   + h^{-\frac 1 2}\|\varepsilon_h^{\bm y}-\varepsilon_h^{\widehat{\bm y}}\|_{\partial \mathcal{T}_h}\|\bm \Pi_{\mathbb K} (\mathbb L-p\mathbb I) -\bm \Pi_{\mathbb K}^0 (\mathbb L-p\mathbb I) \|_{\mathcal{T}_h} + h^{-\frac 1 2}\|\delta^{\bm y}\|_{\partial \mathcal T_h} h^{-\frac 1 2}\|\bm P_{M}  \varepsilon_h^{\bm y}-\varepsilon_h^{\widehat{\bm y}} \|_{\partial \mathcal T_h}\\
	& \lesssim\norm{\delta^{\mathbb L}}_{\mathcal T_h}^2 + \norm{\delta^{p}}_{\mathcal T_h}^2 +h^2\|\nabla\cdot({\mathbb L}-p\mathbb I)\|_{\mathcal{T}_h}^2 +\|\bm \Pi_{\mathbb K} (\mathbb L-p\mathbb I) -\bm \Pi_{\mathbb K}^0 (\mathbb L-p\mathbb I) \|_{\mathcal{T}_h}^2+ h^{-1}\norm{\delta^{\bm y}}_{\partial\mathcal T_h}^2\\
	&\lesssim h^{2s_{\mathbb L}}\norm{\mathbb L}_{s^{\mathbb L},\Omega}^2+  h^{2s_{p}}\norm{p}_{s^{p},\Omega}^2 + h^{2s_{\bm y}-2}\norm{\bm y}_{s^{\bm y},\Omega}^2.
	\end{align*}
\end{proof}

\subsubsection{Step 3: Estimate for $\varepsilon_h^p$.}
\begin{lemma}\label{pressure_estimate_p_1}
	We have $ \|{\varepsilon_h^p}\|_{\mathcal T_h} \lesssim \mathcal M $, where $\mathcal M$ is defined in \Cref{error_energy_norm}.
\end{lemma}
\begin{proof}
	We utilize an inf-sup proof strategy for the pressure; cf. \cite[Proposition 3.4]{MR2772094}, \cite[Lemma 5.3]{MR3556409}.  We know from \cite{FBrezzi_MFortin_book_1} that for any function $ \vartheta \in L^2 (\Omega)$ such that $(\vartheta,1)_\Omega=0$, we have
	\begin{align}\label{infsup}
	\| \vartheta\|_\Omega \lesssim \sup_{\bm v\in {\bm H}_0^1 (\Omega)\backslash\{0\}} \frac{(\vartheta,\nabla\cdot \bm v)_{\Omega}}{\norm{\bm v} _{\bm H ^1 (\Omega)}}.
	\end{align}
	Since
	\begin{align*}
	(\varepsilon_h^p,1)_{\mathcal T_h} = (\Pi_W p - p_h(\bm u),1)_{\mathcal T_h} = (\Pi_W p,1)_{\mathcal T_h} -(p_h( u),1)_{\mathcal T_h} =0,
	\end{align*}
	we can take $\vartheta:=\varepsilon_h^p$ in \eqref{infsup}. Then we have
	\begin{align*}
	\|\varepsilon_h^p \|_\Omega \lesssim \sup_{\bm v\in {\bm H}_0^1 (\Omega)\backslash\{0\}} \frac{(\varepsilon_h^p,\nabla\cdot \bm v)_{\Omega}}{\norm{\bm v} _{\bm H ^1 (\Omega)}},
	\end{align*}
	and
	\begin{align*}
	(\varepsilon_h^p ,\nabla\cdot \bm v)_\Omega =-(\nabla \varepsilon_h^p ,\bm \Pi_{ V} \bm v)_{\mathcal{T}_h}+\langle\varepsilon_h^p, \bm v\cdot \bm n\rangle_{\partial \mathcal{T}_h}.
	\end{align*}
	Next, taking  $(\mathbb T_1,\bm v_1, w_1,\bm \mu_1) = (0,\bm \Pi_{V} \bm v,0,0)$ in \Cref{error_Lpz} and using \eqref{def_B2} give
	\begin{align*}
	(\nabla \varepsilon_h^p ,\bm \Pi_{ V} \bm v)_{\mathcal{T}_h} =  (\nabla\cdot\varepsilon_h^{\mathbb L}, \bm \Pi_{ V} \bm v)_{\mathcal{T}_h}-\langle h^{-1}(\bm P_M\varepsilon_h^{\bm y}-\varepsilon_h^{\widehat{\bm y}}),\bm\Pi_{V}\bm v\rangle_{\partial \mathcal{T}_h} + \langle \widehat {\bm \delta}_1, \bm \Pi_{ V} \bm v \rangle_{\partial \mathcal T_h},
	\end{align*}
	where we used $\varepsilon_h^{\widehat {\bm y}} = 0$ on $\varepsilon_h^\partial$.
	The above two equalities give
	\begin{align*}
	(\varepsilon_h^p ,\nabla\cdot \bm v)_\Omega  &= - (\nabla\cdot\varepsilon_h^{\mathbb L}, \bm \Pi_{ V} \bm v)_{\mathcal{T}_h}+\langle h^{-1}(\bm P_M\varepsilon_h^{\bm y}-\varepsilon_h^{\widehat{\bm y}}),\bm\Pi_{V}\bm v\rangle_{\partial \mathcal{T}_h} +\langle\varepsilon_h^p, \bm v\cdot \bm n\rangle_{\partial \mathcal{T}_h} - \langle \widehat {\bm \delta}_1, \bm \Pi_{ V} \bm v \rangle_{\partial \mathcal T_h}\\
	&= - (\nabla\cdot\varepsilon_h^{\mathbb L}, \bm v)_{\mathcal{T}_h}+\langle h^{-1}(\bm P_M\varepsilon_h^{\bm y}-\varepsilon_h^{\widehat{\bm y}}),\bm\Pi_{V}\bm v\rangle_{\partial \mathcal{T}_h} +\langle\varepsilon_h^p, \bm v\cdot \bm n\rangle_{\partial \mathcal{T}_h}- \langle \widehat {\bm \delta}_1, \bm \Pi_{ V} \bm v \rangle_{\partial \mathcal T_h}\\
	&=  (\varepsilon_h^{\mathbb L}, \nabla \bm v)_{\mathcal{T}_h}+\langle h^{-1}(\bm P_M\varepsilon_h^{\bm y}-\varepsilon_h^{\widehat{\bm y}}),\bm\Pi_{V}\bm v\rangle_{\partial \mathcal{T}_h}+\langle  -\varepsilon_h^{\mathbb L} \bm n + \varepsilon_h^p\bm n, \bm P_M \bm v\rangle_{\partial \mathcal{T}_h}- \langle \widehat {\bm \delta}_1, \bm \Pi_{ V} \bm v \rangle_{\partial \mathcal T_h}.
	\end{align*}
	Next, we take $(\mathbb T_1,\bm v_1, w_1,\bm \mu_1) = (0, 0,0,\bm P_M\bm v )$ in \Cref{error_Lpz}.  Since $\bm v\in \bm H_0^1(\Omega)$ we have
	\begin{align*}
	\langle  \varepsilon_h^{\mathbb L} \bm n - \varepsilon_h^p\bm n, \bm P_M \bm v\rangle_{\partial \mathcal{T}_h} = \langle h^{-1}(\bm P_M\varepsilon_h^{\bm y}-\varepsilon_h^{\widehat{\bm y}}),\bm P_M\bm v\rangle_{\partial \mathcal{T}_h}-\langle \widehat{\bm \delta}_1, \bm P_M\bm v\rangle_{\partial \mathcal{T}_h}.
	\end{align*}
	This implies
	\begin{align*}
	(\varepsilon_h^p ,\nabla\cdot \bm v)_\Omega  &=  (\varepsilon_h^{\mathbb L}, \nabla \bm v)_{\mathcal{T}_h}+\langle h^{-1}(\bm P_M\varepsilon_h^{\bm y}-\varepsilon_h^{\widehat{\bm y}}),\bm\Pi_{V}\bm v - \bm P_M \bm v \rangle_{\partial \mathcal{T}_h} - \langle \widehat{\bm \delta}_1,\bm \Pi_V \bm v-  \bm P_M\bm v\rangle_{\partial \mathcal{T}_h}.
	\end{align*}
	
		For the above equality, the first two terms can be easily handled by Cauchy-Schwarz inequality. For the last term $- \langle \widehat{\bm \delta}_1,\bm \Pi_V \bm v-  \bm P_M\bm v\rangle_{\partial \mathcal{T}_h}$, we can use the same technique as in the proof of \Cref{error_energy_norm} to get
		\begin{align*}
		- \langle \widehat{\bm \delta}_1,\bm \Pi_V \bm v-  \bm P_M\bm v\rangle_{\partial \mathcal{T}_h}& \le Ch^{-\frac 1 2}\|\bm \Pi_V \bm v-  \bm P_M\bm v\|_{\partial \mathcal{T}_h}(\|\delta^{\mathbb L}\|_{\mathcal{T}_h}+\|\delta^{p}\|_{\mathcal{T}_h})\\
		&\quad+Ch^{-\frac 1 2}\|\bm \Pi_V \bm v-  \bm P_M\bm v\|_{\partial \mathcal{T}_h} h\|\nabla\cdot({\mathbb L}-p\mathbb I)\|_{\mathcal{T}_h}\\
		&\quad + Ch^{-\frac 1 2}\|\bm \Pi_V \bm v-  \bm P_M\bm v\|_{\partial \mathcal{T}_h} \|\bm \Pi_{\mathbb K} (\mathbb L-p\mathbb I) -\bm \Pi_{\mathbb K}^0 (\mathbb L-p\mathbb I) \|_{\mathcal{T}_h}\\
		&\quad + Ch^{-\frac 1 2}\|\delta^{\bm y}\|_{\partial \mathcal T_h} h^{-\frac 1 2}\|\bm \Pi_V \bm v-  \bm P_M\bm v \|_{\partial \mathcal T_h}.
		\end{align*}

		Applying the Cauchy-Schwarz inequality and using \Cref{error_energy_norm} give the desired result.
	
\end{proof}

\subsubsection{Step 4: Estimate for $\varepsilon_h^{ y}$ by a duality argument. }
For any $\bm \Theta \in \bm L^2(\Omega)$, the dual problem is given by
\begin{equation} \label{dual_pde}
\begin{split}
\mathbb{A}-\nabla\bm \Phi&=0\qquad\qquad~\text{in}\ \Omega,\\
-\nabla\cdot\mathbb{A}-\nabla\Psi&=\bm \Theta \qquad\text{in}\ \Omega,\\
\nabla\cdot\bm \Phi&=0\qquad\qquad~\text{in}\ \Omega,\\
\bm \Phi&=0\qquad\qquad~\text{on}\ \partial\Omega.
\end{split}
\end{equation}
Since the domain $\Omega$ is convex, we have the following regularity estimate
\begin{align}\label{regularity_dual}
\|\mathbb{A}\|_1+\|\bm \Phi\|_2+\|\Psi\|_1\le C\|\Theta\|_{\Omega}.
\end{align}

In the proof of the next lemma for estimating $\varepsilon_h^{\bm y}$, we use the following notation:
\begin{align}\label{notation_2}
\delta^{\mathbb{A}} &=\mathbb{A}-{\bm\Pi _{\mathbb K}} \mathbb{A}, \quad \delta^{\bm \Phi}=\bm \Phi- {\bm \Pi_{ V}} \bm \Phi, \quad
\delta^{\Psi}=\Psi - \Pi_W \Psi,  \quad \delta^{\widehat {\bm \Phi}} = \bm \Phi- \bm P_{M}\bm \Phi.
\end{align}

	\begin{lemma}\label{error_y_lyp}
		Let $\mathcal M$  be defined as in \Cref{error_energy_norm}. Then we have
		\begin{align} \label{error_yu}
		\|\varepsilon_h^{\bm y}\|_{\mathcal T_h} \lesssim h\mathcal M.
		\end{align}
	\end{lemma}

\begin{proof}
	We consider the dual problem \eqref{dual_pde} with $\bm\Theta = \varepsilon_h^{\bm y}$.  In the definition of $ \mathscr{B} $ in \eqref{def_B}  and \eqref{def_B2}, we take  $(\mathbb T_1,\bm v_1,w_1,\bm \mu_1) = ( -\bm \Pi_{\mathbb{K}} \mathbb{A},  \bm \Pi_{ V} \bm \Phi,  \Pi_W \Psi, \bm P_{M} \bm \Phi)$.  Since $\bm \Phi=0$ on $\varepsilon_h^\partial$, $\varepsilon_h^{\widehat {\bm y}} = 0$ on $\varepsilon_h^\partial$,   $\nabla\cdot \bm \Phi =0$,  by using integration by parts we have
	\begin{align*}
	\hspace{1em}&\hspace{-1em}  \mathscr B(\varepsilon_h^{\mathbb{L}},\varepsilon^y_h,\varepsilon^p_h,\varepsilon^{\widehat{y}}_h;-\bm \Pi_{\mathbb{K}} \mathbb{A},\bm \Pi_{ V} \bm \Phi,\bm \Pi_W \Psi, \bm P_{\bm M} \bm \Phi)\\
	&= -(\varepsilon_h^{\mathbb{L}},\bm \Pi_{\mathbb{K}} \mathbb{A})_{\mathcal{T}_h}-(\varepsilon^{\bm y}_h,\nabla\cdot \bm \Pi_{\mathbb{K}} \mathbb{A})_{\mathcal{T}_h}+\langle \varepsilon^{\widehat{\bm y}}_h, \bm \Pi_{\mathbb{K}} \mathbb{A} \bm{n}\rangle_{\partial\mathcal{T}_h}-(\nabla\cdot\varepsilon_h^{\mathbb{L}}, \bm \Pi_{V} \bm \Phi)_{\mathcal T_h}-(\varepsilon^p_h, \nabla\cdot \bm \Pi_{ V} \bm \Phi)_{\mathcal{T}_h}\\
	&\quad +\langle \varepsilon^p_h\bm n+h^{-1}(\bm P_{ M} \varepsilon^{\bm y}_h -\varepsilon^{\widehat{\bm y}}_h), \bm \Pi_{ V} \bm \Phi\rangle_{\partial {\mathcal{T}_h}} - (\varepsilon^{\bm y}_h,\nabla  \Pi_W \Psi)_{\mathcal T_h} +\langle \varepsilon^{\widehat{\bm y}}_h\cdot\bm n,\Pi_W \Psi\rangle_{\partial\mathcal T_h}\\
	&\quad +\langle\varepsilon_h^{\mathbb{L}} \bm n- \varepsilon^p_h \bm n-h^{-1} (\bm P_{ M} \varepsilon^{\bm y}_h-\varepsilon^{\widehat{\bm y}}_h),\bm P_{ M} \bm \Phi \rangle_{\partial\mathcal T_h}\\
	&=-(\varepsilon_h^{\mathbb{L}},\mathbb{A})_{\mathcal{T}_h}+(\varepsilon^{\bm y}_h,\nabla\cdot  \delta^{\mathbb{A}})_{\mathcal{T}_h}-(\varepsilon^{\bm y}_h,\nabla\cdot  \mathbb{A})_{\mathcal{T}_h}-\langle \varepsilon^{\widehat{\bm y}}_h, \delta^{\mathbb{A}} \bm{n}\rangle_{\partial\mathcal{T}_h} - \langle \varepsilon_h^{\mathbb L} \bm n, \bm \Phi\rangle_{\partial \mathcal T_h}\\
	&\quad + (\varepsilon_h^{\mathbb L},\nabla\bm{\Phi})_{\mathcal T_h}+(\varepsilon^p_h, \nabla\cdot \delta^{\bm \Phi})_{\mathcal{T}_h} +  \langle \varepsilon^p_h\bm n+h^{-1}(\bm P_{ M} \varepsilon^{\bm y}_h -\varepsilon^{\widehat{\bm y}}_h), \bm \Phi\rangle_{\partial {\mathcal{T}_h}}\\
	&\quad-\langle  \varepsilon^p_h\bm n+h^{-1}(\bm P_{ M} \varepsilon^{\bm y}_h -\varepsilon^{\widehat{\bm y}}_h), \delta^{\bm \Phi}\rangle_{\partial {\mathcal{T}_h}}-(\varepsilon^{\bm y}_h,\nabla \Psi)_{\mathcal T_h} + (\varepsilon^{\bm y}_h,\nabla \delta^\Psi)_{\mathcal T_h}\\
	&\quad-\langle \varepsilon^{\widehat{\bm y}}_h\cdot\bm n,\delta^\Psi\rangle_{\partial\mathcal T_h}+\langle\varepsilon_h^{\mathbb{L}} \bm n- \varepsilon^p_h \bm n-h^{-1} (\bm P_{ M} \varepsilon^{\bm y}_h-\varepsilon^{\widehat{\bm y}}_h),\bm \Phi \rangle_{\partial\mathcal T_h}\\
	&=-(\varepsilon_h^{\mathbb{L}},\mathbb{A}-\nabla \bm\Phi)_{\mathcal{T}_h}+(\varepsilon^{\bm y}_h,\nabla\cdot  \delta^{\mathbb{A}})_{\mathcal{T}_h}-(\varepsilon^{\bm y}_h,\nabla\cdot  \mathbb{A}+\nabla\Psi)_{\mathcal{T}_h}-\langle \varepsilon^{\widehat{\bm y}}_h, \delta^{\mathbb{A}} \bm{n}\rangle_{\partial\mathcal{T}_h}+(\varepsilon^p_h, \nabla\cdot \delta^{\bm \Phi})_{\mathcal{T}_h} \\
	&\quad -\langle  \varepsilon^p_h\bm n+h^{-1}(\bm P_{ M} \varepsilon^{\bm y}_h -\varepsilon^{\widehat{\bm y}}_h), \delta^{\bm \Phi}\rangle_{\partial {\mathcal{T}_h}}+ (\varepsilon^{\bm y}_h,\nabla \delta^\Psi)_{\mathcal T_h}-\langle \varepsilon^{\widehat{\bm y}}_h\cdot\bm n,\delta^\Psi\rangle_{\partial\mathcal T_h}\\
	&=(\varepsilon^{\bm y}_h,\nabla\cdot  \delta^{\mathbb{A}})_{\mathcal{T}_h} + \|\varepsilon_h^{\bm y}\|_{\mathcal T_h}^2-\langle \varepsilon^{\widehat{\bm y}}_h, \delta^{\mathbb{A}} \bm{n}\rangle_{\partial\mathcal{T}_h} +(\varepsilon^p_h, \nabla\cdot \delta^{\bm \Phi})_{\mathcal{T}_h} \\
	&\quad -\langle  \varepsilon^p_h\bm n+h^{-1}(\bm P_{ M} \varepsilon^{\bm y}_h -\varepsilon^{\widehat{\bm y}}_h), \delta^{\bm \Phi}\rangle_{\partial {\mathcal{T}_h}}  + (\varepsilon^{\bm y}_h,\nabla \delta^\Psi)_{\mathcal T_h} -\langle \varepsilon^{\widehat{\bm y}}_h\cdot\bm n,\delta^\Psi\rangle_{\partial\mathcal T_h}.
	\end{align*}
	Here we used $\varepsilon^{\widehat {\bm y}}_h=0$ on $\varepsilon^{\partial}_h$, $\mathbb A + \Psi\mathbb I \in \mathbb H(\textup{div}, \Omega)$, and $\langle\varepsilon^{\widehat {\bm y}}_h, (\mathbb{A} + \Psi\mathbb I)\bm n\rangle_{\partial\mathcal T_h}=0$, which holds since $\varepsilon^{\widehat {\bm y}}_h$ is a single-valued function on interior edges and $\varepsilon^{\widehat {\bm y}}_h=0$ on $\varepsilon^{\partial}_h$.
	
	Next, integrate by parts to obtain
	\begin{align*}
	(\varepsilon^{\bm y}_h,\nabla\cdot  \delta^{\mathbb{A}})_{\mathcal{T}_h}&=-(\nabla \varepsilon^{\bm y}_h,\delta^{\mathbb{A}})_{\mathcal{T}_h}+\langle \varepsilon^{\bm y}_h,\delta^{\mathbb{A}}\bm n \rangle_{\partial \mathcal{T}_h}=\langle \varepsilon^{\bm y}_h,\delta^{\mathbb{A}}\bm n \rangle_{\partial \mathcal{T}_h},\\
	(\varepsilon^p_h, \nabla\cdot \delta^{\bm \Phi})_{\mathcal{T}_h}&=-(\nabla \varepsilon^p_h, \delta^{\bm \Phi})_{\mathcal{T}_h}+\langle \varepsilon^p_h, \delta^{\bm \Phi}\cdot\bm n\rangle_{\partial \mathcal{T}_h}=\langle \varepsilon^p_h, \delta^{\bm \Phi}\cdot\bm n\rangle_{\partial \mathcal{T}_h},\\
	(\varepsilon_h^{\bm y},\nabla \delta^{\Psi})_{\mathcal{T}_h}&=\langle \varepsilon_h^{\bm y}\cdot \bm n,\delta^{\Psi} \rangle_{\partial \mathcal{T}_h} -(\nabla\cdot \varepsilon_h^{\bm y},\delta^{\Psi})_{ \mathcal{T}_h}=\langle \varepsilon_h^{\bm y}\cdot \bm n,\delta^{\Psi} \rangle_{\partial \mathcal{T}_h}.
	\end{align*}
	Then
	\begin{align}\label{first_eq}
	\begin{split}
	\hspace{1em}&\hspace{-1em}  \mathscr B(\varepsilon_h^{\mathbb{L}},\varepsilon^{\bm y}_h,\varepsilon^p_h,\varepsilon^{\widehat{\bm y}}_h;-\bm \Pi_{\mathbb{K}} \mathbb{A},\bm\Pi_{ V} \bm \Phi, \Pi_W \Psi,\bm P_{ M} \bm \Phi)\\
	&=\|\varepsilon_h^{\bm y}\|_{\mathcal{T}_h}^2 -\langle h^{-1}(\bm P_{ M} \varepsilon_h^{\bm y} -\varepsilon_h^{\widehat{\bm y}}) , \delta^{\bm \Phi} \rangle_{\partial \mathcal {T}_h} +\langle \varepsilon_h^{\bm y} -\varepsilon_h^{\widehat{\bm y}} ,\delta^{\mathbb{A}} \bm n+\delta^{\Psi}\bm n\rangle_{\partial \mathcal{T}_h}.
	\end{split}
	\end{align}
	On the other hand, using $\bm \Phi =0 $ on $\varepsilon_h^\partial$ and the error equation \eqref{error_eq_lyp} gives
	\begin{align}\label{second_eq}
	\mathscr B(\varepsilon_h^{\mathbb{L}},\varepsilon^{\bm y}_h,\varepsilon^p_h,\varepsilon^{\widehat{\bm y}}_h;-\bm \Pi_{\mathbb{K}} \mathbb{A},\bm\Pi_{ V} \bm \Phi, \Pi_W \Psi,\bm P_{ M} \bm \Phi)=\langle \widehat{\bm \delta}_1, \bm \Pi_{ V} \bm \Phi - \bm P_{ M} \bm \Phi \rangle_{\partial \mathcal{T}_h}.
	\end{align}

		Next, we split the proof into three cases.
		
		(1) When $s_{\mathbb L} = s_p>1/2$,  we note that
		\begin{align*}
		  \langle \widehat{\bm \delta}_1, \bm P_{ M} \bm \Phi \rangle_{\partial \mathcal{T}_h}
		&= \langle -\bm \Pi_{\mathbb K} \mathbb{L} \bm n +\Pi_W p \bm n+h^{-1} \bm P_{ M} (\bm \Pi_{ V} \bm y -\bm y), \bm P_{ M} \bm \Phi \rangle_{\partial \mathcal{T}_h} +\langle \mathbb{L} \bm n-p\bm n, \bm P_{ M} \bm \Phi \rangle_{\partial \mathcal{T}_h}\\
		&=\langle -\bm \Pi_{\mathbb K} \mathbb{L} \bm n +\Pi_W p \bm n +h^{-1} \bm P_{ M} (\bm \Pi_{ V} \bm y -\bm y), \bm \Phi \rangle_{\partial \mathcal{T}_h} +\langle \mathbb{L} \bm n-p\bm n, \bm \Phi \rangle_{\partial \mathcal{T}_h}\\
		&=\langle \widehat{\bm \delta}_1, \bm \Phi \rangle_{\partial \mathcal{T}_h}.
		\end{align*}
		Here we used the fact $\langle \mathbb{L} \bm n-p\bm n, \bm \Phi \rangle_{\partial \mathcal{T}_h} =0 $ and  $\langle \mathbb{L} \bm n-p\bm n, \bm P_{M} \bm \Phi \rangle_{\partial \mathcal{T}_h}=0$ since $\mathbb L - p\mathbb I \in \mathbb H(\textup{div},\Omega)$ and $\bm \Phi = \bm 0$ on $\varepsilon_h^\partial$.
		
		Compare the equations \eqref{first_eq} and \eqref{second_eq} to obtain
		\begin{align*}
		\|\varepsilon_h^{\bm y}\|_{\mathcal{T}_h}^2&=-\langle \varepsilon_h^{\bm y} -\varepsilon_h^{\widehat{\bm y}} ,\delta^{\mathbb{A}} \bm n+\delta^{\Psi}\bm n\rangle_{\partial \mathcal{T}_h} - \langle \widehat{\bm \delta}_1,\delta^{\bm \Phi} \rangle_{\partial \mathcal{T}_h} +\langle h^{-1}(\bm P_{ M} \varepsilon_h^{\bm y} -\varepsilon_h^{\widehat{\bm y}}) , \delta^{\bm \Phi} \rangle_{\partial \mathcal {T}_h} \\
		&\lesssim h^{-\frac 1 2}\|\varepsilon_h^{\bm y} -\varepsilon_h^{\widehat{\bm y}}\|_{\partial \mathcal {T}_h} h^{\frac 1 2}(\|\delta^{\mathbb A}\|_{\partial \mathcal {T}_h} +\|\delta^{\Psi}\|_{\partial \mathcal {T}_h} ) + \|\widehat{\bm \delta}_1\|_{\partial \mathcal {T}_h} \|\delta^{\bm \Phi}\|_{\partial \mathcal {T}_h} + h^{-\frac 1 2}\|\bm P_M \varepsilon_h^{\bm y} -\varepsilon_h^{\widehat{\bm y}}\|_{\partial \mathcal {T}_h}  h^{-\frac 1 2 }\|\delta^{\bm \Phi}\|_{\partial \mathcal {T}_h}\\
		&\lesssim (h^{s_{\mathbb L}+1}\norm{\mathbb L}_{s^{\mathbb L},\Omega}+  h^{s_{p}+1}\norm{p}_{s^{p},\Omega} + h^{s_{\bm y}}\norm{\bm y}_{s^{\bm y},\Omega})\|\varepsilon_h^{\bm y}\|_{\mathcal{T}_h},
		\end{align*}
		where in the last step we used  \Cref{grad_y_h,error_energy_norm}, the standard projection error estimates (\ref{standard_L2_projection}) and the a priori estimate (\ref{regularity_dual}) for the dual problem.

	(2) When $0<s_{\mathbb L} = s_p \le 1/2$, hence the quantity $\|(\mathbb L-p\mathbb I)n \|_{\partial \mathcal T_h}$ is not well defined and the analysis in (1) is not applicable. We need to refine the analysis.
	
	\hspace{1em} (i) If $k\ge 1$, compare the equations \eqref{first_eq} and \eqref{second_eq}, and use the same technique as in the proof of \Cref{error_energy_norm} to obtain
	\begin{align*}
	\|\varepsilon_h^{\bm y}\|_{\mathcal{T}_h}^2&=-\langle \varepsilon_h^{\bm y} -\varepsilon_h^{\widehat{\bm y}} ,\delta^{\mathbb{A}} \bm n+\delta^{\Psi}\bm n\rangle_{\partial \mathcal{T}_h} {+ \langle \widehat{\bm \delta}_1,\bm \Pi_{ V} \bm \Phi - \bm P_{ M} \bm \Phi\rangle_{\partial \mathcal{T}_h}} +\langle h^{-1}(\bm P_{ M} \varepsilon_h^{\bm y} -\varepsilon_h^{\widehat{\bm y}}) , \delta^{\bm \Phi} \rangle_{\partial \mathcal {T}_h} \\
	&=-\langle \varepsilon_h^{\bm y} -\varepsilon_h^{\widehat{\bm y}} ,\delta^{\mathbb{A}} \bm n+\delta^{\Psi}\bm n\rangle_{\partial \mathcal{T}_h} {+ \langle \delta^{\mathbb{L}}\bm n-\delta^p \bm n,\bm \Pi_{ V} \bm \Phi - \bm P_{ M} \bm \Phi\rangle_{\partial \mathcal{T}_h}}\\
	&\quad  {-\langle h^{-1} \bm P_{M} \delta^{\bm y},\bm \Pi_{ V} \bm \Phi - \bm P_{ M} \bm \Phi\rangle_{\partial \mathcal{T}_h}}+\langle h^{-1}(\bm P_{ M} \varepsilon_h^{\bm y} -\varepsilon_h^{\widehat{\bm y}}) , \delta^{\bm \Phi} \rangle_{\partial \mathcal {T}_h} \\
	&\lesssim h^{-\frac 1 2}\|\varepsilon_h^{\bm y} -\varepsilon_h^{\widehat{\bm y}}\|_{\partial \mathcal {T}_h} h^{\frac 1 2}(\|\delta^{\mathbb A}\|_{\partial \mathcal {T}_h} +\|\delta^{\Psi}\|_{\partial \mathcal {T}_h} ) \\
	&\quad +\|\bm \Pi_{ V} \bm \Phi - \bm P_{ M} \bm \Phi\|_{\partial \mathcal {T}_h} (\|\delta^{\mathbb L}\|_{\mathcal{T}_h}+\|\delta^{p}\|_{\mathcal{T}_h} + h\|\nabla\cdot({\mathbb L}-p\mathbb I)\|_{\mathcal{T}_h})\\
	&\quad +\|\bm \Pi_{ V} \bm \Phi - \bm P_{ M} \bm \Phi\|_{\partial \mathcal {T}_h} \|\bm \Pi_{\mathbb K} (\mathbb L-p\mathbb I) -\bm \Pi_{\mathbb K}^0 (\mathbb L-p\mathbb I) \|_{\mathcal{T}_h}\\
	&\quad + h^{-1} \|\bm P_{M} \delta^{\bm y}\|_{\partial \mathcal T_h}\|\bm \Pi_{ V} \bm \Phi - \bm P_{ M} \bm \Phi\|_{\partial \mathcal {T}_h}  + h^{-\frac 1 2}\|\bm P_M \varepsilon_h^{\bm y} -\varepsilon_h^{\widehat{\bm y}}\|_{\partial \mathcal {T}_h}  h^{-\frac 1 2 }\|\delta^{\bm \Phi}\|_{\partial \mathcal {T}_h}\\
	&\lesssim (h^{s_{\mathbb L}+1}\norm{\mathbb L}_{s^{\mathbb L},\Omega}+  h^{s_{p}+1}\norm{p}_{s^{p},\Omega} + h^{s_{\bm y}}\norm{\bm y}_{s^{\bm y},\Omega})\|\varepsilon_h^{\bm y}\|_{\mathcal{T}_h}.
	\end{align*}
	
	\hspace{1em} (ii) If $k= 0$, we note that
		\begin{align*}
		  \langle \widehat{\bm \delta}_1, \bm P_{ M} \bm \Phi \rangle_{\partial \mathcal{T}_h}&= \langle -\bm \Pi_{\mathbb K} \mathbb{L} \bm n +\Pi_W p \bm n+h^{-1} \bm P_{ M} (\bm \Pi_{ V} \bm y -\bm y), \bm P_{ M} \bm \Phi \rangle_{\partial \mathcal{T}_h} +\langle \mathbb{L} \bm n-p\bm n, \bm P_{ M} \bm \Phi \rangle_{\partial \mathcal{T}_h}\\
		&=\langle -\bm \Pi_{\mathbb K} \mathbb{L} \bm n +\Pi_W p \bm n +h^{-1} \bm P_{ M} (\bm \Pi_{ V} \bm y -\bm y), \bm \Phi \rangle_{\partial \mathcal{T}_h} +\langle \mathbb{L} \bm n-p\bm n, \bm \Phi \rangle_{\partial \mathcal{T}_h}\\
		&=\langle \widehat{\bm \delta}_1, \bm \Phi \rangle_{\partial \mathcal{T}_h}.
		\end{align*}
		Here we used the fact $\langle \mathbb{L} \bm n-p\bm n, \bm \Phi \rangle_{\partial \mathcal{T}_h} =0 $ and  $\langle \mathbb{L} \bm n-p\bm n, \bm P_{M} \bm \Phi \rangle_{\partial \mathcal{T}_h}=0$ since $\mathbb L - p\mathbb I \in \mathbb H(\textup{div},\Omega)$ and $\bm \Phi = \bm 0$ on $\varepsilon_h^\partial$.
		
		Compare the equations \eqref{first_eq} and \eqref{second_eq} to obtain
		\begin{align*}
		\|\varepsilon_h^{\bm y}\|_{\mathcal{T}_h}^2&=-\langle \varepsilon_h^{\bm y} -\varepsilon_h^{\widehat{\bm y}} ,\delta^{\mathbb{A}} \bm n+\delta^{\Psi}\bm n\rangle_{\partial \mathcal{T}_h} - \langle \widehat{\bm \delta}_1,\delta^{\bm \Phi} \rangle_{\partial \mathcal{T}_h} +\langle h^{-1}(\bm P_{ M} \varepsilon_h^{\bm y} -\varepsilon_h^{\widehat{\bm y}}) , \delta^{\bm \Phi} \rangle_{\partial \mathcal {T}_h} \\
		& = -\langle \varepsilon_h^{\bm y} -\varepsilon_h^{\widehat{\bm y}} ,\delta^{\mathbb{A}} \bm n+\delta^{\Psi}\bm n\rangle_{\partial \mathcal{T}_h} - \langle  \delta^{\mathbb{L}}\bm n-\delta^p \bm n, \delta^{\bm \Phi} \rangle_{\partial \mathcal{T}_h} \\
		&\quad + \langle  h^{-1} \bm P_{M} \delta^{\bm y}, \delta^{\bm \Phi} \rangle_{\partial \mathcal{T}_h}+\langle h^{-1}(\bm P_{ M} \varepsilon_h^{\bm y} -\varepsilon_h^{\widehat{\bm y}}) , \delta^{\bm \Phi} \rangle_{\partial \mathcal {T}_h} \\
		& = -\langle \varepsilon_h^{\bm y} -\varepsilon_h^{\widehat{\bm y}} ,\delta^{\mathbb{A}} \bm n+\delta^{\Psi}\bm n\rangle_{\partial \mathcal{T}_h} - (\delta^{\mathbb{L}}-\delta^p \mathbb I, \nabla \delta^{\bm \Phi})_{\mathcal T_h} - (\nabla\cdot(\delta^{\mathbb{L}}-\delta^p \mathbb I),  \delta^{\bm \Phi})_{\mathcal T_h}\\
		&\quad + \langle  h^{-1} \bm P_{M} \delta^{\bm y}, \delta^{\bm \Phi} \rangle_{\partial \mathcal{T}_h}+\langle h^{-1}(\bm P_{ M} \varepsilon_h^{\bm y} -\varepsilon_h^{\widehat{\bm y}}) , \delta^{\bm \Phi} \rangle_{\partial \mathcal {T}_h} \\
		& = -\langle \varepsilon_h^{\bm y} -\varepsilon_h^{\widehat{\bm y}} ,\delta^{\mathbb{A}} \bm n+\delta^{\Psi}\bm n\rangle_{\partial \mathcal{T}_h} - (\delta^{\mathbb{L}}-\delta^p \mathbb I, \nabla \delta^{\bm \Phi})_{\mathcal T_h} - (\nabla\cdot(\mathbb{L}- p \mathbb I),  \delta^{\bm \Phi})_{\mathcal T_h}\\
		&\quad + \langle  h^{-1} \bm P_{M} \delta^{\bm y}, \delta^{\bm \Phi} \rangle_{\partial \mathcal{T}_h}+\langle h^{-1}(\bm P_{ M} \varepsilon_h^{\bm y} -\varepsilon_h^{\widehat{\bm y}}) , \delta^{\bm \Phi} \rangle_{\partial \mathcal {T}_h} \\
		&\lesssim (h^{s_{\mathbb L}+1}\norm{\mathbb L}_{s^{\mathbb L},\Omega}+  h^{s_{p}+1}\norm{p}_{s^{p},\Omega} + h^{s_{\bm y}}\norm{\bm y}_{s^{\bm y},\Omega})\|\varepsilon_h^{\bm y}\|_{\mathcal{T}_h}.
		\end{align*}
\end{proof}

\Cref{error_energy_norm}, \Cref{pressure_estimate_p_1}, \Cref{error_y_lyp}, and the triangle inequality yield optimal convergence rates for $\|\mathbb L- \mathbb L_h( u)\|_{\mathcal T_h}$, $\| p - p_h( u)\|_{\mathcal T_h}$ and $\| \bm y - \bm y_h( u)\|_{\mathcal T_h}$:

	\begin{lemma}\label{lemma:step4_conv_rates}
		Let $\mathcal M$ be defined as in \Cref{error_energy_norm}. Then we have
		\begin{align*}
		h\|\mathbb{L} -\mathbb{L}_h( u)\|_{\mathcal{T}_h}+h\|p -p_h( u)\|_{\mathcal{T}_h} +	\|\bm y-\bm y_h( u)\|_{\mathcal{T}_h}\lesssim h\mathcal M.
		\end{align*}
	\end{lemma}

\subsubsection{Step 5: The error equation for part 2 of the auxiliary problem \eqref{HDG_u_b}.}
We continue to focus on the solution of the auxiliary problem and the solution of the mixed formultion \eqref{TCOSE1}-\eqref{TCOASE3}  of the optimality system.  Next, we consider the dual variables, i.e., $\mathbb{G}$, $\bm z$ and $q$.  We estimate the errors using $ L^2 $ projections, and we use the following notation.
\begin{equation}\label{notation_3}
\begin{aligned}
\delta^{\mathbb{G}} &=\mathbb{G}-{\bm \Pi_{\mathbb{K}}} \mathbb{G},  &  \varepsilon^{\mathbb{G}}_h &={\bm \Pi_{\mathbb{K}}} \mathbb{G}-\mathbb{G}_h( u),\\
\delta^{\bm z}&=\bm z- \bm{\Pi_V} \bm z, &  \varepsilon^{\bm z}_h &= \bm{\Pi_V} \bm z-\bm z_h( u),\\
\delta^q&=q- {\Pi_W} q, &  \varepsilon^{q}_h &= {\Pi_W} q - q_h( u),\\
\delta^{\widehat {\bm z}} &= \bm z- \bm P_{ M} \bm z,  &  \varepsilon^{\widehat {\bm z}}_h &= \bm P_{ M} \bm z-\widehat{\bm z}_h( u),\\
\widehat {\bm \delta}_2 &= \delta^{\mathbb{G}}\bm n+\delta^q \bm n- h^{-1} \bm P_{ M} \delta^{\bm z}, & &
\end{aligned}
\end{equation}
where $\widehat {\bm z}_h( u) = \widehat {\bm z}_h^o(u)$ on $\varepsilon_h^o$ and $\widehat {\bm z}_h( u) =  0$ on $\varepsilon_h^{\partial}$, which gives $\varepsilon_h^{\widehat {\bm z}} = \bm 0$ on $\varepsilon_h^{\partial}$.

The proof of the following result is similar to the proof of \Cref{error_Lpz}, and is omitted.
\begin{lemma}\label{error_eq_Gqz}
	We have
	\begin{align} \label{error_B_2}
	 \mathscr B(\varepsilon_h^{\mathbb{G}},\varepsilon_h^{\bm z},-\varepsilon^q_h,\varepsilon_h^{\widehat{\bm z}};\mathbb T_2,\bm v_2,w_2,\bm\mu_2)=\langle\widehat{\bm \delta}_2,\bm v_2\rangle_{\partial \mathcal{T}_h}-\langle \widehat{\bm \delta}_2,\bm \mu_2\rangle_{\partial \mathcal{T}_h\backslash \varepsilon_h^\partial}+(\bm y-\bm y_h( u),\bm v_2)_{\mathcal T_h}.
	\end{align}
\end{lemma}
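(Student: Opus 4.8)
The plan is to follow the proof of \Cref{error_Lpz} line by line, replacing the primal quadruple $(\mathbb L, \bm y, p, \widehat{\bm y})$ by the dual quadruple $(\mathbb G, \bm z, -q, \widehat{\bm z})$ and keeping careful track of the sign flip induced by the $-q$ entry in the third slot of $\mathscr B$. First I would evaluate $\mathscr B(\bm\Pi_{\mathbb K}\mathbb G, \bm\Pi_V\bm z, -\Pi_W q, \bm P_M\bm z; \mathbb T_2, \bm v_2, w_2, \bm\mu_2)$ by expanding the definition \eqref{def_B}. Because $\mathscr B$ enters the pressure variable through the combination $w_h\bm n$, substituting $-\Pi_W q$ produces the terms $+\Pi_W q\,\bm n$, which is exactly the sign appearing in the mixed formulation \eqref{TCOASE2} and in the definition of $\widehat{\bm\delta}_2 = \delta^{\mathbb G}\bm n + \delta^q\bm n - h^{-1}\bm P_M\delta^{\bm z}$ in \eqref{notation_3}.

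Next I would use the defining properties \eqref{projection_operator} of the $L^2$ projections $\bm\Pi_{\mathbb K}$, $\bm\Pi_V$, $\Pi_W$ together with \eqref{def_P_M} to rewrite every projected volume term as the corresponding exact term, pushing the consistency errors $\delta^{\mathbb G}$, $\delta^{\bm z}$, $\delta^q$, $\delta^{\widehat{\bm z}}$ onto the element boundaries. This step collects precisely the boundary combination $\widehat{\bm\delta}_2$ tested against $\bm v_2$ on $\partial\mathcal T_h$ and against $\bm\mu_2$ on $\partial\mathcal T_h\backslash\varepsilon_h^\partial$, just as $\widehat{\bm\delta}_1$ emerged in \Cref{error_Lpz}.

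I would then invoke the equations satisfied by the exact dual solution $(\mathbb G, \bm z, q)$. Writing the mixed optimality system \eqref{TCOASE1}--\eqref{TCOASE3} element by element and using that $\mathbb G + q\mathbb I \in \mathbb H(\textup{div},\Omega)$ (from \Cref{T2.9}), so that its normal trace is single-valued across interior faces, reduces $\mathscr B(\bm\Pi_{\mathbb K}\mathbb G, \bm\Pi_V\bm z, -\Pi_W q, \bm P_M\bm z; \cdot)$ to $(\bm y - \bm y_d, \bm v_2)_{\mathcal T_h}$ plus the boundary terms involving $\widehat{\bm\delta}_2$; here I also use the homogeneous datum $\widehat{\bm z}(u) = \bm 0$ on $\varepsilon_h^\partial$ (reflecting $\bm z = \bm 0$ on $\Gamma$), which is what removes any boundary-control contribution that was present in part~1. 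Finally, subtracting part~2 of the auxiliary problem \eqref{HDG_u_b} yields \eqref{error_B_2}; the volume term $(\bm y - \bm y_h(u), \bm v_2)_{\mathcal T_h}$ on the right-hand side is exactly the difference $(\bm y - \bm y_d, \bm v_2)_{\mathcal T_h} - (\bm y_h(u) - \bm y_d, \bm v_2)_{\mathcal T_h}$ between the exact and auxiliary forcing terms.

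The one point requiring care---rather than any genuine obstacle---is the bookkeeping of signs coming from the $-q$ placement and from the $+q\mathbb I$ convention, which must be matched consistently across \eqref{def_B}, \eqref{TCOASE2}, and \eqref{notation_3}; once this is done, the manipulations are identical to those in \Cref{error_Lpz}, and the inhomogeneous forcing $(\bm y_h(u) - \bm y_d)$ of the auxiliary problem is precisely what distinguishes this error equation from the homogeneous primal one.
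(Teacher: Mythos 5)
Your proposal is correct and is exactly the argument the paper intends: the paper omits this proof, stating only that it is ``similar to the proof of \Cref{error_Lpz},'' and your reconstruction mirrors that proof with the right substitutions. You also correctly identify the three points where the dual case differs---the sign bookkeeping from the $-q$ slot matching $\widehat{\bm\delta}_2$ in \eqref{notation_3}, the vanishing boundary terms from $\bm z=\bm 0$ on $\Gamma$ (so $\varepsilon_h^{\widehat{\bm z}}=\bm 0$ on $\varepsilon_h^\partial$ and no control contribution), and the inhomogeneous forcing whose subtraction against \eqref{HDG_u_b} yields $(\bm y-\bm y_h(u),\bm v_2)_{\mathcal T_h}$.
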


\subsubsection{Step 6: Estimate for $\varepsilon_h^{\mathbb{G}}$ and  $\varepsilon_h^z$.}
To estimate $\varepsilon_h^{\mathbb{G}}$, we use the following discrete Poincar\'{e} inequality from \cite[Proposition A.2]{MR3626531}.
\begin{lemma}\label{lemma:discr_Poincare_ineq}
	We have
	\begin{align}\label{poin_in}
	\|\varepsilon_h^{\bm z}\|_{\mathcal T_h} \le C(\|\nabla \varepsilon_h^{\bm z}\|_{\mathcal T_h} + h^{-\frac 1 2} \|\varepsilon_h^{\bm z} - \varepsilon_h^{\widehat {\bm z}}\|_{\partial\mathcal T_h}).
	\end{align}
\end{lemma}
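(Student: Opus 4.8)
The plan is to establish this discrete Poincar\'e inequality by a duality argument against an auxiliary vector Poisson problem, exploiting the convexity of $\Omega$ for elliptic regularity. First I would let $\bm\phi$ solve $-\Delta\bm\phi = \varepsilon_h^{\bm z}$ in $\Omega$ with $\bm\phi = \bm 0$ on $\Gamma$ (componentwise); since $\Omega$ is convex we have the $\bm H^2$ regularity estimate $\|\bm\phi\|_{2,\Omega} \le C\|\varepsilon_h^{\bm z}\|_{\mathcal T_h}$. Testing this equation against $\varepsilon_h^{\bm z}$ and integrating by parts element by element gives
\[
\|\varepsilon_h^{\bm z}\|_{\mathcal T_h}^2 = (\nabla\varepsilon_h^{\bm z}, \nabla\bm\phi)_{\mathcal T_h} - \langle \varepsilon_h^{\bm z}, \partial_{\bm n}\bm\phi\rangle_{\partial\mathcal T_h}.
\]

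The next step is to insert the numerical trace $\varepsilon_h^{\widehat{\bm z}}$ into the boundary term. The crucial observation is that $\langle \varepsilon_h^{\widehat{\bm z}}, \partial_{\bm n}\bm\phi\rangle_{\partial\mathcal T_h} = 0$: on each interior face the single-valued quantity $\varepsilon_h^{\widehat{\bm z}}$ factors out and multiplies $\partial_{\bm n}\bm\phi$, whose two contributions from the neighboring elements cancel because $\nabla\bm\phi$ is continuous ($\bm\phi\in\bm H^2(\Omega)$) while the outward normals are opposite; on boundary faces $\varepsilon_h^{\widehat{\bm z}} = \bm 0$ by definition of $\widehat{\bm z}_h(u)$ on $\varepsilon_h^\partial$. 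Subtracting this vanishing term yields
\[
\|\varepsilon_h^{\bm z}\|_{\mathcal T_h}^2 = (\nabla\varepsilon_h^{\bm z}, \nabla\bm\phi)_{\mathcal T_h} - \langle \varepsilon_h^{\bm z} - \varepsilon_h^{\widehat{\bm z}}, \partial_{\bm n}\bm\phi\rangle_{\partial\mathcal T_h}.
\]
I would then bound the two terms by Cauchy--Schwarz: the volume term by $\|\nabla\varepsilon_h^{\bm z}\|_{\mathcal T_h}\|\bm\phi\|_{1,\Omega}$, and for the face term apply the scaled trace inequality $\|\nabla\bm\phi\|_{\partial\mathcal T_h}^2 \lesssim h^{-1}\|\nabla\bm\phi\|_{\mathcal T_h}^2 + h\,|\bm\phi|_{2,\Omega}^2 \lesssim h^{-1}\|\bm\phi\|_{2,\Omega}^2$ (using quasi-uniformity and $h$ bounded), so that $\|\partial_{\bm n}\bm\phi\|_{\partial\mathcal T_h} \lesssim h^{-1/2}\|\bm\phi\|_{2,\Omega}$. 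Combining these with the regularity estimate gives
\[
\|\varepsilon_h^{\bm z}\|_{\mathcal T_h}^2 \lesssim \Big(\|\nabla\varepsilon_h^{\bm z}\|_{\mathcal T_h} + h^{-1/2}\|\varepsilon_h^{\bm z} - \varepsilon_h^{\widehat{\bm z}}\|_{\partial\mathcal T_h}\Big)\|\varepsilon_h^{\bm z}\|_{\mathcal T_h},
\]
and dividing by $\|\varepsilon_h^{\bm z}\|_{\mathcal T_h}$ finishes the proof.

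I expect the main obstacle to be the cancellation of the $\varepsilon_h^{\widehat{\bm z}}$ contribution, which is exactly where the single-valuedness of the numerical trace on interior faces, its vanishing on $\varepsilon_h^\partial$, and the $\bm H^2$-continuity of $\nabla\bm\phi$ must all be invoked simultaneously. This is also where convexity is essential: without it the regularity $\bm\phi\in\bm H^2(\Omega)$ would fail and the scaled trace estimate would no longer deliver the $h^{-1/2}$ power in the required form. The remaining trace and scaling estimates are routine.
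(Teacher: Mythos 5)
Your proof is correct, but it does not follow the paper's route, because the paper gives no proof at all: Lemma \ref{lemma:discr_Poincare_ineq} is simply imported from \cite[Proposition A.2]{MR3626531}. What you have written is a valid, self-contained duality argument in the classical style of Arnold's DG analysis: the elementwise integration-by-parts identity is right; the insertion of $\varepsilon_h^{\widehat{\bm z}}$ is legitimate because $\langle \varepsilon_h^{\widehat{\bm z}}, \partial_{\bm n}\bm\phi\rangle_{\partial\mathcal T_h}=0$, which uses exactly the three facts you name (single-valuedness of $\varepsilon_h^{\widehat{\bm z}}$ on interior faces, $\varepsilon_h^{\widehat{\bm z}}=\bm 0$ on $\varepsilon_h^\partial$ as guaranteed by the definitions in \eqref{notation_3}, and continuity of $\nabla\bm\phi$ across faces from $\bm\phi\in\bm H^2(\Omega)$); and the scaled trace inequality $\|\partial_{\bm n}\bm\phi\|_{\partial\mathcal T_h}\lesssim h^{-1/2}\|\bm\phi\|_{2,\Omega}$ together with $\|\bm\phi\|_{2,\Omega}\lesssim\|\varepsilon_h^{\bm z}\|_{\mathcal T_h}$ delivers the stated bound after dividing by $\|\varepsilon_h^{\bm z}\|_{\mathcal T_h}$ (trivial if it vanishes). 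Your convexity and quasi-uniformity hypotheses are both standing assumptions of \Cref{sec:analysis}, so nothing extra is being assumed. Two remarks on the comparison. First, your argument correctly exposes that the inequality is \emph{not} true for arbitrary pairs $(\varepsilon_h^{\bm z},\varepsilon_h^{\widehat{\bm z}})$ --- nonzero constants with matching traces make the right-hand side vanish --- so the vanishing of the numerical trace on $\varepsilon_h^\partial$ is an essential hidden hypothesis; the paper's citation leaves this implicit in the notation. Second, what the citation buys over your proof is generality: discrete Poincar\'e--Friedrichs inequalities of this type are usually established by conforming-relaxation or averaging arguments that require only shape regularity of the mesh and no convexity of $\Omega$, whereas your elliptic-duality route is tied to full $\bm H^2$ regularity. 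In the present paper's setting that distinction is immaterial, so your proof would serve as a drop-in replacement for the citation.
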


	\begin{lemma}\label{lemma:step6_main_lemma}
		Let $\mathcal M$ be defined as in \Cref{error_energy_norm}. Then we have
		\begin{align*}
		\|\varepsilon_h^{\mathbb G}\|_{\mathcal{T}_h}+h^{-\frac 1 2}\|{\bm P_M\varepsilon_h^{\bm z}-\varepsilon_h^{\widehat{\bm z}}}\|_{\partial \mathcal T_h}+\|\varepsilon_h^{\bm z}\|_{\mathcal{T}_h} \lesssim h\mathcal M + \mathcal N,
		\end{align*}
		where  $\mathcal N =  h^{s_{\mathbb G}}\norm{\mathbb G}_{s^{\mathbb G},\Omega}+  h^{s_{q}}\norm{q}_{s^{q},\Omega}+ h^{s_{\bm z}-1}\norm{\bm z}_{s^{\bm z},\Omega}$.

	\end{lemma}

\begin{proof}
	The inequality in  \Cref{grad_y_h} holds with $ (\mathbb{L},\bm y, \widehat {\bm y}) $ replaced by $ (\mathbb{G},\bm z,\widehat{\bm z} ) $, which gives
	\begin{align}\label{grad_z_h}
	\|\nabla \varepsilon_h^{\bm z}\|_{\mathcal{T}_h} +h^{-\frac{1}{2}}\|\varepsilon_h^{\bm z} -\varepsilon_h^{\widehat{\bm z}}\|_{\partial \mathcal{T}_h} \lesssim \|\varepsilon_h^{\mathbb{G}}\|_{\mathcal{T}_h} +h^{-\frac{1}{2}} \|\bm P_{ M} \varepsilon_h^{\bm z} -\varepsilon_h^{\widehat{\bm z}}\|_{\partial \mathcal{T}_h}.
	\end{align}
	Next, since $\varepsilon_h^{\widehat{\bm z}}=0$ on $\varepsilon_h^\partial$, the energy identity for $\mathscr B$ in  \Cref{energy_norm} gives
	\begin{align}
	\mathscr B(\varepsilon_h^{\mathbb{G}},\varepsilon_h^{\bm z},-\varepsilon_h^q,\varepsilon_h^{\widehat{\bm z}};\varepsilon_h^{\mathbb{G}},\varepsilon_h^{\bm z},-\varepsilon_h^q,\varepsilon_h^{\widehat{\bm z}})=\|\varepsilon_h^{\mathbb{G}}\|^2_{\mathcal T_h}+h^{-1}\|\bm P_{ M} \varepsilon_h^{\bm z}-\varepsilon_h^{\widehat{\bm z}}\|^2_{\partial\mathcal T_h}.
	\end{align}
	Using $(\mathbb T_2,\bm v_2,w_2,\bm \mu_2)=(\varepsilon_h^{\mathbb{G}},\varepsilon_h^{\bm z},\varepsilon_h^q,\varepsilon_h^{\widehat{\bm z}})$ in the error equation \eqref{error_B_2} gives
	\begin{align*}
	\|\varepsilon_h^{\mathbb{G}}\|^2_{\mathcal T_h}+h^{-1}\|\bm P_{ M} \varepsilon_h^{\bm z}-\varepsilon_h^{\widehat{\bm z}}\|^2_{\partial\mathcal T_h}&=\langle\widehat{\bm\delta}_2,\varepsilon_h^{\bm z} -\varepsilon_h^{\widehat{\bm z}}\rangle_{\partial \mathcal{T}_h}+(\bm y-\bm y_h( u), \varepsilon_h^{\bm z})_{\mathcal T_h}\\
	&=:T_1+T_2.
	\end{align*}
	For $ T_1 $, use \eqref{grad_z_h} and Young's inequality:
	\begin{align*}
	T_1&=\langle\widehat{\bm \delta}_2,\varepsilon_h^{\bm z} -\varepsilon_h^{\widehat{\bm z}}\rangle_{\partial \mathcal{T}_h}\le   Ch^{\frac{1}{2}} (\|\delta^{\mathbb G}\|_{\partial \mathcal{T}_h} + \|\delta^{ q}\|_{\partial \mathcal{T}_h}) h^{-\frac{1}{2}}\|\varepsilon_h^{\bm z}-\varepsilon_h^{\widehat{\bm z}}\|_{\partial \mathcal{T}_h}  +C h^{-\frac 1 2}\|\delta^{\bm z}\|_{\partial \mathcal T_h} h^{-\frac 1 2}\|\bm P_{M}  \varepsilon_h^{\bm z}-\varepsilon_h^{\widehat{\bm z}} \|_{\partial \mathcal T_h}\\
	&\le Ch(\|\delta^{\mathbb G}\|_{\partial \mathcal{T}_h}^2 + \|\delta^{q}\|_{\partial \mathcal{T}_h}^2 ) +Ch^{-1}\|\delta^{\bm z}\|_{\partial \mathcal{T}_h}^2 + \frac 1 4 \|\varepsilon_h^{\mathbb G}\|_{\mathcal T_h}^2 + \frac{1}{4h}\|\bm P_{M}  \varepsilon_h^{\bm z}-\varepsilon_h^{\widehat{\bm z}} \|_{\partial \mathcal T_h}^2.
	\end{align*}
	For the term $T_2$, apply \Cref{lemma:discr_Poincare_ineq} and \eqref{grad_z_h} to obtain
	\begin{align*}
	T_2&= (\bm y-\bm y_h( u),\varepsilon_h^{\bm z})_{\mathcal T_h} \le  \|\bm y-\bm y_h( u)\|_{\mathcal T_h} \|\varepsilon_h^{\bm z}\|_{\mathcal T_h}\\
	&\le C\|\bm y-\bm y_h( u)\|_{\mathcal T_h} (\|\nabla \varepsilon_h^{\bm z}\|_{\mathcal T_h} + h^{-\frac 1 2} \|\varepsilon_h^{\bm z} - \varepsilon_h^{\widehat {\bm z}}\|_{\partial\mathcal T_h})\\
	&\le  C\|\bm y-\bm y_h( u)\|_{\mathcal T_h} (\|\varepsilon^{\mathbb G}_h\|_{\mathcal T_h}+h^{-\frac1 2}\|\bm P_{ M}\varepsilon_h^{\bm z}-\varepsilon^{\widehat {\bm z}}_h\|_{\partial\mathcal T_h})\\
	&\le  C\|\bm y-\bm y_h( u)\|_{\mathcal T_h}^2 + \frac 1 4
	\|\varepsilon_h^{\mathbb G}\|_{\mathcal T_h}^2 + \frac 1 {4h} \|{ \bm P_{ M}\varepsilon_h^{\bm z}-\varepsilon_h^{\widehat{\bm z}}}\|_{\partial \mathcal T_h}^2.
	\end{align*}
	These estimates and \Cref{lemma:discr_Poincare_ineq} give our desired result.
\end{proof}

\subsubsection{Step 7: Estimate for $\varepsilon_h^q$. }

	\begin{lemma}\label{pressure_estimate_q_1}
		Let  $\mathcal M$ and $\mathcal N$  be defined as in \Cref{error_y_lyp,lemma:step6_main_lemma}, respectively. Then we have
		\begin{align}\label{error_qo}
		\|{\varepsilon_h^q}\|_\Omega\lesssim  h\mathcal M + \mathcal N.
		\end{align}
	\end{lemma}

\begin{proof}
	By the same argument as in \Cref{pressure_estimate_p_1}, we have
	\begin{align*}
	\|\varepsilon_h^q \|_\Omega \lesssim \sup_{\bm v\in {\bm H}_0^1 (\Omega)\backslash\{0\}} \frac{(\varepsilon_h^q,\nabla\cdot \bm v)_{\Omega}}{\norm{\bm v} _{\bm H ^1 (\Omega)}},
	\end{align*}
	and
	\begin{align*}
	(\varepsilon_h^q ,\nabla\cdot \bm v)_\Omega =-(\nabla \varepsilon_h^q ,\bm \Pi_{ V} \bm v)_{\mathcal{T}_h}+\langle\varepsilon_h^q, \bm v\cdot \bm n\rangle_{\partial \mathcal{T}_h}.
	\end{align*}
	Next, taking  $(\mathbb T_2,\bm v_2, w_2,\bm \mu_2) = (0,\bm \Pi_{V} \bm v,0,0)$ in \Cref{error_eq_Gqz} and using \eqref{def_B2} give
	\begin{align*}
	(\nabla \varepsilon_h^q ,\bm \Pi_{ V} \bm v)_{\mathcal{T}_h}&= - (\nabla\cdot\varepsilon_h^{\mathbb G}, \bm \Pi_{ V} \bm v)_{\mathcal{T}_h}+\langle h^{-1}(\bm P_M\varepsilon_h^{\bm z}-\varepsilon_h^{\widehat{\bm z}}),\bm\Pi_{V}\bm v\rangle_{\partial \mathcal{T}_h} \\
	&\quad- \langle \widehat {\bm \delta}_2, \bm \Pi_{ V} \bm v \rangle_{\partial \mathcal T_h}-(\bm y-\bm y_h( u), \bm \Pi_{ V}  \bm v)_{\mathcal T_h}.
	\end{align*}
	These equalities give
	\begin{align*}
(\varepsilon_h^q ,\nabla\cdot \bm v)_\Omega&=  (\nabla\cdot\varepsilon_h^{\mathbb G}, \bm \Pi_{ V} \bm v)_{\mathcal{T}_h}-\langle h^{-1}(\bm P_M\varepsilon_h^{\bm z}-\varepsilon_h^{\widehat{\bm z}}),\bm\Pi_{V}\bm v\rangle_{\partial \mathcal{T}_h}\\
	&\quad+\langle\varepsilon_h^q, \bm v\cdot \bm n\rangle_{\partial \mathcal{T}_h} + \langle \widehat {\bm \delta}_2, \bm \Pi_{ V} \bm v \rangle_{\partial \mathcal T_h}+(\bm y-\bm y_h( u), \bm \Pi_{ V}  \bm v)_{\mathcal T_h}\\
	&=  (\nabla\cdot\varepsilon_h^{\mathbb G}, \bm v)_{\mathcal{T}_h}- \langle h^{-1}(\bm P_M\varepsilon_h^{\bm z}-\varepsilon_h^{\widehat{\bm z}}),\bm\Pi_{V}\bm v\rangle_{\partial \mathcal{T}_h}\\
	&\quad+\langle\varepsilon_h^q, \bm v\cdot \bm n\rangle_{\partial \mathcal{T}_h} + \langle \widehat {\bm \delta}_2, \bm \Pi_{ V} \bm v \rangle_{\partial \mathcal T_h}+(\bm y-\bm y_h( u), \bm \Pi_{ V}  \bm v)_{\mathcal T_h}\\
	&= - (\varepsilon_h^{\mathbb G}, \nabla \bm v)_{\mathcal{T}_h}-\langle h^{-1}(\bm P_M\varepsilon_h^{\bm z}-\varepsilon_h^{\widehat{\bm z}}),\bm\Pi_{V}\bm v\rangle_{\partial \mathcal{T}_h}\\
	&\quad +\langle  \varepsilon_h^{\mathbb G} \bm n + \varepsilon_h^q\bm n, \bm P_M \bm v\rangle_{\partial \mathcal{T}_h} + \langle \widehat {\bm \delta}_2, \bm \Pi_{ V} \bm v \rangle_{\partial \mathcal T_h}+(\bm y-\bm y_h( u), \bm \Pi_{ V}  \bm v)_{\mathcal T_h}.
	\end{align*}
	Next, using $(\mathbb T_2,\bm v_2, w_2,\bm \mu_2) = (0, 0,0,\bm P_M\bm v )$ in \Cref{error_eq_Gqz} and $\bm v\in \bm H_0^1(\Omega)$ gives
	\begin{align*}
	\langle  \varepsilon_h^{\mathbb G} \bm n + \varepsilon_h^q\bm n, \bm P_M \bm v\rangle_{\partial \mathcal{T}_h} = \langle h^{-1}(\bm P_M\varepsilon_h^{\bm z}-\varepsilon_h^{\widehat{\bm z}}),\bm P_M\bm v\rangle_{\partial \mathcal{T}_h}-\langle \widehat{\bm \delta}_2, \bm P_M\bm v\rangle_{\partial \mathcal{T}_h}.
	\end{align*}
	This implies
	\begin{align*}
	(\varepsilon_h^q ,\nabla\cdot \bm v)_\Omega  & =  -(\varepsilon_h^{\mathbb G}, \nabla \bm v)_{\mathcal{T}_h}-\langle h^{-1}(\bm P_M\varepsilon_h^{\bm z}-\varepsilon_h^{\widehat{\bm z}}),\bm\Pi_{V}\bm v - \bm P_M \bm v \rangle_{\partial \mathcal{T}_h} \\
	&\quad+ \langle \widehat{\bm \delta}_2,\bm \Pi_V \bm v-  \bm P_M\bm v\rangle_{\partial \mathcal{T}_h}+(\bm y-\bm y_h( u), \bm \Pi_{ V}  \bm v)_{\mathcal T_h}.
	\end{align*}
	Applying the Cauchy-Schwarz inequality, we obtain
	\begin{align*}
	|	(\varepsilon_h^q ,\nabla\cdot \bm v)_\Omega| &\lesssim \|\varepsilon_h^{\mathbb G}\|_{\mathcal T_h}\|\nabla\bm v\|_{\mathcal T_h} + h^{-\frac 1 2} \|\bm P_M \varepsilon_h^{\bm z} - \varepsilon_h^{\widehat{\bm z}}\|_{\partial\mathcal T_h} \|\nabla \bm v\|_{\mathcal T_h} \\
	&\quad+ h^{\frac 1 2} \|\widehat{\bm \delta}_2\|_{\partial \mathcal T_h}\|\nabla \bm v\|_{\mathcal T_h} + \|\bm y-\bm y_h( u)\|_{\mathcal T_h} \|\bm v\|_{\mathcal T_h}.
	\end{align*}
	Since $\bm v\in \bm H_0^1(\Omega)$, the Poincar\'{e} inequality yields our final result.
\end{proof}

\Cref{lemma:step6_main_lemma}, \Cref{pressure_estimate_q_1}, and the triangle inequality give optimal convergence rates for $\|\mathbb G- \mathbb G_h( u)\|_{\mathcal T_h}$, $\| q - q_h( u)\|_{\mathcal T_h}$, and $\| \bm z - \bm z_h( u)\|_{\mathcal T_h}$:

	\begin{lemma}\label{lemma:step7_conv_rates}
		Let  $\mathcal M$ and $\mathcal N$  be defined as in \Cref{error_y_lyp} and \Cref{lemma:step6_main_lemma}, respectively. Then we have
		\begin{align*}
		\|\mathbb{G} -\mathbb{G}_h( u)\|_{\mathcal{T}_h}+\|q -q_h( u)\|_{\mathcal{T}_h}+	\|\bm z-\bm z_h( u)\|_{\mathcal{T}_h} \lesssim 	h\mathcal M + \mathcal N.
		\end{align*}	
	\end{lemma}

\subsubsection{Step 8: Estimate for $\|u-u_h\|_{\varepsilon_h^\partial}$ and $\norm {y-y_h}_{\mathcal T_h}$.}

Next, we consider the solution of the auxiliary problem and the solution of the HDG discretization of the optimality system \eqref{HDG_full_discrete}.  Our main result follows from bounding the errors between these solutions as well as \Cref{lemma:step4_conv_rates} and \Cref{lemma:step7_conv_rates}.

Define
\begin{align*}
\zeta_{\mathbb L}&=\mathbb L_h( u)-\mathbb L_h, & \zeta_{\bm y}&=\bm y_h( u)-\bm y_h, & \zeta_p&=p_h( u)- p_h,\\
\zeta_{\mathbb G}&=\mathbb G_h( u)-\mathbb G_h, & \zeta_{\bm z}&=\bm z_h( u)-\bm z_h, & \zeta_q&=q_h( u)- q_h,
\end{align*}
and
\begin{align*}
\zeta_{\widehat {\bm y}}&=\widehat{\bm y}^o_h( u)-\widehat{\bm y}_h^o \;\; \textup{on} \;\; \varepsilon_h^{o}, & \zeta_{\widehat {\bm y}} &=  P_M u \bm\tau -  u_h\bm \tau \;\; \textup{on} \;\; \varepsilon_h^{\partial} ,\\
\zeta_{\widehat {\bm z}}&=\widehat{\bm z}^o_h( u)-\widehat{\bm z}_h^o \;\; \textup{on} \;\; \varepsilon_h^{o}, & \zeta_{\widehat {\bm z}} &= 0\;\; \textup{on} \;\; \varepsilon_h^{\partial}.
\end{align*}
Subtracting the auxiliary problem \eqref{HDG_inter_u} and the HDG problem \eqref{HDG_full_discrete} yields the error equations
\begin{subequations}\label{eq_yh}
	\begin{align}
	\mathscr B(\zeta_{\mathbb L},\zeta_{\bm y},\zeta_p,\zeta_{\widehat{\bm y}};\mathbb T_1,\bm v_1, w_1,\bm \mu_1)&=\langle  (P_M  u - u_h)\bm\tau, h^{-1} \bm v_1+\mathbb T_1\bm n \rangle_{\varepsilon_h^\partial},\label{eq_yh1}\\
	\mathscr B(\zeta_{\mathbb G},\zeta_{\bm z},-\zeta_q,\zeta_{\widehat{\bm z}};\mathbb T_2,\bm v_2, w_2,\bm \mu_2)&=(\zeta_{\bm y},\bm v_2)_{\mathcal{T}_h},\label{eq_yh2}
	\end{align}
\end{subequations}
for all $\left(\mathbb T_1,\mathbb T_2, \bm v_1,\bm v_2, w_1,w_2,\bm\mu_1,\bm{\mu}_2\right)\in \mathbb K_h\times\mathbb K_h\times\bm{V}_h\times \bm{V}_h \times W_h^0 \times W_h^0\times \bm M_h(o)\times \bm M_h(o)$.

\begin{lemma}\label{lemma:estimate_for_u_uh}
	We have
	\begin{align}\label{eq_uuh_yhuyh}
	\begin{split}
	\gamma\| u-  u_h\|^2_{\varepsilon_h^\partial}+\|\zeta_{\bm y}\|^2_{\mathcal T_h} &= \langle \gamma  u\bm{\tau}-\mathbb G_h( u) \bm n+h^{-1}  \bm P_M\bm z_h( u), (u-  u_h)\bm{\tau}\rangle_{\varepsilon_h^\partial} \\
	& \quad- \langle \gamma  u_h \bm \tau- \mathbb G_h\bm n +h^{-1} \bm P_M\bm z_h,  (u-  u_h)\bm{\tau} \rangle_{\varepsilon_h^\partial}.
	\end{split}
	\end{align}
\end{lemma}
\begin{proof}
	First,
	\begin{align*}
	\langle &\gamma  u\bm \tau-\mathbb G_h( u) \bm n+h^{-1}  \bm P_M\bm z_h( u), (u- u_h)\bm{\tau}\rangle_{\varepsilon_h^\partial}- \langle \gamma  u_h \bm \tau- \mathbb G_h\bm n +h^{-1} \bm P_M\bm z_h, ( u- u_h)\bm{\tau}\rangle_{\varepsilon_h^\partial}\\
	&= \gamma\norm{ u- u_h}_{\varepsilon_h^\partial}^2 +\langle -\zeta_{\mathbb G} \bm n+ h^{-1}  \bm P_M \zeta_{\bm z}, (u-  u_h)\bm{\tau}\rangle_{\varepsilon_h^\partial}.
	\end{align*}
	Next, \Cref{identical_equa} gives
	\begin{align*}
	\mathscr B(\zeta_{\mathbb L},\zeta_{\bm y},\zeta_{p}, \zeta_{\widehat{\bm y}}; - \zeta_{\mathbb G},\zeta_{\bm z},\zeta_{q}, \zeta_{\widehat{\bm z}}) +\mathscr B ( \zeta_{\mathbb G},\zeta_{\bm z},-\zeta_{q}, \zeta_{\widehat{\bm z}}; \zeta_{\mathbb L},-\zeta_{\bm y},\zeta_{p},-\zeta_{\widehat{\bm y}})  = 0.
	\end{align*}
	However, taking $(\mathbb T_1,\bm v_1, w_1,\bm \mu_1) = (- \zeta_{\mathbb G},\zeta_{\bm z},\zeta_{q}, \zeta_{\widehat{\bm z}})$ and $(\mathbb T_2,\bm v_2, w_2,\bm \mu_2)=(\zeta_{\mathbb L},-\zeta_{\bm y},\zeta_{p},-\zeta_{\widehat{\bm y}})$ in the error equations \eqref{eq_yh}  yield
	\begin{align*}
	\hspace{3em}&\hspace{-3em} \mathscr B(\zeta_{\mathbb L},\zeta_{\bm y},\zeta_{p}, \zeta_{\widehat{\bm y}}; - \zeta_{\mathbb G},\zeta_{\bm z},\zeta_{q}, \zeta_{\widehat{\bm z}}) +\mathscr B ( \zeta_{\mathbb G},\zeta_{\bm z},-\zeta_{q}, \zeta_{\widehat{\bm z}}; \zeta_{\mathbb L},-\zeta_{\bm y},\zeta_{p},-\zeta_{\widehat{\bm y}})\\
	&=  -(\zeta_{ \bm y},\zeta_{ \bm y})_{\mathcal{T}_h} + \langle  P_M ( u- u_h) \bm \tau, -\zeta_{\mathbb G} \bm{n} + h^{-1} \zeta_{ \bm z}  \rangle_{{\varepsilon_h^{\partial}}}\\
	&= -(\zeta_{\bm y},\zeta_{ \bm y})_{\mathcal{T}_h} + \langle   (u- u_h) \bm \tau, -\zeta_{\mathbb G} \bm{n} + h^{-1} \bm P_M \zeta_{ \bm z}  \rangle_{{\varepsilon_h^{\partial}}}.
	\end{align*}
	
	Comparing these equalities gives
	\begin{align*}
	(\zeta_{\bm y},\zeta_{ \bm y})_{\mathcal{T}_h} =  \langle   (u- u_h) \bm \tau, -\zeta_{\mathbb G} \bm{n} + h^{-1} \bm P_M \zeta_{ \bm z}  \rangle_{{\varepsilon_h^{\partial}}}.
	\end{align*}
\end{proof}

	\begin{theorem}\label{error_u}
		Let  $\mathcal M$ and $\mathcal N$  be defined as in \Cref{error_y_lyp} and \Cref{lemma:step6_main_lemma}, respectively. Then we have
		\begin{align*}
		\norm{ u-  u_h}_{\varepsilon_h^\partial}+\norm {\bm y-\bm y_h}_{\mathcal T_h}	\lesssim 		h^{-\frac 1 2 }(h\mathcal M + \mathcal N).
		\end{align*}
	\end{theorem}

\begin{proof}
	The optimality condition \eqref{TCOOC} gives $\langle \gamma u\bm \tau-\mathbb G\bm n, (u-u_h)\bm \tau\rangle_{\varepsilon_h^\partial} = 0$.  Also,
	\begin{align*}
	\langle	\gamma  u_h\bm \tau - \mathbb G_h \bm n  + h^{-1} \bm P_M \bm z_h, (u-u_h)\bm \tau \rangle_{\varepsilon_h^\partial} = 	\langle	\gamma  u_h\bm \tau - \mathbb G_h \bm n + h^{-1} \bm P_M \bm z_h,  (P_M u- u_h) \bm \tau\rangle_{\varepsilon_h^\partial}=0,
	\end{align*}
	where we used the HDG optimality condition \eqref{HDG_discrete2_m} and \eqref{HDG_discrete2_q}.  %
	Using these equalities in \eqref{eq_uuh_yhuyh} from \Cref{lemma:estimate_for_u_uh} gives
	\begin{align*}
	\gamma\norm{ u- u_h}_{\varepsilon_h^\partial}^2  + \norm {\zeta_{\bm y}}_{\mathcal T_h}^2 &=  \langle \gamma  u \bm \tau-\mathbb G_h( u) \bm n+h^{-1}  \bm P_M\bm z_h( u), (u-  u_h)\bm \tau\rangle_{\varepsilon_h^\partial}\\
	&=\langle (\mathbb G -\mathbb G_h( u))\bm n + h^{-1} \bm P_M \bm z_h( u),  (u-  u_h)\bm \tau \rangle_{\varepsilon_h^\partial}.
	\end{align*}
	First, by the estimation of the standard $L^2$ projection in \eqref{standard_L2_projection} and the trace inequality  we have
	\begin{align}\label{reduce_convergence_rate}
		\begin{split}
			\|(\mathbb G -\mathbb G_h( u))\bm n\|_{\varepsilon_h^\partial}&\le \|\mathbb G -\mathbb G_h(u)\|_{\partial \mathcal T_h} \le  \|\mathbb G - \bm \Pi_{\mathbb K} \mathbb G\|_{\partial \mathcal T_h}  +  \|\bm \Pi_{\mathbb K} \mathbb G -\mathbb G_h( u) \|_{\partial \mathcal T_h} \\
			&\lesssim  h^{s_{\mathbb G}-\frac 1 2}\|\mathbb G\|_{s^{\mathbb G},\Omega} +  h^{-\frac 1 2}\|\bm \Pi_{\mathbb K} \mathbb G -\mathbb G_h( u) \|_{ \mathcal T_h}\\
			&\lesssim h^{s_{\mathbb G}-\frac 12 } \norm{\mathbb G}_{s^{\mathbb G},\Omega} + h^{-\frac 1 2}\norm {\varepsilon_h^{\mathbb G}}_{\mathcal T_h}.
		\end{split}
	\end{align}
	Next, since $\widehat {\bm z}_h( u) = \bm z = \bm 0$ on $\varepsilon_h^{\partial}$ we have
	\begin{align*}
 \|\bm P_M \bm z_h(u)\|_{\varepsilon_h^\partial}&=\|\bm P_M \bm z_h(u)  - \bm P_M \bm \Pi_V \bm z + \bm P_M \bm \Pi_V \bm z - \bm P_M  \bm z  + \bm P_M \bm z- \widehat {\bm z}_h( u) \|_{\varepsilon_h^\partial}  \\
	&\leq  \|\bm P_M \varepsilon_h^{\bm z} -\varepsilon_h^{\widehat {\bm z}}\|_{\partial\mathcal T_h} + \|\bm{\Pi}_V \bm z - \bm z\|_{\varepsilon_h^\partial}.
	\end{align*}
	This yields
	\begin{align*}
	\norm{ u- u_h}_{\varepsilon_h^\partial}  + \|\zeta_{\bm y}\|_{\mathcal T_h}\lesssim h^{-\frac 1 2}\norm {\varepsilon_h^{\mathbb G}}_{\mathcal T_h} +h^{s_{\mathbb G}-\frac 12 } \norm{\mathbb G}_{s^{\mathbb G},\Omega}  +h^{-1}\|\bm P_M \varepsilon_h^{\bm z} -\varepsilon_h^{\widehat {\bm z}}\|_{\partial\mathcal T_h} + h^{-1}\|\bm{\Pi}_V \bm z - \bm z\|_{\varepsilon_h^\partial}.
	\end{align*}
	\Cref{lemma:step6_main_lemma} and properties of the $ L^2 $ projection give the desired result.
\end{proof}

\begin{remark}\label{Remark4.1}The application of a trace theorem to estimate the normal derivatives of the adjoint state in \eqref{reduce_convergence_rate} yields suboptimal results only. In the case of the Poisson equation, and using standard finite element methods, optimal error estimates for the so called {\em variational} normal derivative can be found in \cite{MR4000217,MR3319574,Winkler_29}. However, proving sharp convergence rates for normal derivatives requires regularity results in weighted $W^{k,\infty}(\Omega)$ spaces and non-standard duality arguments. Unfortunately, establishing sharp convergence rates for normal derivatives using the HDG methods is unclear to us; we leave this interesting project to be explored in future work.	
\end{remark}

\subsubsection{Step 9: Estimates for $\|\mathbb G-\mathbb G_h\|_{\mathcal T_h}$ and $\|z-z_h\|_{\mathcal T_h}$.}

	\begin{lemma}
		Let  $\mathcal M$ and $\mathcal N$ be defined as in \Cref{error_y_lyp} and \Cref{lemma:step6_main_lemma}, respectively. Then we have
		\begin{align*}
		\norm {\zeta_{\mathbb G}}_{\mathcal T_h} +
		\|\zeta_{\bm z}\|_{\mathcal T_h}\lesssim 	h^{-\frac 1 2 }(h\mathcal M + \mathcal N).
		\end{align*}
	\end{lemma}
	\begin{proof}
		Using the energy identity for $ \mathscr B $ as in \Cref{energy_norm}, the error equation  \eqref{eq_yh2}, $\zeta_{ \widehat {\bm z}} = 0$ on $\varepsilon_h^{\partial}$, the discrete Poincar\'{e} inequality in \Cref{lemma:discr_Poincare_ineq}, and \Cref{grad_y_h} gives
		\begin{align*}
		\hspace{2em}&\hspace{-2em} \mathscr B ( \zeta_{\mathbb G},\zeta_{\bm z},-\zeta_{q}, \zeta_{\widehat{\bm z}}; \zeta_{\mathbb G},\zeta_{\bm z},-\zeta_{q}, \zeta_{\widehat{\bm z}}) \\
		&=(\zeta_{\mathbb G},\zeta_{\mathbb G})_{{\mathcal{T}_h}}+h^{-1}  \|\bm P_M \zeta_{\bm z} -\zeta_{\widehat {\bm z}}\|_{\partial\mathcal T_h}^2 \\
		&=(\zeta_{\bm y},\zeta_{\bm z})_{\mathcal T_h}\\
		&\le \norm{\zeta_{\bm y}}_{\mathcal T_h} \norm{\zeta_{\bm z}}_{\mathcal T_h}\\
		&\lesssim \norm{\zeta_{\bm y}}_{\mathcal T_h} (\|\nabla \zeta_{\bm z}\|_{\mathcal T_h} + h^{-\frac 1 2} \| \zeta_{\bm z} - \zeta_{\widehat {\bm z}}\|_{\partial\mathcal T_h}) \\
		&\lesssim \norm{\zeta_{\bm y}}_{\mathcal T_h} (\|\zeta_{\mathbb G}\|_{\mathcal T_h} + h^{-\frac 1 2} \|\bm P_M \zeta_{\bm z} - \zeta_{\widehat {\bm z}}\|_{\partial\mathcal T_h}).
		\end{align*}
		This implies
		\begin{align*}
		\norm {\zeta_{\mathbb G}}_{\mathcal T_h} +h^{-\frac1 2}\|\bm P_M\zeta_{\bm z}-\zeta_{\widehat {\bm z}}\|_{\partial\mathcal T_h}\lesssim  		h^{-\frac 1 2 }(h\mathcal M + \mathcal N).
		\end{align*}
		Using the discrete Poincar\'{e} inequality again yields
		\begin{align*}
		\|\zeta_{\bm z}\|_{\mathcal T_h} & \lesssim \|\nabla \zeta_{\bm z}\|_{\mathcal T_h} + h^{-\frac 1 2} \|\zeta_{\bm z} - \zeta_{\widehat {\bm{z}}}\|_{\partial\mathcal T_h}\\
		&\lesssim \|\zeta_{\mathbb G}\|_{\mathcal T_h} + h^{-\frac 1 2} \|\bm P_M \zeta_{\bm z} - \zeta_{\widehat {\bm z}}\|_{\partial\mathcal T_h}\\
		&\lesssim  h^{-\frac 1 2 }(h\mathcal M + \mathcal N).
		\end{align*}
		This finishes the proof.
	\end{proof}
	
	The above lemma, the triangle inequality, \Cref{lemma:step4_conv_rates,lemma:step7_conv_rates} give the following result:
	\begin{theorem}
		Let  $\mathcal M$ and $\mathcal N$ be defined in \Cref{error_y_lyp,lemma:step6_main_lemma}, respectively. Then we have
		\begin{align*}
		\norm {\mathbb G - \mathbb G_h}_{\mathcal T_h}  +
		\norm {\bm z - \bm z_h}_{\mathcal T_h}  \lesssim 	h^{-\frac 1 2 }(h\mathcal M + \mathcal N).
		\end{align*}
	\end{theorem}
	
	\subsubsection{Step 10: Estimate for $\|q-q_h\|_{\mathcal T_h}$.}
	
	\begin{lemma}\label{pressure_q1}
		Let  $\mathcal M$ and $\mathcal N$ be defined in \Cref{error_y_lyp,lemma:step6_main_lemma}, respectively. Then we have
		\begin{align*}
		\|{\zeta_q}\|_\Omega  \lesssim 	h^{-\frac 1 2 }(h\mathcal M + \mathcal N).
		\end{align*}
	\end{lemma}

\begin{proof}
	By the same argument as in \Cref{pressure_estimate_p_1}, we have
	\begin{align*}
	\| \zeta_q\|_\Omega \lesssim \sup_{\bm v\in {\bm H}_0^1 (\Omega)\backslash\{0\}} \frac{(\zeta_q,\nabla\cdot \bm v)_{\Omega}}{\norm{\bm v} _{\bm H ^1 (\Omega)}},
	\end{align*}
	and
	\begin{align*}
	(\zeta_q,\nabla\cdot \bm v)_\Omega =-(\nabla \zeta_q ,\bm \Pi_{ V} \bm v)_{\mathcal{T}_h}+\langle \zeta_q, \bm v\cdot \bm n\rangle_{\partial \mathcal{T}_h}.
	\end{align*}
	Next, taking  $(\mathbb T_2,\bm v_2, w_2,\bm \mu_2) = (0,\bm \Pi_{V} \bm v,0,0)$ in the error equation \eqref{eq_yh2}, using \eqref{def_B2} and $\zeta_{\widehat {\bm z}} = 0$ on $\varepsilon_h^\partial$ give
	\begin{align*}
	(\nabla \zeta_q,\bm \Pi_{ V} \bm v)_{\mathcal{T}_h} = - (\nabla\cdot\zeta_{\mathbb G}, \bm \Pi_{ V} \bm v)_{\mathcal{T}_h} +\langle h^{-1}(\bm P_M\zeta_{\bm z}-\zeta_{\widehat{\bm z}}),\bm\Pi_{V}\bm v\rangle_{\partial \mathcal{T}_h} - (\zeta_{\bm y},\bm\Pi_V \bm v)_{\mathcal{T}_h}.
	\end{align*}
	The above two equalities give
	\begin{align*}
	(\zeta_q ,\nabla\cdot \bm v)_\Omega  &=  (\nabla\cdot\zeta_{\mathbb G}, \bm \Pi_{ V} \bm v)_{\mathcal{T}_h}-\langle h^{-1}(\bm P_M\zeta_{\bm z}-\zeta_{\widehat{\bm z}}),\bm\Pi_{V}\bm v\rangle_{\partial \mathcal{T}_h} +\langle\zeta_q, \bm v\cdot \bm n\rangle_{\partial \mathcal{T}_h} + (\zeta_{\bm y},\bm\Pi_V \bm v)_{\mathcal{T}_h}\\
	&=  (\nabla\cdot\zeta_{\mathbb G}, \bm v)_{\mathcal{T}_h}-\langle h^{-1}(\bm P_M\zeta_{\bm z}-\zeta_{\widehat{\bm z}}),\bm\Pi_{V}\bm v\rangle_{\partial \mathcal{T}_h} +\langle\zeta_q, \bm v\cdot \bm n\rangle_{\partial \mathcal{T}_h}+ (\zeta_{\bm y},\bm\Pi_V \bm v)_{\mathcal{T}_h}\\
	&= - (\zeta_{\mathbb G}, \nabla \bm v)_{\mathcal{T}_h}-\langle h^{-1}(\bm P_M\zeta_{\bm z}-\zeta_{\widehat{\bm z}}),\bm\Pi_{V}\bm v\rangle_{\partial \mathcal{T}_h}+\langle  \zeta_{\mathbb G} \bm n + \zeta_q\bm n, \bm P_M \bm v\rangle_{\partial \mathcal{T}_h\backslash\varepsilon_h^\partial} + (\zeta_{\bm y}, \bm v)_{\mathcal{T}_h}.
	\end{align*}
	Next, take $(\mathbb T_2,\bm v_2, w_2,\bm \mu_2) = (0, 0,0,\bm P_M\bm v )$ in \eqref{eq_yh2} and use $\bm v\in \bm H_0^1(\Omega)$ to obtain
	\begin{align*}
	\langle  \zeta_{\mathbb G} \bm n +\zeta_q\bm n, \bm P_M \bm v\rangle_{\partial \mathcal{T}_h\backslash\varepsilon_h^\partial} = \langle h^{-1}(\bm P_M\zeta_{\bm z}-\zeta_{\widehat{\bm z}}),\bm P_M\bm v\rangle_{\partial \mathcal{T}_h}.
	\end{align*}
	This implies
	\begin{align*}
	(\zeta_q ,\nabla\cdot \bm v)_\Omega  &= - (\zeta_{\mathbb G}, \nabla \bm v)_{\mathcal{T}_h}-\langle h^{-1}(\bm P_M\zeta_{\bm z}-\zeta_{\widehat{\bm z}}),\bm\Pi_{V}\bm v - \bm P_M \bm v \rangle_{\partial \mathcal{T}_h}+ (\zeta_{\bm y}, \bm v)_{\mathcal{T}_h},
	\end{align*}
	and therefore
	\begin{align*}
 |	(\zeta_q ,\nabla\cdot \bm v)_\Omega|\lesssim \|\zeta_{\mathbb G}\|_{\mathcal T_h}\|\nabla\bm v\|_{\mathcal T_h} + h^{-\frac 1 2} \|\bm P_M \zeta_{\bm z} - \zeta_{\widehat{\bm z}}\|_{\partial\mathcal T_h} \|\nabla \bm v\|_{\mathcal T_h} + \|\zeta_{\bm y}\|_{\mathcal T_h} \| \bm v\|_{\mathcal T_h}.
	\end{align*}
	Since  $\bm v\in \bm H_0^1(\Omega)$, the Poincar\'{e} inequality gives the desired result.
\end{proof}

The above lemma, the triangle inequality, and \Cref{lemma:step7_conv_rates} give the following error bound:

	\begin{theorem}
		Let  $\mathcal M$ and $\mathcal N$ be defined as in \Cref{error_y_lyp,lemma:step6_main_lemma}, respectively. Then we have
		\begin{align*}
		\|q - q_h\|_{\mathcal T_h}  \lesssim 	h^{-\frac 1 2 }(h\mathcal M + \mathcal N).
		\end{align*}
	\end{theorem}

\subsubsection{Step 11: Estimates for $\|p - p_h\|_{\mathcal T_h}$ and $\|\mathbb L - \mathbb L_h\|_{\mathcal T_h}$.}

\begin{lemma}\label{pressure_p1}
	For $k\ge 1$, we have
	\begin{equation*}
	\|{\zeta_p}\|_\Omega  \lesssim \|\zeta_{\mathbb L}\|_{\mathcal T_h}+ h^{-\frac 1 2} \|\bm P_M \zeta_{\bm y} - \zeta_{\widehat{\bm y}}\|_{\partial\mathcal T_h\backslash\varepsilon_h^\partial}+ h^{-\frac 1 2} \|\bm P_M \zeta_{\bm y}\|_{\varepsilon_h^\partial}  + h^{-\frac 1 2} \| P_M  u -  u_h \|_{\varepsilon_h^\partial}.
	\end{equation*}
\end{lemma}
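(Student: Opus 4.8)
The plan is to estimate $\|\zeta_p\|_\Omega$ by the same inf-sup strategy used for the auxiliary/exact pressure in \Cref{pressure_estimate_p_1} and for $\zeta_q$ in \Cref{pressure_q1}, now applied to the difference between the auxiliary problem \eqref{HDG_u_a} and the HDG optimality system. Since $p_h(u)$ and $p_h$ both lie in $W_h^0$, we have $(\zeta_p,1)_{\mathcal T_h}=0$, so $\vartheta=\zeta_p$ is admissible in \eqref{infsup} and it suffices to bound $\sup_{\bm v\in\bm H_0^1(\Omega)\backslash\{0\}}(\zeta_p,\nabla\cdot\bm v)_\Omega/\|\bm v\|_{\bm H^1(\Omega)}$. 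Element-wise integration by parts gives $(\zeta_p,\nabla\cdot\bm v)_\Omega=-(\nabla\zeta_p,\bm\Pi_V\bm v)_{\mathcal T_h}+\langle\zeta_p,\bm v\cdot\bm n\rangle_{\partial\mathcal T_h}$, where $\nabla\zeta_p\in\bm V_h$ lets us insert $\bm\Pi_V\bm v$ in place of $\bm v$.

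First I would test the error equation \eqref{eq_yh1} with $(\mathbb T_1,\bm v_1,w_1,\bm\mu_1)=(0,\bm\Pi_V\bm v,0,0)$; after inserting the definition of $\mathscr B$ and integrating by parts, this expresses $(\nabla\zeta_p,\bm\Pi_V\bm v)_{\mathcal T_h}$ through $\nabla\cdot\zeta_{\mathbb L}$ (which lies in $\bm V_h$, so $\bm\Pi_V\bm v$ may be replaced by $\bm v$), the interior jump $h^{-1}(\bm P_M\zeta_{\bm y}-\zeta_{\widehat{\bm y}})$, the boundary quantity $h^{-1}\bm P_M\zeta_{\bm y}$ on $\varepsilon_h^\partial$, and the right-hand side control term $\langle(P_M u-u_h)\bm\tau,h^{-1}\bm\Pi_V\bm v\rangle_{\varepsilon_h^\partial}$. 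Substituting back and using that $\zeta_{\mathbb L}\bm n$ and $\zeta_p\bm n$ belong to $\bm M_h$ on each face (so $\bm v$ may be exchanged for $\bm P_M\bm v$ in the face integrals) reduces the inter-element contribution to $\langle\zeta_{\mathbb L}\bm n-\zeta_p\bm n,\bm P_M\bm v\rangle_{\partial\mathcal T_h\backslash\varepsilon_h^\partial}$, which I would then cancel against the identity obtained by testing \eqref{eq_yh1} with $(0,0,0,\bm P_M\bm v)$ — legitimate because $\bm P_M\bm v\in\bm M_h(o)$, since $\bm v=\bm 0$ on $\Gamma$. What survives in $(\zeta_p,\nabla\cdot\bm v)_\Omega$ is $(\zeta_{\mathbb L},\nabla\bm v)_{\mathcal T_h}$, the interior jump paired against $\bm\Pi_V\bm v-\bm P_M\bm v$ on $\partial\mathcal T_h\backslash\varepsilon_h^\partial$, and two boundary integrals over $\varepsilon_h^\partial$ carrying $\bm P_M\zeta_{\bm y}$ and $P_M u-u_h$. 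Applying Cauchy-Schwarz together with the trace and approximation bounds $\|\bm\Pi_V\bm v-\bm P_M\bm v\|_{\partial\mathcal T_h}\lesssim h^{1/2}\|\bm v\|_{\bm H^1(\Omega)}$ and the Poincaré inequality on $\bm H_0^1(\Omega)$ then yields the claimed estimate.

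The step I expect to be the main obstacle is the bookkeeping on the boundary faces $\varepsilon_h^\partial$. In contrast to \Cref{pressure_q1}, where $\zeta_{\widehat{\bm z}}=\bm 0$ on $\varepsilon_h^\partial$ collapses the boundary contribution entirely, here $\zeta_{\widehat{\bm y}}=(P_M u-u_h)\bm\tau\neq\bm 0$ on $\varepsilon_h^\partial$, while the admissible multiplier $\bm P_M\bm v$ vanishes there. Consequently the boundary terms do not cancel and must be retained separately, which is precisely why the estimate splits into the interior norm $\|\bm P_M\zeta_{\bm y}-\zeta_{\widehat{\bm y}}\|_{\partial\mathcal T_h\backslash\varepsilon_h^\partial}$ together with the two boundary norms $\|\bm P_M\zeta_{\bm y}\|_{\varepsilon_h^\partial}$ and $\|P_M u-u_h\|_{\varepsilon_h^\partial}$. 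The control contribution is controlled using $\|\bm\Pi_V\bm v\|_{\varepsilon_h^\partial}=\|\bm\Pi_V\bm v-\bm v\|_{\varepsilon_h^\partial}\lesssim h^{1/2}\|\bm v\|_{\bm H^1(\Omega)}$, which against the factor $h^{-1}$ produces the $h^{-1/2}\|P_M u-u_h\|_{\varepsilon_h^\partial}$ term; the same trick converts the $\varepsilon_h^\partial$ integral carrying $\bm P_M\zeta_{\bm y}$ into $h^{-1/2}\|\bm P_M\zeta_{\bm y}\|_{\varepsilon_h^\partial}$. The hypothesis $k\ge 1$ enters, as in the companion pressure estimate, to justify the polynomial inclusions and face projection identities used when passing between $\bm v$ and its projections $\bm\Pi_V\bm v$, $\bm P_M\bm v$.
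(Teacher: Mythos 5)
Your proposal is correct and follows essentially the same route as the paper's own proof: the inf-sup bound \eqref{infsup} applied to $\zeta_p \in W_h^0$, testing the error equation \eqref{eq_yh1} first with $(0,\bm\Pi_V\bm v,0,0)$ and then with $(0,0,0,\bm P_M\bm v)$ to cancel the interior-face terms, and converting the surviving boundary integrals via $\bm\Pi_V\bm v = \bm\Pi_V\bm v - \bm v$ on $\varepsilon_h^\partial$ together with the $O(h^{1/2})$ trace/approximation bounds and Cauchy--Schwarz. Your diagnosis of the key difference from \Cref{pressure_q1} --- that $\zeta_{\widehat{\bm y}} = (P_M u - u_h)\bm\tau \neq \bm 0$ on $\varepsilon_h^\partial$ while the admissible multiplier vanishes there, so the boundary terms survive and produce the two extra $h^{-1/2}$-weighted norms --- is exactly the mechanism at work in the paper.
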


\begin{proof}
	As in the proof of \Cref{pressure_estimate_p_1}, we have
	\begin{align*}
	\| \zeta_p\|_\Omega \lesssim \sup_{\bm v\in {\bm H}_0^1 (\Omega)\backslash\{0\}} \frac{(\zeta_p,\nabla\cdot \bm v)_{\Omega}}{\norm{\bm v} _{\bm H ^1 (\Omega)}},
	\end{align*}
	and
	\begin{align*}
	(\zeta_p,\nabla\cdot \bm v)_\Omega =-(\nabla \zeta_p ,\bm \Pi_{ V} \bm v)_{\mathcal{T}_h}+\langle \zeta_p, \bm v\cdot \bm n\rangle_{\partial \mathcal{T}_h}.
	\end{align*}
	Use $(\mathbb T_1,\bm v_1, w_1,\bm \mu_1) = (0,\bm \Pi_{V} \bm v,0,0)$ in \eqref{eq_yh1} and \eqref{def_B2}, and $\bm v\in \bm H_0^1(\Omega)$ to obtain
	\begin{align*}
	(\nabla \zeta_p,\bm \Pi_{ V} \bm v)_{\mathcal{T}_h} &=  (\nabla\cdot\zeta_{\mathbb L}, \bm \Pi_{ V} \bm v)_{\mathcal{T}_h} - \langle h^{-1}(\bm P_M\zeta_{\bm y}-\zeta_{\widehat{\bm y}}),\bm\Pi_{V}\bm v\rangle_{\partial \mathcal{T}_h\backslash\varepsilon_h^\partial}\\
	&\quad - \langle h^{-1} \bm P_M\zeta_{\bm y},\bm\Pi_{V}\bm v\rangle_{\varepsilon_h^\partial} +  \langle  (P_M  u -  u_h)\bm \tau, h^{-1} \bm\Pi_{V}\bm v\rangle_{\varepsilon_h^\partial}.
	\end{align*}
	This gives
	\begin{align*}
	(\zeta_p ,\nabla\cdot \bm v)_\Omega&= - (\nabla\cdot\zeta_{\mathbb L}, \bm \Pi_{ V} \bm v)_{\mathcal{T}_h}+\langle h^{-1}(\bm P_M\zeta_{\bm y}-\zeta_{\widehat{\bm y}}),\bm\Pi_{V}\bm v\rangle_{\partial \mathcal{T}_h\backslash\varepsilon_h^\partial}+\langle\zeta_p, \bm v\cdot \bm n\rangle_{\partial \mathcal{T}_h} \\
	&\quad + \langle h^{-1} \bm P_M\zeta_{\bm y},\bm\Pi_{V}\bm v\rangle_{\varepsilon_h^\partial} - \langle (P_M  u -  u_h)\bm \tau, h^{-1} \bm\Pi_{V}\bm v\rangle_{\varepsilon_h^\partial}\\
	&= - (\nabla\cdot\zeta_{\mathbb L}, \bm v)_{\mathcal{T}_h}+\langle h^{-1}(\bm P_M\zeta_{\bm y}-\zeta_{\widehat{\bm y}}),\bm\Pi_{V}\bm v\rangle_{\partial \mathcal{T}_h\backslash\varepsilon_h^\partial}+\langle\zeta_p, \bm v\cdot \bm n\rangle_{\partial \mathcal{T}_h}\\
	&\quad + \langle h^{-1} \bm P_M\zeta_{\bm y},\bm\Pi_{V}\bm v\rangle_{\varepsilon_h^\partial} - \langle (P_M  u -  u_h)\bm \tau, h^{-1} \bm\Pi_{V}\bm v\rangle_{\varepsilon_h^\partial}\\
	&=  (\zeta_{\mathbb L}, \nabla \bm v)_{\mathcal{T}_h}+\langle h^{-1}(\bm P_M\zeta_{\bm y}-\zeta_{\widehat{\bm y}}),\bm\Pi_{V}\bm v\rangle_{\partial \mathcal{T}_h\backslash\varepsilon_h^\partial}+ \langle h^{-1} \bm P_M\zeta_{\bm y},\bm\Pi_{V}\bm v\rangle_{\varepsilon_h^\partial} \\
	&\quad +\langle  -\zeta_{\mathbb L} \bm n + \zeta_p\bm n, \bm P_M \bm v\rangle_{\partial \mathcal{T}_h\backslash\varepsilon_h^\partial} - \langle (P_M  u -  u_h)\bm \tau, h^{-1} \bm\Pi_{V}\bm v\rangle_{\varepsilon_h^\partial}.
	\end{align*}
	Next, take $(\mathbb T_1,\bm v_1, w_1,\bm \mu_1) = (0, 0,0,\bm P_M\bm v )$ in \eqref{eq_yh1}  and use $\bm v\in \bm H_0^1(\Omega)$ to give
	\begin{align*}
	\langle  \zeta_{\mathbb L} \bm n - \zeta_p\bm n, \bm P_M \bm v\rangle_{\partial \mathcal{T}_h\backslash\varepsilon_h^\partial} = \langle h^{-1}(\bm P_M\zeta_{\bm y}-\zeta_{\widehat{\bm y}}),\bm P_M\bm v\rangle_{\partial \mathcal{T}_h\backslash\varepsilon_h^\partial}.
	\end{align*}
	This implies
	\begin{align*}
	(\zeta_p ,\nabla\cdot \bm v)_\Omega  &=  (\zeta_{\mathbb L}, \nabla \bm v)_{\mathcal{T}_h}+\langle h^{-1}(\bm P_M\zeta_{\bm y}-\zeta_{\widehat{\bm y}}),\bm\Pi_{V}\bm v - \bm P_M \bm v \rangle_{\partial \mathcal{T}_h\backslash\varepsilon_h^\partial}\\
	&\quad + \langle h^{-1} \bm P_M\zeta_{\bm y},\bm\Pi_{V}\bm v\rangle_{\varepsilon_h^\partial} - \langle (P_M  u -  u_h)\bm \tau, h^{-1} \bm\Pi_{V}\bm v\rangle_{\varepsilon_h^\partial}\\
	&=  (\zeta_{\mathbb L}, \nabla \bm v)_{\mathcal{T}_h}+\langle h^{-1}(\bm P_M\zeta_{\bm y}-\zeta_{\widehat{\bm y}}),\bm\Pi_{V}\bm v - \bm P_M \bm v \rangle_{\partial \mathcal{T}_h\backslash\varepsilon_h^\partial}\\
	&\quad + h^{-1}\langle  \bm P_M\zeta_{\bm y},\bm\Pi_{V}\bm v - \bm v\rangle_{\varepsilon_h^\partial} - h^{-1}\langle (P_M  u -  u_h)\bm \tau, \bm\Pi_{V}\bm v - \bm v \rangle_{\varepsilon_h^\partial}.
	\end{align*}
	We obtain
	\begin{align*}
	|	(\zeta_p ,\nabla\cdot \bm v)_\Omega| &\lesssim \|\zeta_{\mathbb L}\|_{\mathcal T_h}\|\nabla\bm v\|_{\mathcal T_h} + h^{-\frac 1 2} \|\bm P_M \zeta_{\bm y} - \zeta_{\widehat{\bm y}}\|_{\partial\mathcal T_h\backslash\varepsilon_h^\partial} \|\nabla \bm v\|_{\mathcal T_h}\\
	&\quad + h^{-\frac 1 2} \|\bm P_M \zeta_{\bm y}\|_{\varepsilon_h^\partial}\|\nabla\bm v\|_{\mathcal T_h}  + h^{-\frac 1 2} \|P_M  u -  u_h \|_{\varepsilon_h^\partial}\|\nabla\bm v\|_{\mathcal T_h},
	\end{align*}
	and the result follows.
\end{proof}

	\begin{lemma}
		Let  $\mathcal M$ and $\mathcal N$ be defined as  in \Cref{error_y_lyp,lemma:step6_main_lemma}, respectively. If  $k\geq 1$ holds, then
		\begin{align*}
		\norm {\zeta_{\mathbb L}}_{\mathcal T_h} + \norm {\zeta_{p}}_{\mathcal T_h} &\lesssim \mathcal M + h^{-1}\mathcal N.
		\end{align*}
	\end{lemma}
	
	\begin{proof}
		By \Cref{energy_norm} and the error equation \eqref{eq_yh1}, we have
		\begin{align*}
		\hspace{2em}&\hspace{-2em}\mathscr B ( \zeta_{\mathbb L},\zeta_{\bm y},\zeta_{p}, \zeta_{\widehat{\bm y}}; \zeta_{\mathbb L},\zeta_{\bm y},\zeta_{p}, \zeta_{\widehat{\bm y}}) \\
		&=(\zeta_{\mathbb L}, \zeta_{\mathbb L})_{{\mathcal{T}_h}}+\langle (h^{-1} (\bm P_M \zeta_{\bm y}-\zeta_{\widehat {\bm y}}) , \zeta_{\bm y}-\zeta_{\widehat {\bm y}} \rangle_{\partial{{\mathcal{T}_h}}\backslash\varepsilon_h^\partial}+ \langle h^{-1}\bm P_M \zeta_{\bm y}, \bm P_M\zeta_{\bm y} \rangle_{\varepsilon_h^\partial}\\
		&= \langle (P_M  u -  u_h)\bm{\tau}, \zeta_{\mathbb L}\cdot \bm{n} + h^{-1} \zeta_{\bm y} \rangle_{{\varepsilon_h^{\partial}}}\\
		& =\langle   (u-  u_h)\bm \tau, \zeta_{\mathbb L}\cdot \bm{n} + h^{-1} \bm P_M \zeta_{\bm y} \rangle_{{\varepsilon_h^{\partial}}}\\
		&	\lesssim \norm { u- u_h}_{\varepsilon_h^{\partial}} (\norm {\zeta_{\mathbb L}}_{\varepsilon_h^{\partial}} + h^{-1} \norm {\bm P_M \zeta_{\bm y}}_{\varepsilon_h^{\partial}})\\
		&	\lesssim h^{-\frac 1 2}\norm { u- u_h}_{\varepsilon_h^{\partial}} (\norm {\zeta_{\mathbb L}}_{\mathcal T_h} + h^{-\frac 1 2} \norm {\bm P_M \zeta_{\bm y}}_{\varepsilon_h^{\partial}}),
		\end{align*}
		which gives
		\begin{align*}
		\hspace{4em}&\hspace{-4em}\norm {\zeta_{\mathbb L}}_{\mathcal T_h} + h^{-\frac 1 2} \|\bm P_M \zeta_{\bm y} - \zeta_{\widehat{\bm y}}\|_{\partial\mathcal T_h\backslash\varepsilon_h^\partial} + h^{-\frac 1 2} \|\bm P_M \zeta_{\bm y}\|_{\varepsilon_h^\partial} \lesssim \mathcal M + h^{-1}\mathcal N.
		\end{align*}
		This bound together with \Cref{pressure_p1} gives  the final result.
	\end{proof}

	The above lemma, the triangle inequality, and \Cref{lemma:step4_conv_rates} complete the proof of the following main result:
	\begin{theorem}
		Let  $\mathcal M$ and $\mathcal N$ be defined in \Cref{error_y_lyp,lemma:step6_main_lemma}, respectively. If  $k\geq 1$ holds, then
		\begin{align*}
		\|p-p_h\|_{\mathcal{T}_h} +\norm {\mathbb L - \mathbb L_h}_{\mathcal T_h} \lesssim  \mathcal M + h^{-1}\mathcal N.
		\end{align*}
	\end{theorem}

\section{Numerical Experiments}
\label{sec:numerics}



In order to emphasize the dependance of the order of convergence with respect to the regularity of the optimal control, we consider one example in a square domain and another one in a non-square domain.

\begin{example}\label{example2}
	In this example, we consider the domain $\Omega = [0,1/8]\times [0,1/8]$ and set $\bm f=\bm 0$,  $\gamma = 1$.  
	We take as target state the large vortex described in \cite{MR3683678}
	\begin{align*}
	\bm y_d = 200\times 8^3[x_1^2(1-8x_1)^2x_2(1-8x_2)(1-16x_2); - x_1(1-8x_1)(1-16x_1)x_2^2(1-8x_2)^2].
	\end{align*}
For illustration, we show in \Cref{fig:direc_field_1} a plot of $\bm y_d$ rescaled to $[0,1]\times [0,1]$  (left), and the computed optimal control on a relatively coarse mesh, i.e., $h = \sqrt{2}/64$ (right).
\begin{figure}
	\centerline{
		\hbox{\includegraphics[width=0.56\textwidth]{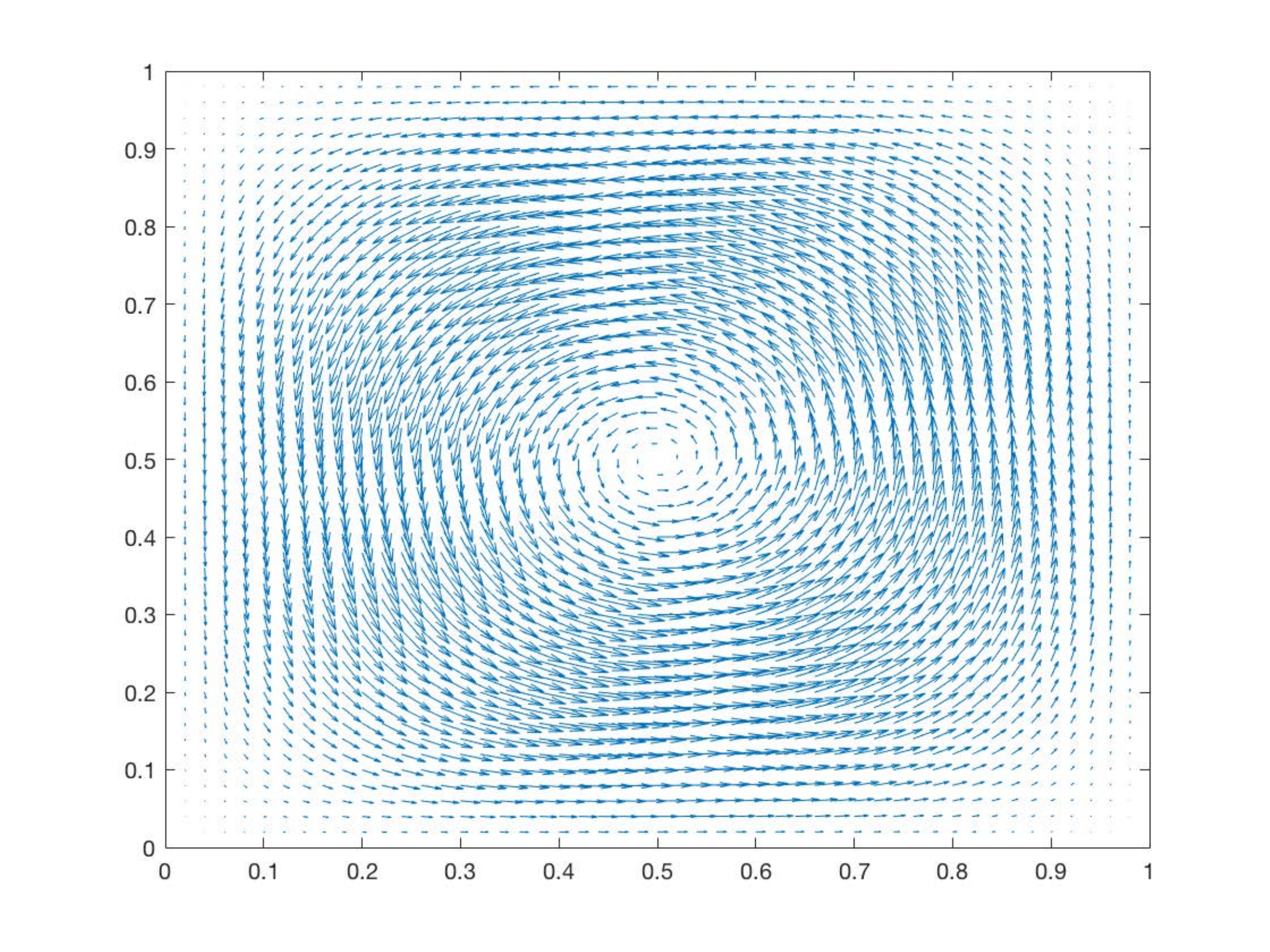}}
		\hbox{\includegraphics[width=0.44\textwidth]{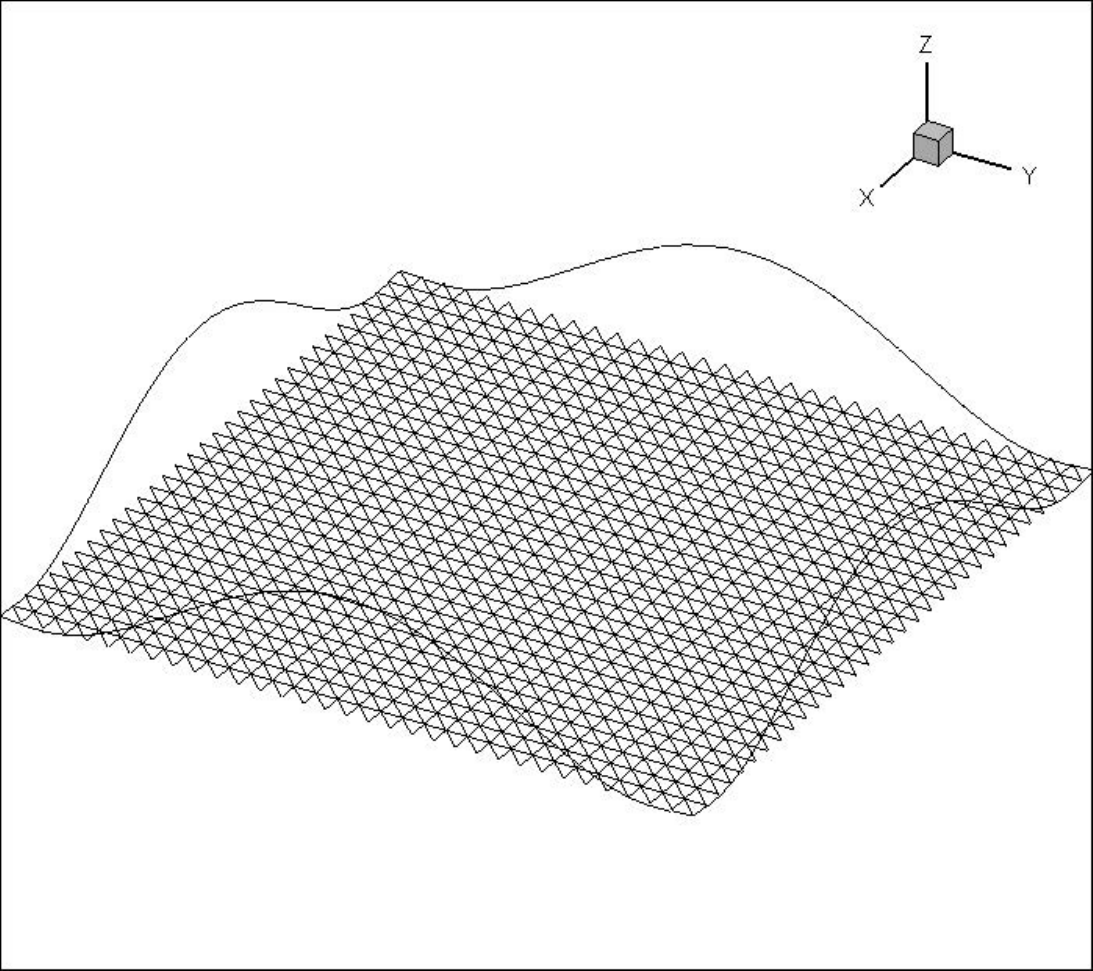}}
	}
	\caption{\Cref{example2}: Left is the  desired state $ \bm y_d $ and right is the Dirichlet boundary control $u$.}
	\label{fig:direc_field_1}
	\centering
\end{figure}

	
For a square domain, the singular exponent is $ \xi \approx 2.74 $ (cf. \cite{MR977489}). \Cref{cor:main_result} yields directly an order of convergence for the control variable of $0.5$ for $k=0$ and (almost) $1.5$ for $k=1$.

Nevertheless, for this example, we notice in \Cref{fig:direc_field_1} that not only is the optimal control equal to zero on the corners, but also the derivative of the control is zero.  If we assume the derivative of the optimal control is exactly zero at the corners, we can improve
  the global regularity $u\in H^{s}(\Gamma)$ for all $s<3/2$ given in \eqref{opt_control_global_reg} using the local regularity $ u \in H^t(\Gamma_i) $ for all $t<2.24$ given in \eqref{eqn:opt_control_local_reg}.
  Since the derivative of $u$ is zero at the corners, we have that $ u \in H^t(\Gamma) $ for all $t<2.24$.
  The same argument can be used to improve the global regularity of the Dirichlet data $u\bm \tau\in\bm V^s(\Gamma)$ given in \eqref{DirichletDataRegularity} to obtain $u\bm \tau\in\bm V^t(\Gamma)$ for all $t<2.24$.
   This leads to a higher regularity of the optimal state:
   although  \Cref{T2.1} is not directly applicable, since each component of $u\bm \tau$ has zero {\em derivative} at the corners, there is an extra compatibility condition satisfied and we can apply the trace theorem \cite[Theorem 1.5.2.8]{PGrisvard_book_1} to obtain $r_{\bm y}\approx 2.74$.
   For the other variables now we have $r_{\bm z}\approx 3.74$, $r_{\mathbb L} = r_p = r_{\mathbb G}-1=r_q-1 \approx 1.74$. With this regularity, a direct application of  \Cref{main_res} yields also an order of convergence for the control variable of $2.24$ for $k=2$.

The numerical results are shown in \Cref{table_3} for $ k = 0 $, \Cref{table_4} for $ k = 1 $ and  \Cref{table_5} for $ k = 2 $.
	Since we do not have an explicit expression for the exact solution, we solved the problem numerically for a triangulation with $524288$ elements, i.e., $h = \sqrt{2}/2^{12}$,  and compared this reference solution against other solutions computed on meshes with larger $h$.  	

	\begin{table}
		\begin{center}
			\begin{tabular}{cccccc|c}
				\hline
				${h}/{\sqrt 2}$ &1/16& 1/32&1/64 &1/128 & 1/256 &\textup{EO} \\
				\hline
				$\|\mathbb G - \mathbb G_h\|_{\mathcal T_h}$&1.66E-03   &8.76E-04   &4.62E-04   &2.38E-04   &1.20E-04
				\\
				order&-&  0.92   &0.92   &0.96   &0.99 & 0.5\\
				\hline
				$\|\bm y - \bm y_h\|_{\mathcal T_h}$ &6.20E-04   &3.27E-04   &1.74E-04   &6.83E-05   &2.04E-05\\
				order&-& 0.93   &0.91   &1.35   &1.75 & 0.5\\
				\hline
				$\|\bm z- \bm z_h\|_{\mathcal T_h}$&6.89E-05   &1.83E-05   &4.85E-06   &1.26E-06   &3.21E-07 \\
				order&-& 1.90& 1.92   &1.94  &1.98& 0.5\\
				\hline
				$\norm{{q}-{q}_h}_{\mathcal T_h}$&5.61E-04   &3.72E-04   &1.92E-04   &9.31E-05   &4.52E-05 \\
				order&-& 0.59   &0.96   &1.04   &1.04& 0.5\\
				\hline
				$\norm{{ u}-{ u}_h}_{\varepsilon_h^\partial}$&6.34E-03   &3.13E-03   &1.67E-03   &7.88E-04   &3.73E-04 \\
				order&-& 1.02  &0.910  &1.08   &1.08 &0.5\\
				\hline
			\end{tabular}
		\end{center}
		\caption{\Cref{example2}, $k=0$: Errors, observed convergence orders, and expected order (EO) for the control $u$, pressure $p$,  dual pressure $q$, state $\bm y$, adjoint state $\bm z$,  and their fluxes $\mathbb L$ and $\mathbb G$.}\label{table_3}
	\end{table}
	\begin{table}
		\begin{center}
			\begin{tabular}{cccccc|c}
				\hline
				${h}/{\sqrt 2}$ &1/16& 1/32&1/64 &1/128 & 1/256    &\textup{EO} \\
				\hline
				$\|\mathbb L - \mathbb L_h\|_{\mathcal T_h}$&3.38E-02   &1.61E-02   &6.51E-03   &2.23E-03   &7.35E-04\\
				order&-&1.07   &1.31   &1.55   &1.60&1\\
				\hline
				$\|\mathbb G - \mathbb G_h\|_{\mathcal T_h}$&6.65E-04   &2.02E-04   &5.90E-05   &1.58E-05   &4.06E-06
				\\
				order&-&  1.72   &1.77   &1.90   &1.96 &1.5 \\
				\hline
				$\|\bm y - \bm y_h\|_{\mathcal T_h}$ &2.74E-04   &8.39E-05   &1.72E-05   &2.67E-06   &4.16E-07\\
				order&-& 1.71   &2.28   &2.69   &2.68   &1.5\\
				\hline
				$\|\bm z- \bm z_h\|_{\mathcal T_h}$&1.82E-05   &2.86E-06   &3.78E-07   &4.80E-08   &6.03E-09 \\
				order&-& 2.67   &2.92   &2.98   &2.99 &1.5\\
				\hline
				$\norm{{p}-{p}_h}_{\mathcal T_h}$&2.86E-02   &1.20E-02   &4.40E-03   &1.28E-03   &3.69E-04 \\
				order&-& 1.25   &1.45   &1.78   &1.79  &1\\
				\hline
				$\norm{{q}-{q}_h}_{\mathcal T_h}$&2.74E-04   &8.70E-05   &2.54E-05   &6.67E-06   &1.67E-06 \\
				order&-&  1.66   &1.77   &1.93   &2.00  &1.5\\
				\hline
				$\norm{{ u}-{ u}_h}_{\varepsilon_h^\partial}$& 2.13E-03  & 8.60E-04   &2.54E-04   &7.02E-05   &1.90E-05\\
				order&-& 1.31   &1.76   &1.86   &1.89  &1.5\\
				\hline
			\end{tabular}
		\end{center}
		\caption{\Cref{example2}, $k=1$: Errors, observed convergence orders, and expected order (EO) for the control $u$, pressure $p$,  dual pressure $q$, state $\bm y$, adjoint state $\bm z$,  and their fluxes $\mathbb L$ and $\mathbb G$.}\label{table_4}
	\end{table}
	\begin{table}
		\begin{center}
			\begin{tabular}{cccccc|c}
				\hline
				${h}/{\sqrt 2}$ &1/16& 1/32&1/64 &1/128 & 1/256    &\textup{EO}\\
				\hline
				$\|\mathbb L - \mathbb L_h\|_{\mathcal T_h}$& 1.87E-02   &6.42E-03   &1.82E-03   &5.08E-04   &1.53E-04\\
				order&-&  1.54  &1.81   &1.84   &1.72& 1.74\\
				\hline
				$\|\mathbb G - \mathbb G_h\|_{\mathcal T_h}$&1.96E-04   &4.76E-05   &8.02E-06   &1.18E-06   &1.69E-07
				\\
				order&-& 2.04   &2.57   &2.76  &2.80 &2.24\\
				\hline
				$\|\bm y - \bm y_h\|_{\mathcal T_h}$ &1.07E-04   &1.79E-05   &2.56E-06   &3.38E-07   &5.26E-08\\
				order&-&  2.58   &2.80   &2.92  &2.68   &2.24\\
				\hline
				$\|\bm z- \bm z_h\|_{\mathcal T_h}$&6.93E-06   &5.63E-07   &3.83E-08   &2.47E-09   &1.57E-10 \\
				order&-&  3.62&  3.87&  3.95&   3.97&2.24\\
				\hline
				$\norm{{p}-{p}_h}_{\mathcal T_h}$&1.49E-02   &4.80E-03   &1.29E-03   &3.58E-04   &1.14E-04\\
				order&-& 1.64&  1.88&   1.85&  1.64  &1.74\\
				\hline
				$\norm{{q}-{q}_h}_{\mathcal T_h}$&9.49E-05   &2.32E-05   &4.36E-06   &6.39E-07   &9.30E-08 \\
				order&-&    2.02&   2.41&  2.77&  2.78&  2.24\\
				\hline
				$\norm{{ u}-{ u}_h}_{\varepsilon_h^\partial}$& 1.05E-03   &2.63E-04   &5.57E-05   &1.19E-05   &2.55E-06\\
				order&-&   2.00&   2.23&  2.23  &2.22  &2.24\\
				\hline
			\end{tabular}
		\end{center}
		\caption{\Cref{example2}, $k=2$: Errors, observed convergence orders, and expected order (EO) for the control $u$, pressure $p$,  dual pressure $q$, state $\bm y$, adjoint state $\bm z$,  and their fluxes $\mathbb L$ and $\mathbb G$.}\label{table_5}
	\end{table}
\end{example}

\begin{example}\label{example3}
	In this example, we choose the same data as in \Cref{example2}, but the domain is the convex hull of the points $\{(0,0), (\sqrt{3}/8,0), (\sqrt{3}/4,1/8), (0,1/8)\}$; see \Cref{fig:direc_field_2}. Since the largest angle is $5\pi/6$, then \eqref{singular_ex} yields $\xi\approx 1.53$. \Cref{cor:main_result} yields an order of convergence $0.5$ for $k=0$ and approximately $1.03$ for $k=1$. In this case, we cannot improve the global regularity  $u\in H^s(\Gamma)$ for all $s<1.03$. For illustration, we plot the computed optimal control in \Cref{fig:direc_field_2}.
\begin{figure}
	\centerline{
		\hbox{\includegraphics[width=5in]{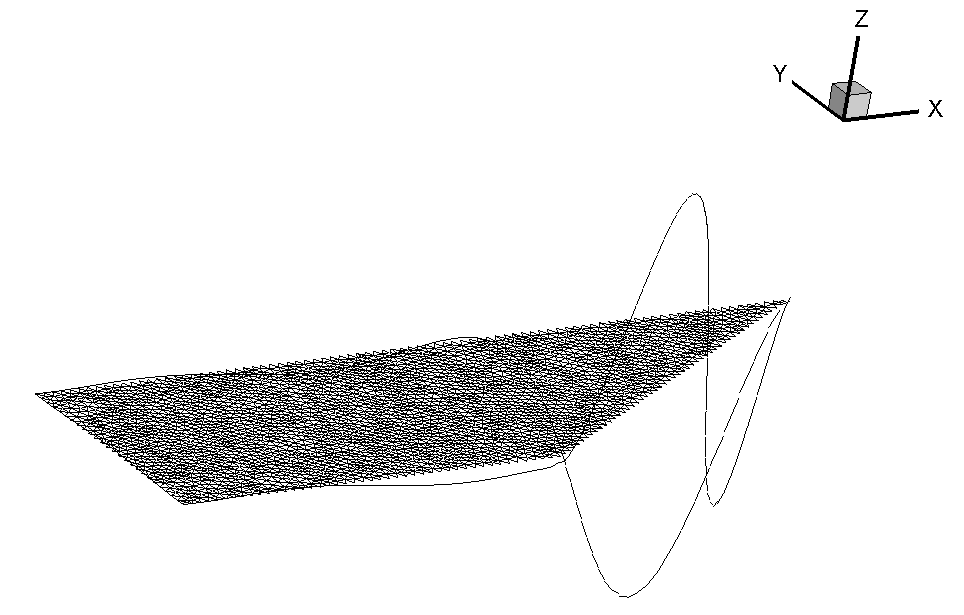}}
	}
	\caption{\Cref{example3}: The computed Dirichlet boundary control $u$.}
	\label{fig:direc_field_2}
	\centering
\end{figure}

The numerical results are shown in \Cref{table_6} for $ k = 0 $ and  \Cref{table_7} for $ k = 1$.
	Since we do not have an explicit expression for the exact solution, we solved the problem numerically for a triangulation with $327680$ elements, i.e., $h =  5.9146\times 10^{-4}$,  and compared this reference solution against other solutions computed on meshes with larger $h$.  As in the previous example, the experimental orders of convergence are higher than the expected orders.

	\begin{table}
		\begin{center}
			\begin{tabular}{cccccc|c}
				\hline
				${h}/{\sqrt 2}$ &1/16& 1/32&1/64 &1/128 & 1/256 &\textup{EO} \\
				\hline
				$\|\mathbb G - \mathbb G_h\|_{\mathcal T_h}$&3.76E-01   &2.97E-01   &2.31E-01   &1.51E-01   &8.53E-02
				\\
				order&-& 0.34   &0.35   &0.61   &0.82 & 0.5\\
				\hline
				$\|\bm y - \bm y_h\|_{\mathcal T_h}$ & 3.71E-01   &2.49E-01   &1.39E-01   &6.01E-02   &2.16E-02\\
				order&-& 0.57   &0.83   &1.21 &1.47& 0.5\\
				\hline
				$\|\bm z- \bm z_h\|_{\mathcal T_h}$& 2.94E-02   &1.00E-02   &3.85E-03   &1.35E-03   &4.01E-04 \\
				order&-&  1.55  &1.37   &1.51   &1.75& 0.5\\
				\hline
				$\norm{{q}-{q}_h}_{\mathcal T_h}$&
				1.44   &1.07   &7.48E-01   &3.86E-01   &1.52E-01 \\
				order&-& 0.41&   0.52   &0.95   &1.34& 0.5\\
				\hline
				$\norm{{ u}-{ u}_h}_{\varepsilon_h^\partial}$&3.12 &2.57   &1.84   &1.01   &4.18E-01 \\
				order&-& 0.27   &0.48  &0.86   &1.27 &0.5\\
				\hline
			\end{tabular}
		\end{center}
		\caption{\Cref{example3}, $k=0$: Errors, observed convergence orders, and expected order (EO) for the control $u$, pressure $p$,  dual pressure $q$, state $\bm y$, adjoint state $\bm z$,  and their fluxes $\mathbb L$ and $\mathbb G$.}\label{table_6}
	\end{table}
	\begin{table}
		\begin{center}
			\begin{tabular}{cccccc|c}
				\hline
				${h}/{\sqrt 2}$ &1/16& 1/32&1/64 &1/128 & 1/256    &\textup{EO} \\
				\hline
				$\|\mathbb L - \mathbb L_h\|_{\mathcal T_h}$&5.67 &  2.99  &1.67   &9.14E-01   &4.12E-01\\
				order&-&0.92   &0.84  &0.86&  1.14&0.53\\
				\hline
				$\|\mathbb G - \mathbb G_h\|_{\mathcal T_h}$&1.02E-01   &5.26E-02   &2.40E-02   &8.46E-03   &2.37E-03
				\\
				order&-&  0.95 &1.13   &1.50   &1.83 &1.03 \\
				\hline
				$\|\bm y - \bm y_h\|_{\mathcal T_h}$ &6.60E-02   &1.71E-02   &4.48E-03   &1.31E-03   &3.09E-04\\
				order&-& 1.94  &1.93   &1.76   &2.09   &1.03\\
				\hline
				$\|\bm z- \bm z_h\|_{\mathcal T_h}$&7.56E-03   &1.28E-03   &2.15E-04   &3.42E-05   &4.67E-06 \\
				order&-& 2.55&   2.57&   2.65&   2.87 &1.03\\
				\hline
				$\norm{{p}-{p}_h}_{\mathcal T_h}$& 1.82E+01   &3.97E+00   &1.75E+0   &9.77E-01   &3.99E-01 \\
				order&-& 2.20   &1.18  &0.84   &1.28  &0.53\\
				\hline
				$\norm{{q}-{q}_h}_{\mathcal T_h}$& 2.58E-01   &5.62E-02   &1.60E-02   &4.83E-03   &1.29E-03 \\
				order&-& 2.19   &1.80   &1.73   &1.89 &1.03\\
				\hline
				$\norm{{ u}-{ u}_h}_{\varepsilon_h^\partial}$& 7.12E-01   &2.55E-01   &1.12E-01   &4.86E-02   &1.58E-02\\
				order&-& 1.48  &1.17   &1.21   &1.61&1.03\\
				\hline
			\end{tabular}
		\end{center}
		\caption{\Cref{example3}, $k=1$: Errors, observed convergence orders, and expected order (EO) for the control $u$, pressure $p$,  dual pressure $q$, state $\bm y$, adjoint state $\bm z$,  and their fluxes $\mathbb L$ and $\mathbb G$.}\label{table_7}
	\end{table}
\end{example}

As noticed in \Cref{Remark4.1}, experimental orders of convergence for the control variable are about 0.5 higher than predicted by  \Cref{main_res}. The obtention of higher experimental orders of convergence than the ones predicted by the theory in the control variable is common in numerical experiments for Dirichlet control problems; see e.g. \cite{MR2272157,MR2837508,MR3070527,MR3465458,MR3641789,MR3456959}. In the case of Poisson equation and using standard Lagrange $\mathcal P_1$ elements, it has taken several years to get optimal error estimates; see
\cite{MR2558321} for smooth domains or \cite{ApelMateosPfeffererRosch17,Apel2019} for polygonal domains. One of the key points for this improvement is the use of weighted $W^{k,\infty}(\Omega)$ norms for the adjoint state and not-standard duality arguments to obtain estimates for the variational normal derivative; see also \cite{MR4000217,MR3319574,Winkler_29}.
We are not aware of any similar study for HDG discretization.

For the state and the adjoint state, the experimental orders are much higher, as often happens in numerical experiments for control problems; see \cite{MR2114385} for an approach to this problem using superconvergence properties of the optimal controls at the barycenters of the elements or \cite{MR3070527} for a duality approach, using improved error convergence rates in Sobolev norms of negative exponent.

\section{Conclusion}

In this work, we considered a tangential Dirichlet boundary control problem for the Stokes equations. First, we established well-posedness and regularity results for the optimal control problem based on a weak mixed formulation of the PDE on polygonal domains. Next, we used an existing superconvergent HDG method to approximate the solution of the optimality system and established optimal convergence rates for the control under certain assumptions on the domain $\Omega$ and the desired state $\bm y_d$. However, the numerical experiments show higher convergence rates than our theoretical results; this may be due to the higher local regularity of the control on individual edges of the domain.  This phenomenon is not present in Dirichlet boundary control problems for the Poisson equation.

As far as we are aware, this is the first work to explore the analysis of this tangential Dirichlet control problem of Stokes equations and the numerical analysis of a computational method for this problem. There are a number of topics that can be explored in the future, including using standard conforming finite elements for this problem, using an energy space for the control (see \cite{MR3317816,MR3614013,MR3824871} for the Poisson equation), devising divergence free and pressure robust HDG schemes, and considering more complicated PDEs, such as the Oseen and Navier-Stokes equations.




\end{document}